\newtheorem{thm}{Theorem}[section]
\newtheorem{cor}[thm]{Corollary}
\newtheorem{lem}[thm]{Lemma}
\numberwithin{equation}{section}
\definecolor{darkslategray}{rgb}{0.18, 0.31, 0.31}
\definecolor{warmblack}{rgb}{0.0, 0.26, 0.26}
\definecolor{astral}{RGB}{46,116,181}
\journal{....................... }
\begin{document}
	\begin{frontmatter}
		\title{ \textcolor{warmblack}{\bf Further results on $\mathbb{A}$-numerical radius inequalities  }}

	\author[label1]{Nirmal Chandra Rout}\ead{nrout89@gmail.com}
		
		\author[label1]{Debasisha Mishra\corref{cor2}}\ead{dmishra@nitrr.ac.in}
		
		\address[label1]{Department of Mathematics, National Institute of Technology Raipur, Raipur-492010, India}
		\cortext[cor2]{Corresponding author}

		\begin{abstract}
			\textcolor{warmblack}{
			Let  $A$ be a  bounded  linear positive operator on a complex Hilbert space $\mathcal{H}.$ Further, let $\mathcal{B}_A\mathcal{(H)}$ denote the set of all bounded linear operators on $\mathcal{H}$ whose $A$-adjoint exists, and
            $\mathbb{A}$ signify a diagonal  operator matrix with diagonal entries are $A.$ 
Very recently, several $A$-numerical radius inequalities of $2\times 2 $ operator matrices were established by Feki and Sahoo [arXiv:2006.09312; 2020]  and  Bhunia {\it et al.} [Linear Multilinear Algebra (2020), DOI: 10.1080/03081087.2020.1781037],  assuming the conditions ``$\mathcal{N}(A)^\perp$ is  invariant under  different operators in $\mathcal{B}_A(\mathcal{H})$'' and ``$A$ is strictly positive'', respectively. 
             In this paper, we prove a few new $\mathbb{A}$-numerical radius inequalities for  $2\times 2$ and $n\times n$ operator matrices. We also provide some new proofs of the existing results by relaxing different sufficient conditions like  ``$\mathcal{N}(A)^\perp$ is invariant under different operators'' and ``$A$ is strictly positive''. Our proofs show the importance of the theory of the Moore-Penrose inverse of a bounded linear  operator in this field of study.     			  }
		\end{abstract}
		
		\begin{keyword}
			$A$-numerical radius; Moore-Penrose inverse; Positive operator; Semi-inner product; Inequality; Operator matrix
		\end{keyword}
 	\end{frontmatter}

	\section{Introduction }\label{intro}
Throughout  $\mathcal{H}$ denotes  a complex Hilbert space with inner product $\langle \cdot,\cdot\rangle$. By $\mathcal{B}(\mathcal{H})$, we mean  the $C^*$-algebra of all   bounded linear operators on $\mathcal{H}$.
	Let $\|\cdot\|$ be the norm induced from $\langle \cdot,\cdot\rangle.$
For $A\in\mathcal{B}(\mathcal{H})$,	$\mathcal{R}(A)$ stands for the range space of $A$ and $\overline{\mathcal{R}(A)}$ for the norm closure of $\mathcal{R}(A)$ in $\mathcal{H}$. And  $A^{*}$ represents the adjoint of $A$.
An operator $A\in\mathcal{B}(\mathcal{H})$ is called {\it selfadjoint} if $A=A^{*}$. A selfadjoint operator $A\in\mathcal{B}(\mathcal{H})$ is called {\it positive} if $\langle Ax, x\rangle \geq 0$ for all $x \in \mathcal{H}$, and is called {\it strictly positive} if  $\langle Ax, x\rangle > 0$ for all non-zero $x\in \mathcal{H}$. If $A$ is a positive (strictly positive) operator,  then we use the notation $A \geq 0$ ($A > 0$).    Let $\mathbb{A}$ be an $n\times n$ diagonal operator matrix whose diagonal entries are positive operator $A$ for $n=1,2,...$. Then $\mathbb{A} \in\mathcal{B}(\bigoplus_{i=1}^n \mathcal{H})$ and $\mathbb{A}\geq 0$. If $A\geq 0$, then it induces a positive semidefinite sesquilinear form, $\langle \cdot,\cdot\rangle_A: \mathcal{H}\times\mathcal{H}\rightarrow\mathbb{C}$ defined by $\langle x, y \rangle_A=\langle Ax,y\rangle,$ $x,y\in\mathcal{H}.$ 
 Let $\|\cdot\|_A$ denote the seminorm on $\mathcal{H}$ induced by $\langle \cdot, \cdot \rangle_A,$ i.e., $\|x\|_A=\sqrt{\langle x, x \rangle_A}$ for all $x \in \mathcal{H}.$ Then $\|x\|_A$ is a norm
if, and only if, $A>0$. Also, $(\mathcal{H}, \|\cdot\|_A)$ is complete if, and only if,  $\mathcal{R}(A)$ is closed in $\mathcal{H}.$ Henceforth, we use the symbol $A$ and $\mathbb{A}$ for positive operators on $\mathcal{H}$ and $\bigoplus_{i=1}^n \mathcal{H}$, respectively. We  retain the notation $O$ and $I$ for the null operator  and the identity operator on $\mathcal{H}$, respectively. Given $T \in\mathcal{B}\mathcal{(H)}$, 
 the $A$-operator seminorm  $\|T\|_A$ 
 is defined as follows:
$$\|T\|_A=\sup_{x\in \overline{\mathcal{R}(A)},~x\neq 0}\frac{\|Tx\|_A}{\|x\|_A}=\inf\left\{c>0: \|Tx\|_A\leq c\|x\|_A, 0\neq x\in \overline{\mathcal{R}(A)}\right\}<\infty.$$
We set $\mathcal{B}^A\mathcal{(H)}=\{T\in \mathcal{B(H)}:\|T\|_A<\infty\}.$ Then $\mathcal{B}^A\mathcal{(H)}$ is not a subalgebra of $\mathcal{B(H)}$.  It is pertinent to point out that $\|T\|_A=0$ if, and only if, $ATA=O.$ For $T\in\mathcal{B}^A\mathcal{(H)},$ we have 
$$\|T\|_A=\sup \{|\langle Tx,y\rangle_A|: x,y\in \overline{\mathcal{R}(A),} ~\|x\|_A=\|y\|_A=1\}.$$
If $AT\geq 0$, then the operator $T$ is called {\it $A$-positive}.  Note that if $T$ is $A$-positive, then 
$$\|T\|_A=\sup \{\langle Tx,x\rangle_A: x\in \mathcal{H},  \|x\|_A=1\}.$$
Before we proceed further, it is necessary to introduce the concept of $A$-adjoint operator.  We say an operator $X\in \mathcal{B(H)}$ to be {\it $A$-adjoint operator} of $T\in \mathcal{B(H)}$ if  $\langle Tx,y\rangle_A=\langle x, Xy\rangle_A$  for every $x,y\in \mathcal{H},$ i.e., $AX=T^*A.$
By Douglas Theorem \cite{Doug}, the existence of an $A$-adjoint operator is not guaranteed.  An operator $T\in \mathcal{B(H)}$ may admit none, one or many $A$-adjoints.  A rather well known result states that $A$-adjoint  of an operator $T\in \mathcal{B}\mathcal{(H)}$ exists if, and only if, $\mathcal{R}(T^*A)\subseteq \mathcal{R}(A)$. Let us now denote  $\mathcal{B}_A\mathcal{(H)}=\{T\in \mathcal{B(H)}:\mathcal{R}(T^*A)\subseteq \mathcal{R}(A)\}.$ Note that $\mathcal{B}_A\mathcal{(H)}$ is a subalgebra of $\mathcal{B(H)}$ which is neither closed nor dense in $\mathcal{B(H)}.$ Moreover, we have the following inclusion relations: $$\mathcal{B}_A\mathcal{(H)}\subseteq \mathcal{B}^A\mathcal{(H)}\subseteq\mathcal{B}\mathcal{(H)}. $$ And the equality holds if, $A$ is injective and has a closed range.

For  $T\in \mathcal{B(H)}$, $w_A(T)$, the {\it $A$-numerical radius} of $T$ was proposed by Saddi \cite{Saddi}. And is defined as follows:
\begin{equation}\label{eqn1a}
    w_A(T)=\sup\{|\langle Tx,x\rangle_A|:x\in \mathcal{H}, \|x\|_A=1\}.
\end{equation}
Very recently, Zamani \cite{Zam} obtained the following  $A$-numerical radius inequality for $T\in \mathcal{B}_A(\mathcal{H})$:
\begin{align}\label{ineq1}
    \frac{1}{2}\|T\|_A\leq w_A(T)\leq \|T\|_A.
\end{align}
The first inequality in \eqref{ineq1} becomes an equality if $T^2= O$ and the second inequality becomes an equality if $T$ is $A$-selfadjoint.
The $A$-Crawford number of $T\in\mathcal{B}_A(\mathcal{H})$ is  defined  as
$$c_A(T)=\inf\{|\langle Tx,x\rangle_A|:x\in\mathcal{H},\|x\|_A=1\}.$$
This terminology was introduced by Zamani \cite{Zam}.
Furthermore,  if $T$ is $A$-selfadjoint, then $w_A(T)=\|T\|_A$.
    Moslehian {\it et al.} \cite{MOS}  continued the study of $A$-numerical radius and obtained some new $A$-numerical radius inequalities.
   In this year, Bhunia {\it et al.}  \cite{PINTU, PINTU1} presented several $\mathbb{A}$-numerical radius inequalities for a strictly positive operator $A$. Feki  \cite{Feki_some}, and Feki and Sahoo \cite{FekSat} established some more $A$-numerical radius inequalities under the assumption ``$\mathcal{N}(A)^\perp$ is invariant under  different operators''. We refer the interested reader to \cite{ Feki, NSD} and the references cited therein for 
   further generalizations and refinements of $A$-numerical radius inequalities.
   
The objective of this paper is 
to present a few new $\mathbb{A}$-numerical radius inequalities for  $2\times 2$ and $n\times n$ operator matrices. Besides these, we aim to establish some existing $\mathbb{A}$-numerical radius inequalities by relaxing sufficient conditions like  $A>0$ and $\mathcal{N}(A)^\perp$ is invariant under different operators in $\mathcal{B}_A(\mathcal{H}).$
To this end, the paper is sectioned as follows.  In Section 2, we define additional mathematical constructs including
the definition of the Moore-Penrose inverse of an operator, $A$-adjoint, $A$-selfadjoint and $A$-unitary operator, that are required to state and prove the results in the subsequent sections.
Section 3 contains several new $A$-numerical radius inequalities. More interestingly,  it also provides new proof to the very recent existing results in the literature on $A$-numerical radius inequalities by dropping some sufficient conditions. 
     
 \section{Preliminaries}
This section gathers a few more definitions and results that are useful in proving our main results.  It starts with the definition of the Moore-Penrose inverse of a bounded operator $A$ in $H$. 
The {\it Moore-Penrose inverse} of $A\in \mathcal{B(H)}$ \cite{Nashed} is the
operator $X:R(A)\bigoplus R(A)^{\perp} \longrightarrow \mathcal{H}$ which satisfies the following four equations:
\begin{center}
(1) $AXA = A$,~ (2) $XAX = X$,~ (3) $X A= P_{N(A)^{\perp}}$,~~(4) $A X=P_{\overline{\mathcal{R}(A)}}|_{R(A)\bigoplus R(A)^{\perp}}$.
\end{center}
Here $N(A)$  and  $P_L$ denote the null space of $A$ and the orthogonal projection onto $L$, respectively.
The Moore-Penrose inverse
is unique, and  is denoted by $A^\dagger.$ In general, $A^{\dagger}\notin \mathcal{B(H)}$. It is bounded if and only if  $\mathcal{R}(A)$  is closed. 
 If $A\in \mathcal{B(H)}$ is invertible, then $A^\dagger= A^{-1}.$ 
If $T\in \mathcal{B}_A\mathcal{(H)},$ the reduced solution of the equation $AX=T^*A$ is a distinguished $A$-adjoint operator of $T,$ which is denoted by $T^{\#_A}$ (see \cite{ARIS2,Mos}). Note that $T^{\#_A}=A^\dagger T^* A$. If $T\in \mathcal{B}_A(\mathcal{H}),$ then $AT^{\#_A}=T^*A$, $\mathcal{R}(T^{\#_A})\subseteq \overline{\mathcal{R}(A)}$ and $\mathcal{N}(T^{\#_A})=\mathcal{N}(T^*A)$ (see \cite{Doug}). 
One can observe that
\begin{align}\label{eqn00_1.7}
    I^{\#_A}=A^{\dagger}I^*A=A^{\dagger}A=P_{\overline{\mathcal{R}(A)}}~~(\because ~\mathcal{N}(A)^\perp=\mathcal{R}(A^*)). 
\end{align}
Besides, we derive below two new properties of $A$-adjoint of an operator $T\in \mathcal{B}_A(\mathcal{H}),$ which are crucial in providing some new proofs of the existing results and in proving new results on $A$-numerical radius inequalities. 
\begin{equation}\label{eqn00_1.8}
    T^{\#_A}P_{\overline{\mathcal{R}(A)}}=A^\dagger T^*AA^\dagger A=A^\dagger T^*A=T^{\#_A},
\end{equation}
and
\begin{equation}\label{eqn00_1.9}
    P_{\overline{\mathcal{R}(A)}}T^{\#_A}=A^\dagger A A^\dagger T^*A=A^\dagger T^*A=T^{\#_A}.
\end{equation}
An operator $T\in \mathcal{B(H)}$ is said to be {\it $A$-selfadjoint} if $AT$ is selfadjoint, i.e., $AT=T^*A.$ Observe that if $T$ is $A$-selfadjoint, then $T\in \mathcal{B}_A(\mathcal{H}).$  However,  in general, $T\neq T^{\#_A}.$ But, $T=T^{\#_A}$  if and only if $T$ is $A$-selfadjoint and $\mathcal{R}(T)\subseteq \overline{\mathcal{R}(A)}.$ If $T\in \mathcal{B}_A(\mathcal{H}),$ then $T^{\#_A}\in \mathcal{B}_A(\mathcal{H}),$  $(T^{\#_A})^{\#_A}=P_{\overline{\mathcal{R}(A)}}TP_{\overline{\mathcal{R}(A)}},$  and $\left((T^{\#_A})^{\#_A}\right)^{\#_A}=T^{\#_A}.$ Also, $T^{\#_A}T$ and $TT^{\#_A}$ are  $A$-positive operators, and
\begin{align}\label{ineq0}
    \|T^{\#_A}T\|_A=\|TT^{\#_A}\|_A=\|T\|_A^2=\|T^{\#_A}\|_A^2.
\end{align}
For any $T_1,T_2\in \mathcal{B}_A(\mathcal{H}),$ we have
\begin{align}
    \|T_1^{\#_A}T_2\|_A&=\sup\{|\langle T_1^{\#_A}T_2 x,y\rangle|:x,y\in\mathcal{H}, ~\|x\|_A=\|y\|_A=1\}\nonumber\\
    &=\sup\{|\langle T_2 x,T_1y\rangle|:x,y\in\mathcal{H}, ~\|x\|_A=\|y\|_A=1\}\nonumber\\
    &=\sup\{|\langle  x,T_2^{\#_A}T_1y\rangle|:x,y\in\mathcal{H}, ~\|x\|_A=\|y\|_A=1\}\nonumber\\
    &=\sup\{|\langle  T_2^{\#_A}T_1 y,x\rangle|:x,y\in\mathcal{H}, ~\|x\|_A=\|y\|_A=1\}\nonumber\\
    &=\|T_2^{\#_A}T_1\|_A.
\end{align}
This fact is  same as Lemma 2.8 of \cite{FekSat}. However, the above proof is a very simple one and directly follows using the definition of $A$-norm. An operator $U\in \mathcal{B}_A(\mathcal{H})$ is said to be {\it $A$-unitary} if $\|Ux\|_A=\|U^{\#_A}x\|_A=\|x\|_A$ for all $x\in \mathcal{H}.$ If $T\in \mathcal{B}_A(\mathcal{H})$ and $U$ is $A$-unitary, then $w_A(U^{\#_A}TU)=w_A(T).$
For $T,S\in \mathcal{B}_A(\mathcal{H}),$ we have $(TS)^{\#_A}=S^{\#_A}T^{\#_A},$  $(T+S)^{\#_A}=T^{\#_A}+S^{\#_A},$ $\|TS\|_A\leq \|T\|_A\|S\|_A$ and $\|Tx\|_A\leq \|T\|_A\|x\|_A$ for all $x\in \mathcal{H}.$  The real and imaginary part of an operator $T\in \mathcal{B}_A(\mathcal{H})$  as $Re_A(T)=\frac{T+T^{\#_A}}{2}$ and $Im_A(T)=\frac{T-T^{\#_A}}{2i}$.  
An interested reader may refer \cite{ARIS,ARIS2} for further properties of operators on Semi-Hilbertian space.
From the definition of $A$-numerical radius of an operator, it follows that
\begin{equation}\label{eqn_00000001.4}
   w_A(T)=w_A(T^{\#_A}) \mbox{  for   any  }   T\in\mathcal{B}_A(\mathcal{H}). 
\end{equation}
Some interesting results are collected hereunder for further use.
\begin{lem}\label{lem_fek}\textnormal{(Lemma 3.1, \cite{pinfek})}\\
Let $T_{ij}\in \mathcal{B}_A(\mathcal{H})$ for  $1\leq i,j\leq n.$ Then $$T=\begin{bmatrix}
 T_{11} & T_{12}  & \cdots  &   T_{1n}\\
 T_{21}  & T_{22} & \cdots  & T_{2n}\\
 \vdots & \vdots &\vdots & \vdots \\
T_{n1}  & T_{n2}  &\cdots&  T_{nn}
\end{bmatrix}\in \mathcal{B}_{\mathbb{A}}(\mathcal{H}) \mbox{ and }
T^{\#_{\mathbb{A}}}=\begin{bmatrix}
 T_{11}^{\#_A} & T_{21}^{\#_A}  & \cdots  &   T_{n1}^{\#_A}\\
 T_{12}^{\#_A}  & T_{22}^{\#_A} & \cdots  & T_{n2}^{\#_A}\\
 \vdots & \vdots &\vdots & \vdots \\
T_{1n}^{\#_A}  & T_{2n}^{\#_A}  &\cdots&  T_{nn}^{\#_A}
\end{bmatrix}.$$
\end{lem}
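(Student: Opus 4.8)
The plan is to verify the stated formula directly from the definition of the reduced $\mathbb{A}$-adjoint: $T^{\#_{\mathbb{A}}}$ is the unique $\mathbb{A}$-adjoint of $T$, i.e. the unique $X$ satisfying $\mathbb{A}X=T^{*}\mathbb{A}$ whose range is contained in $\overline{\mathcal{R}(\mathbb{A})}$ (equivalently, $T^{\#_{\mathbb{A}}}=\mathbb{A}^{\dagger}T^{*}\mathbb{A}$). Membership $T\in\mathcal{B}_{\mathbb{A}}(\mathcal{H})$ will then come for free from the existence of such an $X$.

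First I would record two elementary structural facts on the space $\bigoplus_{i=1}^{n}\mathcal{H}$ with its usual inner product $\langle x,y\rangle=\sum_{i=1}^{n}\langle x_i,y_i\rangle$. Expanding $\langle Tx,y\rangle$ and comparing components shows that $T^{*}$ is the block matrix whose $(i,j)$ entry is $T_{ji}^{*}$; and since $\mathbb{A}=\mathrm{diag}(A,\dots,A)$, one has $\mathcal{R}(\mathbb{A})=\bigoplus_{i=1}^{n}\mathcal{R}(A)$ and, the direct sum being finite, $\overline{\mathcal{R}(\mathbb{A})}=\bigoplus_{i=1}^{n}\overline{\mathcal{R}(A)}$.

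Next, let $S$ denote the block matrix with $(i,j)$ entry $S_{ij}=T_{ji}^{\#_A}$, which is well defined since each $T_{ji}\in\mathcal{B}_A(\mathcal{H})$. Block multiplication gives that the $(i,j)$ entry of $\mathbb{A}S$ is $A\,T_{ji}^{\#_A}$, which equals $T_{ji}^{*}A$ by the identity $AT^{\#_A}=T^{*}A$ valid for $T\in\mathcal{B}_A(\mathcal{H})$; this is precisely the $(i,j)$ entry of $T^{*}\mathbb{A}$. Hence $\mathbb{A}S=T^{*}\mathbb{A}$, so $S$ is an $\mathbb{A}$-adjoint of $T$; in particular $\mathcal{R}(T^{*}\mathbb{A})\subseteq\mathcal{R}(\mathbb{A})$, i.e. $T\in\mathcal{B}_{\mathbb{A}}(\mathcal{H})$. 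To identify $S$ with the reduced solution $T^{\#_{\mathbb{A}}}$, it remains to check the range condition: the $i$-th component of $Sx$ is $\sum_{j}T_{ji}^{\#_A}x_j$, which lies in $\overline{\mathcal{R}(A)}$ because each $\mathcal{R}(T_{ji}^{\#_A})\subseteq\overline{\mathcal{R}(A)}$ and $\overline{\mathcal{R}(A)}$ is a closed subspace. Thus $\mathcal{R}(S)\subseteq\bigoplus_{i=1}^{n}\overline{\mathcal{R}(A)}=\overline{\mathcal{R}(\mathbb{A})}$, and by uniqueness of the reduced solution $S=T^{\#_{\mathbb{A}}}$, which is the claimed formula.

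The argument is essentially block-matrix bookkeeping; the only points requiring a little care are the two structural facts about $\mathbb{A}$ (the splitting of $\mathcal{R}(\mathbb{A})$ and of its closure) and the observation that being \emph{some} $\mathbb{A}$-adjoint is not enough, so the range containments $\mathcal{R}(T_{ji}^{\#_A})\subseteq\overline{\mathcal{R}(A)}$ recalled in the preliminaries are genuinely needed to single out $T^{\#_{\mathbb{A}}}$. As a quicker alternative one could first verify $\mathbb{A}^{\dagger}=\mathrm{diag}(A^{\dagger},\dots,A^{\dagger})$ directly from the four defining equations of the Moore--Penrose inverse (they hold block-diagonally) and then just compute $T^{\#_{\mathbb{A}}}=\mathbb{A}^{\dagger}T^{*}\mathbb{A}$ entry by entry; I would prefer the route above, though, because it simultaneously yields $T\in\mathcal{B}_{\mathbb{A}}(\mathcal{H})$.
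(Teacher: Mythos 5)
The paper does not prove this lemma; it is imported verbatim as Lemma 3.1 of \cite{pinfek}, so there is no in-paper proof to compare against. Your argument is correct and complete: the block computation $\mathbb{A}S=T^{*}\mathbb{A}$ with $S_{ij}=T_{ji}^{\#_A}$ gives membership in $\mathcal{B}_{\mathbb{A}}$, and you rightly identify that the extra range condition $\mathcal{R}(S)\subseteq\overline{\mathcal{R}(\mathbb{A})}$ (not merely being some $\mathbb{A}$-adjoint) is what singles out the reduced solution $T^{\#_{\mathbb{A}}}$ --- this, or the equivalent direct computation of $\mathbb{A}^{\dagger}T^{*}\mathbb{A}$ that you sketch, is exactly the standard argument in the cited source.
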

The next result is a combination of Lemma 2.4 (i) \cite{PINTU} and Lemma 2.2 \cite{Nirmal2}.
\begin{lem}\label{l001}
 Let  $T_1, T_2, T_3, T_4\in \mathcal{B}_A(\mathcal{H}).$  Then
  \begin{enumerate}
        \item [\textnormal{(i)}] $\max\{w_A(T_1), w_A(T_4)\}= w_\mathbb{A}\left(\begin{bmatrix}
     T_1 & O\\
     O & T_4
     \end{bmatrix}\right)\leq w_\mathbb{A}\left(\begin{bmatrix}
     T_1 & T_2\\
     T_3 & T_4
     \end{bmatrix}\right).$
      \item [\textnormal{(ii)}] $w_\mathbb{A}\left(\begin{bmatrix}
     O & T_2\\
     T_3 & O
     \end{bmatrix}\right)\leq w_\mathbb{A}\left(\begin{bmatrix}
     T_1 & T_2\\
     T_3 & T_4
     \end{bmatrix}\right).$
        \end{enumerate}
 \end{lem}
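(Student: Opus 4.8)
The plan is to reduce both parts to elementary estimates on the $\mathbb{A}$-quadratic form $x\mapsto\langle Tx,x\rangle_{\mathbb{A}}$, using throughout that for $x=(x_1,x_2)\in\mathcal{H}\oplus\mathcal{H}$ one has $\|x\|_{\mathbb{A}}^2=\|x_1\|_A^2+\|x_2\|_A^2$ and
$$\left\langle \begin{bmatrix} T_1 & T_2 \\ T_3 & T_4 \end{bmatrix} x, x \right\rangle_{\mathbb{A}} = \langle T_1 x_1, x_1\rangle_A + \langle T_2 x_2, x_1\rangle_A + \langle T_3 x_1, x_2\rangle_A + \langle T_4 x_2, x_2\rangle_A .$$
I would also use repeatedly the bound $|\langle Sx_0,x_0\rangle_A|\le w_A(S)\,\|x_0\|_A^2$ for $S\in\mathcal{B}_A(\mathcal{H})$, $x_0\in\mathcal{H}$, which follows from the definition of $w_A$ by homogeneity when $\|x_0\|_A\neq0$ and holds trivially when $\|x_0\|_A=0$, since the Cauchy--Schwarz inequality for the semi-inner product then forces $\langle Sx_0,x_0\rangle_A=0$.

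For (i) I would first prove the equality. Testing the quadratic form of $\mathrm{diag}(T_1,T_4)$ on the $\mathbb{A}$-unit vectors $(x_1,0)$ and $(0,x_2)$ gives $w_{\mathbb{A}}(\mathrm{diag}(T_1,T_4))\ge\max\{w_A(T_1),w_A(T_4)\}$, and the very same test vectors give $w_{\mathbb{A}}\!\left(\begin{bmatrix} T_1 & T_2 \\ T_3 & T_4\end{bmatrix}\right)\ge\max\{w_A(T_1),w_A(T_4)\}$, because the off-diagonal entries contribute nothing against a vector supported in a single coordinate. For the reverse inequality $w_{\mathbb{A}}(\mathrm{diag}(T_1,T_4))\le\max\{w_A(T_1),w_A(T_4)\}$ I would apply the triangle inequality to the displayed expression with $T_2=T_3=O$ and then the pointwise bound above, getting for any $\mathbb{A}$-unit $x$
$$|\langle T_1x_1,x_1\rangle_A+\langle T_4x_2,x_2\rangle_A|\le w_A(T_1)\|x_1\|_A^2+w_A(T_4)\|x_2\|_A^2\le\max\{w_A(T_1),w_A(T_4)\},$$
after which a supremum over $x$ closes (i).

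For (ii) the main device would be a sign flip in the second coordinate: if $x=(x_1,x_2)$ is $\mathbb{A}$-unit then so is $\tilde x=(x_1,-x_2)$, and under $x\mapsto\tilde x$ the ``diagonal part'' $\langle T_1x_1,x_1\rangle_A+\langle T_4x_2,x_2\rangle_A$ of the form is unchanged while the ``off-diagonal part'' $F:=\langle T_2x_2,x_1\rangle_A+\langle T_3x_1,x_2\rangle_A$ changes sign. Writing $T=\begin{bmatrix} T_1 & T_2 \\ T_3 & T_4\end{bmatrix}$, this gives $F=\tfrac12\bigl(\langle Tx,x\rangle_{\mathbb{A}}-\langle T\tilde x,\tilde x\rangle_{\mathbb{A}}\bigr)$, hence $|F|\le\tfrac12\bigl(|\langle Tx,x\rangle_{\mathbb{A}}|+|\langle T\tilde x,\tilde x\rangle_{\mathbb{A}}|\bigr)\le w_{\mathbb{A}}(T)$; since $F=\langle\begin{bmatrix} O & T_2 \\ T_3 & O\end{bmatrix}x,x\rangle_{\mathbb{A}}$, taking the supremum over $\mathbb{A}$-unit $x$ yields the claim. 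Alternatively one could argue with the $\mathbb{A}$-unitary $U=\mathrm{diag}(I,-I)$, the identity $w_{\mathbb{A}}(U^{\#_{\mathbb{A}}}TU)=w_{\mathbb{A}}(T)$, and subadditivity and homogeneity of $w_{\mathbb{A}}$ applied to $\tfrac12(T-U^{\#_{\mathbb{A}}}TU)$; there the only point is that $I^{\#_A}=P_{\overline{\mathcal{R}(A)}}$ leaves the quadratic form unchanged, i.e. $\langle P_{\overline{\mathcal{R}(A)}}Sx_0,x_0\rangle_A=\langle Sx_0,x_0\rangle_A$, which follows from $AP_{\overline{\mathcal{R}(A)}}=A$.

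I do not expect a genuine obstacle here; the only steps needing a little care are the degenerate case $\|x_0\|_A=0$ in the pointwise bound, and consistently normalizing everything in the ambient $\mathbb{A}$-seminorm rather than the Hilbert-space norm. Since the statement is explicitly recorded as a combination of Lemma 2.4(i) of \cite{PINTU} and Lemma 2.2 of \cite{Nirmal2}, an alternative is simply to cite those two sources.
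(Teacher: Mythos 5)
Your proposal is correct, and it is worth noting that the paper itself offers no proof of this lemma at all: it is recorded purely as a citation (Lemma 2.4(i) of \cite{PINTU} together with Lemma 2.2 of \cite{Nirmal2}), so your self-contained argument is a genuine addition rather than a paraphrase. Both halves check out. In (i), the test vectors $(x_1,0)$ and $(0,x_2)$ do kill the off-diagonal entries, and the pointwise bound $|\langle Sx_0,x_0\rangle_A|\le w_A(S)\,\|x_0\|_A^2$ is the right tool for the reverse inequality; your treatment of the degenerate case $\|x_0\|_A=0$ via the Cauchy--Schwarz inequality for the semi-inner product is exactly the point that must not be skipped when $A$ is merely positive rather than strictly positive, since $\|\cdot\|_A$ is only a seminorm. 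In (ii), the sign flip $(x_1,x_2)\mapsto(x_1,-x_2)$ preserves the $\mathbb{A}$-seminorm, fixes the diagonal part of the quadratic form and negates the off-diagonal part, so the identity $F=\tfrac12\bigl(\langle Tx,x\rangle_{\mathbb{A}}-\langle T\tilde x,\tilde x\rangle_{\mathbb{A}}\bigr)$ and the bound $|F|\le w_{\mathbb{A}}(T)$ follow; this is the quadratic-form incarnation of the unitary-conjugation argument $\tfrac12\bigl(T-U^{\#_{\mathbb{A}}}TU\bigr)$ used in the cited sources, but it has the advantage of never invoking $I^{\#_A}=P_{\overline{\mathcal{R}(A)}}$ or any adjoint at all. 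Indeed, your version uses only the seminorm structure of $\langle\cdot,\cdot\rangle_A$, so it proves the lemma for arbitrary bounded operators with finite $A$-numerical radius, not just for members of $\mathcal{B}_A(\mathcal{H})$; that is slightly more general than what the paper states and sidesteps the ``$\mathcal{N}(A)^\perp$ invariant'' and ``$A>0$'' issues that the paper is at pains to avoid elsewhere.
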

 
The other parts of Lemma 2.4 \cite{PINTU} assume the condition $A$ is strictly positive. Rout {\it et al.} \cite{Nirmal2} proved the same result for positive $A$, and the same is stated below. 
 
\begin{lem}\label{lemm_0000}[Lemma 2.4, \cite{Nirmal2}]\\
  Let  $T_1, T_2 \in \mathcal{B}_A(\mathcal{H}).$  Then
        \begin{enumerate}
    \item [\textnormal{(i)}] $w_\mathbb{A}\left(\begin{bmatrix}
    O & T_1\\
     T_2 & O
    \end{bmatrix}\right)=w_\mathbb{A}\left(\begin{bmatrix}
    O & T_2\\
     T_1 & O
    \end{bmatrix}\right).$\\
    \item [\textnormal{(ii)}] $w_\mathbb{A}\left(\begin{bmatrix}
    O & T_1\\
     e^{i\theta}T_2 & O
    \end{bmatrix}\right)=w_\mathbb{A}\left(\begin{bmatrix}
    O & T_1\\
     T_2 & O
    \end{bmatrix}\right)$ for~any~$\theta\in\mathbb{R}$.\\
    \item [\textnormal{(iii)}]  $w_\mathbb{A}\left(\begin{bmatrix}
    T_1 & T_2\\
     T_2 & T_1
    \end{bmatrix}\right)=\max\{w_A(T_1+T_2),w_A(T_1-T_2)\}.$
     In particular, $w_\mathbb{A}\left(\begin{bmatrix}
    O & T_2\\
     T_2 & O
    \end{bmatrix}\right)=w_A(T_2).$
\end{enumerate}
\end{lem}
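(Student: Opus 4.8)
The plan is to unfold the $\mathbb{A}$-numerical radius of a $2\times 2$ operator matrix directly from its defining supremum \eqref{eqn1a}, transported to $\mathcal{H}\oplus\mathcal{H}$. Writing a generic vector as $z=\begin{bmatrix}x\\y\end{bmatrix}$, the equality $\mathbb{A}=\begin{bmatrix}A&O\\O&A\end{bmatrix}$ gives $\|z\|_{\mathbb{A}}^2=\|x\|_A^2+\|y\|_A^2$, and for $S=\begin{bmatrix}O&T_1\\T_2&O\end{bmatrix}$ one computes $\langle Sz,z\rangle_{\mathbb{A}}=\langle T_1y,x\rangle_A+\langle T_2x,y\rangle_A$. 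Hence
\[
w_{\mathbb{A}}(S)=\sup\bigl\{\,|\langle T_1y,x\rangle_A+\langle T_2x,y\rangle_A|:\ \|x\|_A^2+\|y\|_A^2=1\,\bigr\},
\]
and (i), (ii) reduce to measure-preserving substitutions on the constraint set $E=\{(x,y):\|x\|_A^2+\|y\|_A^2=1\}$.

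For (i), the swap $(x,y)\mapsto(y,x)$ is a bijection of $E$ onto itself and carries the supremum defining $w_{\mathbb{A}}\!\left(\begin{bmatrix}O&T_2\\T_1&O\end{bmatrix}\right)$ into the one defining $w_{\mathbb{A}}\!\left(\begin{bmatrix}O&T_1\\T_2&O\end{bmatrix}\right)$, whence equality. For (ii), use instead the substitution $y\mapsto e^{i\theta/2}y$ with $x$ fixed; since $\|e^{i\theta/2}y\|_A=\|y\|_A$ this is again a bijection of $E$, and because $\langle\cdot,\cdot\rangle_A$ is conjugate-linear in its second slot,
\[
\langle T_1(e^{i\theta/2}y),x\rangle_A+e^{i\theta}\langle T_2x,e^{i\theta/2}y\rangle_A
=e^{i\theta/2}\bigl(\langle T_1y,x\rangle_A+\langle T_2x,y\rangle_A\bigr),
\]
whose modulus equals $|\langle T_1y,x\rangle_A+\langle T_2x,y\rangle_A|$. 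Taking suprema proves (ii).

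For (iii) I would use the Hadamard-type block matrix $V=\frac{1}{\sqrt2}\begin{bmatrix}I&I\\I&-I\end{bmatrix}$. It is selfadjoint, satisfies $V^2=I$ (so it is a bijection of $\mathcal{H}\oplus\mathcal{H}$), and commutes with $\mathbb{A}=\begin{bmatrix}A&O\\O&A\end{bmatrix}$ precisely because the two diagonal entries of $\mathbb{A}$ coincide; therefore $\|Vz\|_{\mathbb{A}}=\|z\|_{\mathbb{A}}$ for all $z$, so $z\mapsto Vz$ permutes the unit $\mathbb{A}$-sphere. Using $V^*=V$ and $V\mathbb{A}=\mathbb{A}V$ one gets $\langle VSVz,z\rangle_{\mathbb{A}}=\langle SVz,Vz\rangle_{\mathbb{A}}$ for every operator $S$, and taking the supremum over $\|z\|_{\mathbb{A}}=1$ yields $w_{\mathbb{A}}(VSV)=w_{\mathbb{A}}(S)$. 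A direct multiplication gives
\[
V\begin{bmatrix}T_1&T_2\\T_2&T_1\end{bmatrix}V=\begin{bmatrix}T_1+T_2&O\\O&T_1-T_2\end{bmatrix},
\]
so $w_{\mathbb{A}}\!\left(\begin{bmatrix}T_1&T_2\\T_2&T_1\end{bmatrix}\right)=w_{\mathbb{A}}\!\left(\begin{bmatrix}T_1+T_2&O\\O&T_1-T_2\end{bmatrix}\right)$, and Lemma \ref{l001}(i) identifies the latter with $\max\{w_A(T_1+T_2),w_A(T_1-T_2)\}$. The ``in particular'' clause follows by setting $T_1=O$ and noting $w_A(-T_2)=w_A(T_2)$, immediate from \eqref{eqn1a}.

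None of these steps is deep; the only points demanding care are that the substitutions used really are bijections of $E$, resp.\ of the unit $\mathbb{A}$-sphere (this rests on phase-invariance of $\|\cdot\|_A$ and on $\mathbb{A}$ being block-diagonal with equal diagonal entries), and that the conjugate-linearity of $\langle\cdot,\cdot\rangle_A$ in the second variable is tracked correctly when the phase $e^{i\theta}$ is carried through in (ii). The structural fact $\mathbb{A}=\begin{bmatrix}A&O\\O&A\end{bmatrix}$ is exactly what makes $V$ (and the swap) $\mathbb{A}$-commuting, hence $\mathbb{A}$-isometric, and this is what lets the off-diagonal terms cancel in (iii) without assuming $A>0$.
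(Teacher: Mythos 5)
Your proposal is correct. Note first that the paper under review does not actually prove this lemma: it imports it verbatim from \cite{Nirmal2} (Lemma 2.4 there), so there is no in-text proof to compare against line by line. The route taken in that source, and throughout this paper for analogous statements, is conjugation by $\mathbb{A}$-unitaries in the $\#_{\mathbb{A}}$ sense --- e.g.\ $\begin{bmatrix}O&I\\I&O\end{bmatrix}$ for (i), a diagonal phase matrix for (ii), and $\tfrac{1}{\sqrt2}\begin{bmatrix}I&I\\I&-I\end{bmatrix}$ for (iii) --- together with $w_{\mathbb{A}}(U^{\#_{\mathbb{A}}}TU)=w_{\mathbb{A}}(T)$. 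That route forces one to handle $I^{\#_A}=P_{\overline{\mathcal{R}(A)}}$ and the failure of $P_{\overline{\mathcal{R}(A)}}T=TP_{\overline{\mathcal{R}(A)}}$, which is precisely the technical issue (via \eqref{eqn00_1.8} and \eqref{eqn00_1.9}) this paper spends effort circumventing. Your argument sidesteps all of that: for (i) and (ii) you unfold the supremum \eqref{eqn1a} over $\mathcal{H}\oplus\mathcal{H}$ and use norm-preserving substitutions on the constraint set, and for (iii) you conjugate by $V$ using only its \emph{ordinary} selfadjointness, $V^2=I$, and $V\mathbb{A}=\mathbb{A}V$, never invoking $V^{\#_{\mathbb{A}}}$. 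The phase bookkeeping in (ii) is tracked correctly (conjugate-linearity in the second slot gives the factor $e^{-i\theta/2}$, so the total prefactor is the unimodular $e^{i\theta/2}$), the block computation $V\begin{bmatrix}T_1&T_2\\T_2&T_1\end{bmatrix}V=\begin{bmatrix}T_1+T_2&O\\O&T_1-T_2\end{bmatrix}$ is right, and the appeal to Lemma \ref{l001}(i) is legitimate since $T_1\pm T_2\in\mathcal{B}_A(\mathcal{H})$. What your approach buys is exactly what this paper advertises elsewhere: the result for merely positive $A$, with no invariance hypothesis on $\mathcal{N}(A)^{\perp}$ and no Moore--Penrose manipulations; what the $\#_{\mathbb{A}}$-unitary route buys is uniformity with the rest of the semi-Hilbertian formalism, at the cost of the projection technicalities.
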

The next result establishes upper and lower bounds for the $\mathbb{A}$-numerical radius of a particular type of $2\times 2$ operator matrix that is a generalization of \eqref{ineq1}.
\begin{lem}\label{thm_002.6}[Theorem 2.6, \cite{Nirmal2}]\\
 Let $T_1, T_2\in \mathcal{B}_A(\mathcal{H}).$ Then 
\begin{equation}\label{eq01}
    \max\{w_A(T_1), w_A(T_2)\}\leq w_{\mathbb{A}}\left(\begin{bmatrix}
T_1 & T_2\\
-T_2 & -T_1
\end{bmatrix}\right)\leq w_A(T_1)+w_A(T_2).
\end{equation}
\end{lem}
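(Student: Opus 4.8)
The plan is to prove the chain of inequalities in \eqref{eq01} by exploiting the structure of the matrix $S=\begin{bmatrix} T_1 & T_2\\ -T_2 & -T_1\end{bmatrix}$ together with the tools already assembled in the preliminaries, especially Lemma~\ref{l001}, Lemma~\ref{lemm_0000}, and the unitary-similarity invariance $w_{\mathbb{A}}(U^{\#_{\mathbb{A}}}SU)=w_{\mathbb{A}}(S)$ for $\mathbb{A}$-unitary $U$.

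For the \textbf{lower bound}, I would first observe that $\max\{w_A(T_1),w_A(T_2)\}$ splits naturally into two cases. To get $w_A(T_1)\le w_{\mathbb{A}}(S)$, note that the diagonal compression $\begin{bmatrix} T_1 & O\\ O & -T_1\end{bmatrix}$ has $\mathbb{A}$-numerical radius equal to $\max\{w_A(T_1),w_A(-T_1)\}=w_A(T_1)$ by Lemma~\ref{l001}(i); but $S$ is not obtained from this by deleting off-diagonal entries in the right orientation, so instead I would pass to an $\mathbb{A}$-unitary conjugation. A cleaner route: apply Lemma~\ref{l001}(i) after an $\mathbb{A}$-unitary flip $U=\begin{bmatrix} O & I\\ I & O\end{bmatrix}$ (suitably interpreted, i.e.\ $U=\begin{bmatrix} O & P_{\overline{\mathcal{R}(A)}}\\ P_{\overline{\mathcal{R}(A)}} & O\end{bmatrix}$ acting as an $\mathbb{A}$-unitary), which conjugates $S$ to $\begin{bmatrix} -T_1 & -T_2\\ T_2 & T_1\end{bmatrix}$; combining with sign changes $\mathrm{diag}(I,-I)$ one reduces to the bound already known for $\begin{bmatrix} T_1 & T_2\\ T_3 & T_4\end{bmatrix}$. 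For $w_A(T_2)\le w_{\mathbb{A}}(S)$, I would use Lemma~\ref{l001}(ii) to drop the diagonal, getting $w_{\mathbb{A}}\left(\begin{bmatrix} O & T_2\\ -T_2 & O\end{bmatrix}\right)\le w_{\mathbb{A}}(S)$, then apply Lemma~\ref{lemm_0000}(ii) with $\theta=\pi$ and Lemma~\ref{lemm_0000}(iii) (the ``in particular'' clause) to identify $w_{\mathbb{A}}\left(\begin{bmatrix} O & T_2\\ -T_2 & O\end{bmatrix}\right)=w_{\mathbb{A}}\left(\begin{bmatrix} O & T_2\\ T_2 & O\end{bmatrix}\right)=w_A(T_2)$.

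For the \textbf{upper bound}, the natural idea is to write $S$ as a sum of two pieces whose $\mathbb{A}$-numerical radii are controlled by $w_A(T_1)$ and $w_A(T_2)$ respectively, and invoke subadditivity of $w_{\mathbb{A}}$. Write
\begin{equation*}
S=\begin{bmatrix} T_1 & O\\ O & -T_1\end{bmatrix}+\begin{bmatrix} O & T_2\\ -T_2 & O\end{bmatrix}.
\end{equation*}
By Lemma~\ref{l001}(i), the first summand has $\mathbb{A}$-numerical radius $\max\{w_A(T_1),w_A(-T_1)\}=w_A(T_1)$; by the computation in the previous paragraph the second summand has $\mathbb{A}$-numerical radius $w_A(T_2)$. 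Since $w_{\mathbb{A}}$ is a seminorm (it is subadditive and absolutely homogeneous on $\mathcal{B}_{\mathbb{A}}(\mathcal{H})$, a standard fact following from \eqref{ineq1} and the definition), $w_{\mathbb{A}}(S)\le w_A(T_1)+w_A(T_2)$, which is exactly the right-hand inequality.

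The main obstacle I anticipate is \emph{not} the algebra but the careful handling of the semi-Hilbertian setting: the ``obvious'' $\mathbb{A}$-unitary operators like $\begin{bmatrix} O & I\\ I & O\end{bmatrix}$ must be verified to lie in $\mathcal{B}_{\mathbb{A}}(\mathcal{H})$ and to be genuinely $\mathbb{A}$-unitary in the sense $\|Ux\|_{\mathbb{A}}=\|U^{\#_{\mathbb{A}}}x\|_{\mathbb{A}}=\|x\|_{\mathbb{A}}$, and the identifications such as $w_A(-T_1)=w_A(T_1)$ and the various reductions via Lemma~\ref{lemm_0000} must respect the fact that one is only testing against vectors and that $w_A(\cdot)$ is insensitive to the components of operators in $\mathcal{N}(A)$. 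One must also make sure the decomposition argument for the upper bound uses only subadditivity of $w_{\mathbb{A}}$, which holds on the subalgebra $\mathcal{B}_{\mathbb{A}}(\mathcal{H})$; since $T_1,T_2\in\mathcal{B}_A(\mathcal{H})$ implies $S\in\mathcal{B}_{\mathbb{A}}(\mathcal{H})$ by Lemma~\ref{lem_fek}, this is legitimate. Once these semi-Hilbertian technicalities are pinned down, the proof is short.
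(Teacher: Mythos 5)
The paper does not actually prove this statement: it is imported verbatim as Theorem~2.6 of \cite{Nirmal2} and used as a black box, so there is no in-paper proof to compare against. Judged on its own terms, your argument is correct and assembles exactly the right ingredients from the quoted preliminaries. The upper bound via the decomposition
\begin{equation*}
\begin{bmatrix} T_1 & T_2\\ -T_2 & -T_1\end{bmatrix}
=\begin{bmatrix} T_1 & O\\ O & -T_1\end{bmatrix}+\begin{bmatrix} O & T_2\\ -T_2 & O\end{bmatrix},
\end{equation*}
subadditivity of $w_{\mathbb{A}}$ (which follows from the definition \eqref{eqn1a} alone, as a supremum of subadditive quantities --- you do not need \eqref{ineq1} for this), the equality $w_{\mathbb{A}}(\mathrm{diag}(T_1,-T_1))=w_A(T_1)$ from Lemma~\ref{l001}(i), and the identification $w_{\mathbb{A}}\bigl(\begin{smallmatrix} O & T_2\\ -T_2 & O\end{smallmatrix}\bigr)=w_A(T_2)$ via Lemma~\ref{lemm_0000}(ii) with $\theta=\pi$ and the ``in particular'' clause of (iii), is all sound, as is the $w_A(T_2)$ half of the lower bound via Lemma~\ref{l001}(ii). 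The one thing to fix is the detour for $w_A(T_1)\le w_{\mathbb{A}}(S)$: the obstruction you describe (``not obtained \ldots in the right orientation'') is illusory. Lemma~\ref{l001}(i) is stated for arbitrary off-diagonal entries, so applying it directly to $S$ with $T_3=-T_2$ and $T_4=-T_1$ gives $\max\{w_A(T_1),w_A(-T_1)\}=w_A(T_1)\le w_{\mathbb{A}}(S)$ immediately; the $\mathbb{A}$-unitary flip and sign-change conjugations, which you leave vague, can simply be deleted. With that simplification the proof is complete and short.
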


\begin{lem}\label{l002}[Lemma 2.8, \cite{Nirmal2}]\\
Let $T_1, T_2 \in \mathcal{B}_A(\mathcal{H}).$ Then
$$w_{\mathbb{A}}\left(\begin{bmatrix}
T_2 & -T_1\\
T_1 & T_2
\end{bmatrix}\right)=\max\{w_A(T_1+iT_2), w_A(T_1-iT_2)\} .$$
\end{lem}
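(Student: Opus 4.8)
The plan is to reduce the $2\times 2$ matrix $\begin{bmatrix} T_2 & -T_1\\ T_1 & T_2\end{bmatrix}$ to a block-diagonal form via an $\mathbb{A}$-unitary similarity, since $\mathbb{A}$-unitary conjugation preserves the $\mathbb{A}$-numerical radius (as recalled in the preliminaries). The natural candidate is $U=\tfrac{1}{\sqrt 2}\begin{bmatrix} I & iI\\ iI & I\end{bmatrix}$, or rather the operator on $\mathcal{H}\oplus\mathcal{H}$ whose entries are built from $P_{\overline{\mathcal{R}(A)}}$ so that it genuinely lands in $\mathcal{B}_{\mathbb{A}}(\mathcal{H})$ and satisfies $\|U x\|_{\mathbb{A}}=\|U^{\#_{\mathbb{A}}}x\|_{\mathbb{A}}=\|x\|_{\mathbb{A}}$. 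First I would verify, using Lemma \ref{lem_fek} to compute $U^{\#_{\mathbb{A}}}$ entrywise together with the identities \eqref{eqn00_1.8}--\eqref{eqn00_1.9}, that this $U$ is $\mathbb{A}$-unitary; the point of inserting $P_{\overline{\mathcal{R}(A)}}$ is precisely that $I^{\#_A}=P_{\overline{\mathcal{R}(A)}}$ rather than $I$, so the "obvious" unitary from the classical $A=I$ setting must be corrected.

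Next I would carry out the conjugation. A direct block computation gives
\[
U^{\#_{\mathbb{A}}}\begin{bmatrix} T_2 & -T_1\\ T_1 & T_2\end{bmatrix}U
=\begin{bmatrix} T_1+iT_2 & O\\ O & -(T_1-iT_2)\end{bmatrix}
\]
up to compressions by $P_{\overline{\mathcal{R}(A)}}$ that do not affect the $A$-numerical radius (because $w_A(T)=w_A(P_{\overline{\mathcal{R}(A)}}TP_{\overline{\mathcal{R}(A)}})$ for $T\in\mathcal{B}_A(\mathcal{H})$, which follows since vectors in the relevant suprema may be taken in $\overline{\mathcal{R}(A)}$). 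Then Lemma \ref{l001}(i), or rather the equality half of it, namely $w_{\mathbb{A}}\!\left(\begin{bmatrix}S_1&O\\O&S_2\end{bmatrix}\right)=\max\{w_A(S_1),w_A(S_2)\}$, together with the elementary fact $w_A(-S)=w_A(S)$, yields
\[
w_{\mathbb{A}}\!\left(\begin{bmatrix} T_2 & -T_1\\ T_1 & T_2\end{bmatrix}\right)=\max\{w_A(T_1+iT_2),\,w_A(T_1-iT_2)\},
\]
which is the claim.

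The main obstacle I anticipate is the bookkeeping with $P_{\overline{\mathcal{R}(A)}}$: the classical unitary-conjugation argument works verbatim when $A>0$, but for a merely positive $A$ the operator $U$ built from $I$'s is not $\mathbb{A}$-unitary, and one must either use the corrected $U$ with $P_{\overline{\mathcal{R}(A)}}$ entries or else argue that the discrepancy is invisible to $w_A$. The cleanest route is probably to prove and record once the auxiliary fact $w_A(P_{\overline{\mathcal{R}(A)}}TP_{\overline{\mathcal{R}(A)}})=w_A(T)$, which follows from $\langle Tx,x\rangle_A=\langle T P_{\overline{\mathcal{R}(A)}}x, P_{\overline{\mathcal{R}(A)}}x\rangle_A$ for $x\in\mathcal{H}$ together with $\|P_{\overline{\mathcal{R}(A)}}x\|_A=\|x\|_A$, and then the block computation goes through with only cosmetic insertions of the projection. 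An alternative, avoiding $\mathbb{A}$-unitaries altogether, is to prove both inequalities directly: the lower bound $\max\{w_A(T_1+iT_2),w_A(T_1-iT_2)\}\le w_{\mathbb{A}}(\cdots)$ by testing on vectors of the form $\tfrac{1}{\sqrt2}(x,\pm ix)$, and the upper bound by writing $\langle \begin{bmatrix}T_2&-T_1\\T_1&T_2\end{bmatrix}(x,y),(x,y)\rangle_{\mathbb{A}}$ as a combination of $\langle(T_1\pm iT_2)u,u\rangle_A$-type terms and estimating; this is more hands-on but sidesteps the subtleties about whether the conjugating operator lies in $\mathcal{B}_{\mathbb{A}}(\mathcal{H})$.
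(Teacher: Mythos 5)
The paper does not prove this lemma at all: it is imported verbatim from reference \cite{Nirmal2} (Lemma 2.8 there), so there is no in-paper proof to compare against. Judged on its own merits, your unitary-conjugation strategy is sound and is exactly in the spirit of the techniques this paper does use elsewhere: the verification that a scalar-entried block operator such as $Q=\begin{bmatrix}O&I\\I&O\end{bmatrix}$ is $\mathbb{A}$-unitary via $Q^{\#_\mathbb{A}}$ computed entrywise with $I^{\#_A}=P_{\overline{\mathcal{R}(A)}}$, the invariance $w_\mathbb{A}(U^{\#_\mathbb{A}}TU)=w_\mathbb{A}(T)$, the absorption of stray projections via \eqref{eqn00_1.8}--\eqref{eqn00_1.9} and the theorem $w_A(P_{\overline{\mathcal{R}(A)}}T)=w_A(TP_{\overline{\mathcal{R}(A)}})=w_A(T)$, and finally Lemma \ref{l001}(i) for the diagonal block matrix. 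Your fallback route (test vectors $\tfrac{1}{\sqrt2}(x,\pm ix)$ for the lower bound, direct expansion for the upper bound) also works and is arguably closer to how such identities are usually proved from scratch.

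One concrete error, though it is harmless to the conclusion: the displayed identity
$U^{\#_{\mathbb{A}}}\begin{bmatrix} T_2 & -T_1\\ T_1 & T_2\end{bmatrix}U
=\begin{bmatrix} T_1+iT_2 & O\\ O & -(T_1-iT_2)\end{bmatrix}$
is not what the conjugation produces. With $U=\tfrac{1}{\sqrt 2}\begin{bmatrix} I & iI\\ iI & I\end{bmatrix}$ the diagonal blocks come out as $-iP_{\overline{\mathcal{R}(A)}}(T_1+iT_2)$ and $iP_{\overline{\mathcal{R}(A)}}(T_1-iT_2)$; no unitary conjugation can yield your stated blocks, as the scalar case $T_1=I$, $T_2=O$ already shows (the matrix $\begin{bmatrix}0&-1\\1&0\end{bmatrix}$ has spectrum $\{\pm i\}$, not $\{\pm 1\}$). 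Since $w_A(\lambda S)=|\lambda|\,w_A(S)$ and $|\pm i|=1$, and since the left factor $P_{\overline{\mathcal{R}(A)}}$ does not change $w_A$, the final equality $w_{\mathbb{A}}(\cdot)=\max\{w_A(T_1+iT_2),w_A(T_1-iT_2)\}$ still follows; you should just correct the intermediate display and state explicitly that you are discarding unimodular scalars.
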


\begin{lem}\label{lemm00001}[Theorem 2.6, \cite{Feki_some}]\\
Let $T,S\in\mathcal{B}_A({\mathcal{H}}).$ Then $$w_A(TS\pm ST^{\#_A})\leq 2\|T\|_Aw_A(S).$$
\end{lem}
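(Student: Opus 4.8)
The plan is to reduce the statement to a single estimate for the semi-inner product $\langle S\,\cdot,\cdot\rangle_A$ and then exploit a polarization identity. First I would note that the asserted inequality is positively homogeneous of degree one in $T$: since $T^{\#_A} = A^{\dagger}T^{*}A$ we have $(cT)^{\#_A} = \overline{c}\,T^{\#_A}$, so for real $c > 0$ one gets $(cT)S \pm S(cT)^{\#_A} = c\,(TS \pm ST^{\#_A})$ while $\|cT\|_A = c\|T\|_A$. Moreover, if $\|T\|_A = 0$, then $w_A(TS) \le \|TS\|_A \le \|T\|_A\|S\|_A = 0$ and $w_A(ST^{\#_A}) \le \|S\|_A\|T^{\#_A}\|_A = \|S\|_A\|T\|_A = 0$, so the left-hand side vanishes by subadditivity of $w_A$ and the inequality is trivial. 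Hence we may assume $\|T\|_A = 1$.

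Now fix $x \in \mathcal{H}$ with $\|x\|_A = 1$ and set $y := T^{\#_A}x$. By \eqref{ineq0} we have $\|T^{\#_A}\|_A = \|T\|_A = 1$, hence $\|y\|_A \le \|T^{\#_A}\|_A\,\|x\|_A \le 1$. Using the $A$-adjoint identity $\langle Tu, v\rangle_A = \langle u, T^{\#_A}v\rangle_A$, I would rewrite
\[
\langle (TS \pm ST^{\#_A})x, x\rangle_A = \langle Sx, T^{\#_A}x\rangle_A \pm \langle S(T^{\#_A}x), x\rangle_A = \langle Sx, y\rangle_A \pm \langle Sy, x\rangle_A .
\]

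The decisive point is that one must not estimate the two summands separately — this would only produce the weaker factor $1 + \|T\|_A^2$ — but instead treat the combination via the polarization identities
\[
\langle Sx, y\rangle_A + \langle Sy, x\rangle_A = \tfrac{1}{2}\bigl[\langle S(x+y), x+y\rangle_A - \langle S(x-y), x-y\rangle_A\bigr],
\]
\[
\langle Sx, y\rangle_A - \langle Sy, x\rangle_A = \tfrac{1}{2i}\bigl[\langle S(x-iy), x-iy\rangle_A - \langle S(x+iy), x+iy\rangle_A\bigr].
\]
Applying the bound $|\langle Sz, z\rangle_A| \le w_A(S)\,\|z\|_A^2$ to each of the four terms, together with the parallelogram law for the seminorm, $\|x+y\|_A^2 + \|x-y\|_A^2 = \|x+iy\|_A^2 + \|x-iy\|_A^2 = 2\|x\|_A^2 + 2\|y\|_A^2 \le 4$, shows that both right-hand sides are bounded by $\tfrac{1}{2}\,w_A(S)\cdot 4 = 2\,w_A(S) = 2\|T\|_A\,w_A(S)$. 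Taking the supremum over all $x$ with $\|x\|_A = 1$ then yields $w_A(TS \pm ST^{\#_A}) \le 2\|T\|_A\,w_A(S)$.

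The only real obstacle is recognizing that a termwise triangle inequality is too lossy and that the correct device is the paired polarization above; once that is in place, the degree-one homogeneity normalization $\|T\|_A = 1$ converts the crude $1 + \|T\|_A^2$ into the sharp constant $2\|T\|_A$, and everything else is routine bookkeeping with the elementary properties of $\#_A$ and $\|\cdot\|_A$ recorded in Section~2.
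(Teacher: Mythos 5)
The paper does not actually prove this lemma: it is imported verbatim as Theorem 2.6 of \cite{Feki_some} and used as a black box, so there is no in-paper proof to compare against. Your argument is a correct, self-contained proof. The reduction to $\|T\|_A=1$ via degree-one homogeneity (using $(cT)^{\#_A}=\overline{c}\,T^{\#_A}$ and the separate treatment of $\|T\|_A=0$, where $ATA=O$ forces both summands to have vanishing $A$-norm) is sound; the identity $\langle (TS\pm ST^{\#_A})x,x\rangle_A=\langle Sx,y\rangle_A\pm\langle Sy,x\rangle_A$ with $y=T^{\#_A}x$ is exactly the defining property of the $A$-adjoint; the two polarization identities check out for the sesquilinear form $\langle S\cdot,\cdot\rangle_A$; and the parallelogram law holds for $\|\cdot\|_A$ because it is induced by a semi-inner product. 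The one point worth making explicit is the bound $|\langle Sz,z\rangle_A|\le w_A(S)\|z\|_A^2$ when $\|z\|_A=0$: there one needs $\langle Az,z\rangle=0\Rightarrow Az=0$ (via $A^{1/2}$), whence $\langle Sz,z\rangle_A=\langle Sz,Az\rangle=0$, so the estimate survives degeneracy of the seminorm. Your remark that termwise estimation only yields $(1+\|T\|_A^2)\,w_A(S)$ and that normalization plus homogeneity recovers the sharp constant $2\|T\|_A$ is exactly right; this is the Fong--Holbrook polarization device transplanted to the semi-Hilbertian setting, and it gives the paper an elementary proof of a result it currently only cites.
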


\section{Main Results}
  We begin this section with the following result which provides an estimate for $A$-operator norms of certain $2\times 2$ operator matrices.

\begin{thm}\label{thm_00003.8}
Let $T\in\mathcal{B}_A(\mathcal{H})$ and $z_1,z_2\in \mathbb{C}$. Then
$$\left\|\begin{bmatrix}z_1I&T\\O&z_2I
\end{bmatrix}\right\|_\mathbb{A}=\frac{1}{\sqrt{2}}\sqrt{|z_1|^2+|z_2|^2+\|T\|_A^2+\sqrt{(|z_1|^2+|z_2|^2+\|T\|_A^2)^2-4|z_1|^2|z_2|^2}}.
$$
\end{thm}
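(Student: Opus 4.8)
\medskip
\noindent\emph{Proof strategy.} The plan is to identify $\|\cdot\|_{\mathbb A}$ with a supremum of squared seminorms of images and then solve an elementary $2\times 2$ eigenvalue problem. Write $\mathbf M=\begin{bmatrix}z_1I&T\\O&z_2I\end{bmatrix}$. Since $\mathbb A=A\oplus A$ we have $\overline{\mathcal R(\mathbb A)}=\overline{\mathcal R(A)}\oplus\overline{\mathcal R(A)}$, so either directly from the definition of the operator seminorm, or via $\|\mathbf M\|_{\mathbb A}^2=\|\mathbf M^{\#_{\mathbb A}}\mathbf M\|_{\mathbb A}$ together with the $\mathbb A$-positivity of $\mathbf M^{\#_{\mathbb A}}\mathbf M$ (whose block form $\begin{bmatrix}|z_1|^2P&\bar z_1PT\\ z_1T^{\#_A}&T^{\#_A}T+|z_2|^2P\end{bmatrix}$, $P=P_{\overline{\mathcal R(A)}}$, is read off from Lemma~\ref{lem_fek} and \eqref{eqn00_1.7}), one gets
$$\|\mathbf M\|_{\mathbb A}^2=\sup\bigl\{\|z_1x+Ty\|_A^2+|z_2|^2\|y\|_A^2\ :\ x,y\in\overline{\mathcal R(A)},\ \|x\|_A^2+\|y\|_A^2=1\bigr\}.$$
Expanding $\|z_1x+Ty\|_A^2=|z_1|^2\|x\|_A^2+2\,\mathrm{Re}\bigl(z_1\langle x,Ty\rangle_A\bigr)+\|Ty\|_A^2$, the whole problem reduces to controlling the cross term in terms of $s:=\|x\|_A$ and $t:=\|y\|_A$.

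For the upper bound, the Cauchy--Schwarz inequality for $\langle\cdot,\cdot\rangle_A$ and the bound $\|Ty\|_A\le\|T\|_A\|y\|_A$ give $2\,\mathrm{Re}(z_1\langle x,Ty\rangle_A)\le 2|z_1|\|T\|_A\,st$ and $\|Ty\|_A^2\le\|T\|_A^2t^2$, so $\|\mathbf M\mathbf x\|_{\mathbb A}^2\le\langle N\xi,\xi\rangle$ for $\xi=(s,t)\in\mathbb R^2$ and $N=\begin{bmatrix}|z_1|^2&|z_1|\|T\|_A\\ |z_1|\|T\|_A&\|T\|_A^2+|z_2|^2\end{bmatrix}$. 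As $N$ is real symmetric with nonnegative entries, its largest eigenvalue is attained at a nonnegative unit vector, so the supremum over $s,t\ge0$ with $s^2+t^2=1$ equals $\lambda_{\max}(N)$. Since $\operatorname{tr}N=|z_1|^2+|z_2|^2+\|T\|_A^2$ and $\det N=|z_1|^2|z_2|^2$, we obtain $\lambda_{\max}(N)=\tfrac12\bigl(\operatorname{tr}N+\sqrt{(\operatorname{tr}N)^2-4\det N}\bigr)$, and taking square roots gives exactly the claimed right-hand side as an upper bound.

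For the lower bound, assume $z_1\neq0$ and $\|T\|_A>0$ (the remaining cases make the formula elementary). Fix $\varepsilon\in(0,\|T\|_A)$ and, using $\|T\|_A=\sup\{|\langle Tu,v\rangle_A|:u,v\in\overline{\mathcal R(A)},\ \|u\|_A=\|v\|_A=1\}$, choose $A$-unit vectors $u,v\in\overline{\mathcal R(A)}$ with $|\langle Tu,v\rangle_A|>\|T\|_A-\varepsilon$; then Cauchy--Schwarz and the definition of $\|T\|_A$ force $\|T\|_A-\varepsilon<\|Tu\|_A\le\|T\|_A$ as well. Let $(\beta,\alpha)$ be a nonnegative unit eigenvector of $N_\varepsilon:=\begin{bmatrix}|z_1|^2&|z_1|(\|T\|_A-\varepsilon)\\ |z_1|(\|T\|_A-\varepsilon)&(\|T\|_A-\varepsilon)^2+|z_2|^2\end{bmatrix}$ for $\lambda_{\max}(N_\varepsilon)$, put $y=\alpha u$ and $x=\beta e^{i\phi}v$ with $\phi$ chosen so that $z_1\langle x,Ty\rangle_A=\alpha\beta|z_1|\,|\langle Tu,v\rangle_A|\ge0$ (legitimate since $\overline{\mathcal R(A)}$ is a subspace). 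Then $\mathbf x=(x,y)\in\overline{\mathcal R(\mathbb A)}$ is an $\mathbb A$-unit vector and a direct substitution gives $\|\mathbf M\mathbf x\|_{\mathbb A}^2\ge\langle N_\varepsilon(\beta,\alpha),(\beta,\alpha)\rangle=\lambda_{\max}(N_\varepsilon)$; letting $\varepsilon\to0^+$ and using continuity of $\lambda_{\max}$ in the entries of $N_\varepsilon$ yields $\|\mathbf M\|_{\mathbb A}^2\ge\lambda_{\max}(N)$, which together with the upper bound finishes the proof.

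The main obstacle is this last step: one must realize near-equality in the Cauchy--Schwarz estimate \emph{and} in $\|Ty\|_A\le\|T\|_A\|y\|_A$ by the \emph{same} pair of vectors and with the optimal ratio $\|x\|_A:\|y\|_A$. What makes it go through is that a near-extremal pair $(u,v)$ for $|\langle Tu,v\rangle_A|$ automatically makes $\|Tu\|_A$ near-extremal, so the only remaining freedom needed is the phase of $x$ and one scalar ratio, the latter being attainable with nonnegative weights precisely because $N$ has nonnegative entries.
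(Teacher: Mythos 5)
Your proof is correct and follows essentially the same route as the paper's: both reduce the problem to the $2\times 2$ nonnegative scalar matrix $\begin{bmatrix}|z_1|&\|T\|_A\\0&|z_2|\end{bmatrix}$ (you via its Gram matrix $N$ and $\lambda_{\max}$, the paper via its operator norm, which is the same number), and both obtain the lower bound by the same construction — a near-extremal pair for $\langle T\cdot,\cdot\rangle_A$, optimal nonnegative weights, and a phase adjustment. The only real difference is that you prove the upper bound directly by Cauchy--Schwarz instead of citing Lemma 2.1 of \cite{Feki1}, and you are a bit more careful than the paper in noting that near-extremality of $|\langle Tu,v\rangle_A|$ forces near-extremality of $\|Tu\|_A$.
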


\begin{proof}
Let $\alpha,\beta \in \mathbb{R}$ such that $\alpha^2+\beta^2=1$ and 
\begin{align}\label{eqn_al1}
    \left\|\begin{bmatrix}|z_1|& \|T\|_A\\O&|z_2|
\end{bmatrix}\right\|&=\left\|\begin{bmatrix}|z_1|& \|T\|_A\\O&|z_2|
\end{bmatrix}\begin{bmatrix}\alpha\\\beta
\end{bmatrix}\right\|\nonumber\\
&=\left\|\begin{bmatrix}|z_1|\alpha+\|T\|_A\beta\\|z_2|\beta
\end{bmatrix}\right\|\nonumber\\
&=\sqrt{|z_2|^2\beta^2+(|z_1|\alpha+\|T\|_A\beta)^2}.
\end{align} 
Let $x_n,y_n\in\mathcal{H}$ be two unit vectors in $\mathcal{H}$ such that $\displaystyle\lim_{n\rightarrow\infty}|\langle Ty_n,x_n\rangle|=\|T\|_A$ for $n\in\mathbb{N}.$ 
Let $a_n\in\mathbb{R}$ be such that $\overline{z_1}\langle Ty_n,x_n\rangle_A=e^{ia_n}|z_1|\langle Ty_n,x_n\rangle_A.$
Suppose that $\begin{bmatrix}\alpha e^{ia_n}x_n\\\beta y_n
\end{bmatrix}$ be a sequence in $\mathcal{H}\bigoplus\mathcal{H}.$ We can see that $\left\|\begin{bmatrix}\alpha e^{ia_n}x_n\\\beta y_n
\end{bmatrix}\right\|_\mathbb{A}=1.$ Now,
\begin{align}\label{eqnnn_003.8}
    \left\|\begin{bmatrix} z_1I & T\\O&z_2I
    \end{bmatrix}\right\|_\mathbb{A}&\geq\left\|\begin{bmatrix} 
z_1I & T\\O&z_2I
    \end{bmatrix}\begin{bmatrix}\alpha e^{ia_n}x_n\\\beta y_n
\end{bmatrix}\right\|_\mathbb{A}\nonumber\\
\end{align}
\begin{align}
&=\left\|\begin{bmatrix}\alpha z_1 e^{ia_n}x_n+\beta Ty_n\\\beta z_2 y_n
\end{bmatrix}\right\|_\mathbb{A}\nonumber\\
&=\sqrt{\|\alpha z_1e^{ia_n}x_n+\beta Ty_n\|_A^2+\|\beta z_2 y_n\|_A^2}\nonumber\\
&=\sqrt{\alpha^2|z_1|^2+\beta^2\|Ty_n\|_A^2+2\alpha\beta Re(\overline{z_1}\langle Ty_n,x_n\rangle_A)+\beta^2|z_2|^2}\nonumber\\
&=\sqrt{(\alpha|z_1|+\beta \|T\|_A)^2+\beta^2|z_2|^2}\nonumber\\
&=\left\|\begin{bmatrix}
|z_1|&\|T\|_A\\O&|z_2|
\end{bmatrix}\begin{bmatrix}
\alpha\\\beta
\end{bmatrix}\right\|~~~\mbox{by}~\eqref{eqn_al1}\nonumber\\
&=\left\|\begin{bmatrix}
|z_1|&\|T\|_A\\O&|z_2|
\end{bmatrix}\right\|.
\end{align}
Again, by Lemma 2.1 \cite{Feki1} 
\begin{equation}\label{eqnnn_003.9}
    \left\|\begin{bmatrix}
z_1I& T\\O&z_2I
\end{bmatrix}\right\|_\mathbb{A}\leq\left\|\begin{bmatrix}
|z_1|& \|T\|_A\\O&|z_2|
\end{bmatrix}\right\|.
\end{equation}
From \eqref{eqnnn_003.8} and \eqref{eqnnn_003.9}, we so have $$\left\|\begin{bmatrix}
z_1I& T\\O&z_2I
\end{bmatrix}\right\|_\mathbb{A}=\left\|\begin{bmatrix}
|z_1|& \|T\|_A\\O&|z_2|
\end{bmatrix}\right\|.$$
But 
\begin{align*}
    \left\|\begin{bmatrix}
|z_1|& \|T\|_A\\O&|z_2|
\end{bmatrix}\right\|&=r^{1/2}\left(\begin{bmatrix}
|z_1|& O\\\|T\|_A&|z_2|
\end{bmatrix}\begin{bmatrix}
|z_1|& \|T\|_A\\O&|z_2|
\end{bmatrix}\right) \\
&=r^{1/2}\left(\begin{bmatrix}
|z_1|^2& |z_1|\|T\|_A\\|z_1|\|T\|_A&|z_2|^2+\|T\|_A^2
\end{bmatrix}\right)\\
&=\frac{1}{\sqrt{2}}\sqrt{|z_1|^2+|z_2|^2+\|T\|_A^2+\sqrt{(|z_1|^2+|z_2|^2+\|T\|_A^2)^2-4|z_1|^2|z_2|^2}}.
\end{align*}
Hence, 
$$\left\|\begin{bmatrix}
z_1I& T\\O&z_2I
\end{bmatrix}\right\|_\mathbb{A}=\frac{1}{\sqrt{2}}\sqrt{|z_1|^2+|z_2|^2+\|T\|_A^2+\sqrt{(|z_1|^2+|z_2|^2+\|T\|_A^2)^2-4|z_1|^2|z_2|^2}}.$$
\end{proof}

We recall below a result of \cite{Feki_some} to obtain  Corollary \ref{cor_3.7}. 

\begin{lem}\label{lem0003.7}[Corollary 2.1, \cite{Feki_some}]\\
Let $T\in\mathcal{B}_A(\mathcal{H}).$ Then
$$\frac{1}{2}\sqrt{\|TT^{\#_A}+T^{\#_A}T\|_A+2c_A(T^2)}\leq w_A(T)\leq \frac{1}{2}\sqrt{\|TT^{\#_A}+T^{\#_A}T\|_A+2w_A(T^2)}.$$
\end{lem}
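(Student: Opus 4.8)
The plan is to derive both bounds from the single operator identity, valid for every $\theta\in\mathbb{R}$,
$$\big(Re_A(e^{i\theta}T)\big)^2=\tfrac14\big(TT^{\#_A}+T^{\#_A}T\big)+\tfrac12\,Re_A\!\big(e^{2i\theta}T^2\big),$$
which follows by expanding $Re_A(e^{i\theta}T)=\tfrac12\big(e^{i\theta}T+e^{-i\theta}T^{\#_A}\big)$, squaring, and using $(\lambda S)^{\#_A}=\overline{\lambda}\,S^{\#_A}$, $(T^2)^{\#_A}=(T^{\#_A})^2$, and the additivity of $S\mapsto S^{\#_A}$ recorded in Section~2. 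Pairing this with $x$ satisfying $\|x\|_A=1$ and using $\langle Re_A(S)x,x\rangle_A=\operatorname{Re}\langle Sx,x\rangle_A$ (immediate from $\langle S^{\#_A}x,x\rangle_A=\overline{\langle Sx,x\rangle_A}$) gives the scalar identity
$$\big\langle\big(Re_A(e^{i\theta}T)\big)^2x,x\big\rangle_A=\tfrac14\big\langle(TT^{\#_A}+T^{\#_A}T)x,x\big\rangle_A+\tfrac12\operatorname{Re}\!\big(e^{2i\theta}\langle T^2x,x\rangle_A\big).\qquad(\star)$$
I will also use that $S:=Re_A(e^{i\theta}T)$ is $A$-selfadjoint, hence lies in $\mathcal{B}_A(\mathcal{H})$, satisfies $\|S\|_A=w_A(S)$, and obeys $\langle S^2x,x\rangle_A=\|Sx\|_A^2$, together with the fact that $TT^{\#_A}+T^{\#_A}T$ is $A$-positive.

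For the upper bound, fix $x$ with $\|x\|_A=1$ and choose $\theta$ so that $e^{i\theta}\langle Tx,x\rangle_A=|\langle Tx,x\rangle_A|$; then $\langle Re_A(e^{i\theta}T)x,x\rangle_A=|\langle Tx,x\rangle_A|$, and the Cauchy--Schwarz inequality for $\langle\cdot,\cdot\rangle_A$ gives
$$|\langle Tx,x\rangle_A|^2=\langle Re_A(e^{i\theta}T)x,x\rangle_A^2\le\|Re_A(e^{i\theta}T)x\|_A^2=\big\langle\big(Re_A(e^{i\theta}T)\big)^2x,x\big\rangle_A.$$
Inserting $(\star)$ and bounding $\langle(TT^{\#_A}+T^{\#_A}T)x,x\rangle_A\le\|TT^{\#_A}+T^{\#_A}T\|_A$ and $\operatorname{Re}(e^{2i\theta}\langle T^2x,x\rangle_A)\le|\langle T^2x,x\rangle_A|\le w_A(T^2)$ yields $|\langle Tx,x\rangle_A|^2\le\tfrac14\|TT^{\#_A}+T^{\#_A}T\|_A+\tfrac12w_A(T^2)$; taking the supremum over such $x$ and then a square root gives the right-hand inequality.

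For the lower bound I run the same identity the other way. For any unit $x$ and any $\theta$,
$$\big\langle\big(Re_A(e^{i\theta}T)\big)^2x,x\big\rangle_A=\|Re_A(e^{i\theta}T)x\|_A^2\le\|Re_A(e^{i\theta}T)\|_A^2=w_A\big(Re_A(e^{i\theta}T)\big)^2\le w_A(T)^2,$$
the last step because $|\langle Re_A(e^{i\theta}T)y,y\rangle_A|=|\operatorname{Re}(e^{i\theta}\langle Ty,y\rangle_A)|\le|\langle Ty,y\rangle_A|$ for all $y$. Combining with $(\star)$ gives $w_A(T)^2\ge\tfrac14\langle(TT^{\#_A}+T^{\#_A}T)x,x\rangle_A+\tfrac12\operatorname{Re}(e^{2i\theta}\langle T^2x,x\rangle_A)$ for every $\theta$; for each fixed $x$ I choose $\theta$ with $\operatorname{Re}(e^{2i\theta}\langle T^2x,x\rangle_A)=|\langle T^2x,x\rangle_A|\ge c_A(T^2)$, and then take the supremum over $x$, again using the $A$-positivity of $TT^{\#_A}+T^{\#_A}T$. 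This gives $w_A(T)^2\ge\tfrac14\|TT^{\#_A}+T^{\#_A}T\|_A+\tfrac12c_A(T^2)$, which is the left-hand inequality after multiplying by $4$ and taking a square root.

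I expect nothing here to be genuinely hard: the expansion of $\big(Re_A(e^{i\theta}T)\big)^2$ and the scalar reformulation $(\star)$ are routine. The only thing to be disciplined about is that $T^{\#_A}$ is an adjoint merely relative to the semi-inner product, so every step must pass through $A$-selfadjointness, the identities $\langle Re_A(S)x,x\rangle_A=\operatorname{Re}\langle Sx,x\rangle_A$ and $\langle S^2x,x\rangle_A=\|Sx\|_A^2$ for $A$-selfadjoint $S$, and the basic estimates $\|Sx\|_A\le\|S\|_A\|x\|_A$ and $\langle Bx,x\rangle_A\le\|B\|_A$ for $A$-positive $B$ with $\|x\|_A=1$, rather than through genuine Hilbert-space adjoints; recognizing that $Re_A(e^{i\theta}T)$ is the right object to square is really the only idea.
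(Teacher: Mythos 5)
Your proof is correct. Note, however, that the paper does not prove this statement at all: it is recalled verbatim as Corollary 2.1 of \cite{Feki_some} (Feki's preprint) purely as an ingredient for Corollary \ref{cor_3.7}, so there is no in-paper argument to compare against. Your derivation is a sound, self-contained reconstruction, and it is essentially the standard route (the one used in the cited source, which in turn adapts Kittaneh's classical numerical-radius argument): square $Re_A(e^{i\theta}T)=\tfrac12(e^{i\theta}T+e^{-i\theta}T^{\#_A})$ to get the identity $\big(Re_A(e^{i\theta}T)\big)^2=\tfrac14(TT^{\#_A}+T^{\#_A}T)+\tfrac12 Re_A(e^{2i\theta}T^2)$, pair with unit vectors, and optimize $\theta$ in opposite directions for the two bounds. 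All the semi-Hilbertian caveats you flag are handled correctly ($A$-selfadjointness of $Re_A(e^{i\theta}T)$, $\langle S^2x,x\rangle_A=\|Sx\|_A^2$ for $A$-selfadjoint $S$, $\|B\|_A=\sup_{\|x\|_A=1}\langle Bx,x\rangle_A$ for $A$-positive $B$, and $\|S\|_A=w_A(S)$ for $A$-selfadjoint $S$), so the argument goes through as written.
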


Next, we turn our attention towards a result by Feki \cite{Feki_some} that holds  with the additional assumption  ``$\mathcal{N}(A)^\perp$ is invariant under $T\in\mathcal{B}_A(\mathcal{H}).$''
We prove the same result without this assumption in the following corollary.

\begin{cor}\label{cor_3.7}
Let $T\in\mathcal{B}_A(\mathcal{H}).$ Then
$$2w_A\left(\begin{bmatrix}
I& T\\O&-I
\end{bmatrix}\right)=\left\|\begin{bmatrix}
I& T\\O&-I
\end{bmatrix}\right\|_\mathbb{A}+\left\|\begin{bmatrix}
I& T\\O&-I
\end{bmatrix}\right\|_\mathbb{A}^{-1}.$$
\end{cor}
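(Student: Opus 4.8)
The plan is to reduce the corollary to a scalar computation via Theorem~\ref{thm_00003.8} together with Lemma~\ref{lem0003.7}, applied to the operator matrix $\widetilde{T}=\begin{bmatrix}I&T\\O&-I\end{bmatrix}\in\mathcal{B}_{\mathbb{A}}(\mathcal{H})$. First I would record the three ingredients I need for $\widetilde{T}$: (a) its $\mathbb{A}$-operator norm from Theorem~\ref{thm_00003.8} with $z_1=1$, $z_2=-1$, which gives $\|\widetilde{T}\|_{\mathbb{A}}=\tfrac{1}{\sqrt2}\sqrt{2+\|T\|_A^2+\sqrt{(2+\|T\|_A^2)^2-4}}=\tfrac{1}{\sqrt2}\sqrt{2+\|T\|_A^2+\|T\|_A\sqrt{\|T\|_A^2+4}}$; (b) the square $\widetilde{T}^2=\begin{bmatrix}I&O\\O&I\end{bmatrix}=P_{\overline{\mathcal{R}(\mathbb{A})}}$ viewed in $\mathcal{B}_{\mathbb{A}}(\mathcal{H})$, so that $w_{\mathbb{A}}(\widetilde{T}^2)=c_{\mathbb{A}}(\widetilde{T}^2)=1$; and (c) the quantity $\|\widetilde{T}\widetilde{T}^{\#_{\mathbb{A}}}+\widetilde{T}^{\#_{\mathbb{A}}}\widetilde{T}\|_{\mathbb{A}}$, which by Lemma~\ref{lem_fek} and a direct $2\times2$ block multiplication equals $\left\|\begin{bmatrix}I+TT^{\#_A}&O\\O&I+T^{\#_A}T\end{bmatrix}\right\|_{\mathbb{A}}=1+\max\{\|TT^{\#_A}\|_A,\|T^{\#_A}T\|_A\}=1+\|T\|_A^2$ using Lemma~\ref{l001}(i) and \eqref{ineq0}.

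Next I would feed (b) and (c) into Lemma~\ref{lem0003.7} applied to $\widetilde{T}$ in the semi-Hilbertian space $(\bigoplus\mathcal{H},\mathbb{A})$: since the lower and upper bounds coincide (both $w_{\mathbb{A}}(\widetilde{T}^2)$ and $c_{\mathbb{A}}(\widetilde{T}^2)$ equal $1$), we get the exact value
\begin{equation*}
w_{\mathbb{A}}(\widetilde{T})=\frac{1}{2}\sqrt{\,\|\widetilde{T}\widetilde{T}^{\#_{\mathbb{A}}}+\widetilde{T}^{\#_{\mathbb{A}}}\widetilde{T}\|_{\mathbb{A}}+2\,}=\frac{1}{2}\sqrt{\|T\|_A^2+3}.
\end{equation*}
It then remains to verify the algebraic identity $2\cdot\tfrac12\sqrt{\|T\|_A^2+3}=\|\widetilde{T}\|_{\mathbb{A}}+\|\widetilde{T}\|_{\mathbb{A}}^{-1}$, i.e. that $s:=\|\widetilde{T}\|_{\mathbb{A}}$ satisfies $s+s^{-1}=\sqrt{\|T\|_A^2+3}$. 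Writing $t=\|T\|_A$, from (a) we have $2s^2=2+t^2+t\sqrt{t^2+4}$; a short computation shows $s^2\cdot s^{-2}=1$ forces $s^2$ and $s^{-2}$ to be the two roots of $\lambda^2-(t^2+2)\lambda+1=0$ (the characteristic polynomial of the $2\times2$ matrix in the proof of Theorem~\ref{thm_00003.8} with $\det=|z_1z_2|^2=1$), so $s^2+s^{-2}=t^2+2$ and hence $(s+s^{-1})^2=s^2+s^{-2}+2=t^2+4$. \emph{Wait}---let me re-examine: this gives $s+s^{-1}=\sqrt{t^2+4}$, not $\sqrt{t^2+3}$, so I should instead recompute $w_{\mathbb{A}}(\widetilde{T})^2$ to land on $\tfrac14(t^2+4)$; this means $\|\widetilde{T}\widetilde{T}^{\#_{\mathbb{A}}}+\widetilde{T}^{\#_{\mathbb{A}}}\widetilde{T}\|_{\mathbb{A}}+2w_{\mathbb{A}}(\widetilde{T}^2)$ should be $t^2+4$, which is consistent with $1+t^2$ from (c) plus $2\cdot?$—so I need $w_{\mathbb{A}}(\widetilde{T}^2)=\tfrac{3}{2}$? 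That is impossible since $\widetilde T^2=P$. The resolution is that $\widetilde{T}^{\#_{\mathbb{A}}}\neq\begin{bmatrix}I&O\\O&-I\end{bmatrix}$ in general: by Lemma~\ref{lem_fek}, $\widetilde{T}^{\#_{\mathbb{A}}}=\begin{bmatrix}P_{\overline{\mathcal{R}(A)}}&O\\T^{\#_A}&-P_{\overline{\mathcal{R}(A)}}\end{bmatrix}$, and the careful block products (using \eqref{eqn00_1.8}, \eqref{eqn00_1.9}) must be redone; I expect (c) to become $\max\{1+\|T\|_A^2,\ 1+\|T^{\#_A}\|_A^2+\text{cross terms}\}$, and the off-diagonal interplay is exactly where the factor needs tracking.

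The main obstacle, therefore, is the honest computation of $\widetilde{T}\widetilde{T}^{\#_{\mathbb{A}}}+\widetilde{T}^{\#_{\mathbb{A}}}\widetilde{T}$ as an $\mathbb{A}$-positive $2\times2$ block operator and the evaluation of its $\mathbb{A}$-norm; here the projections $P_{\overline{\mathcal{R}(A)}}$ do not disappear and one must use \eqref{eqn00_1.8}--\eqref{eqn00_1.9} and $\|T^{\#_A}\|_A=\|T\|_A$ repeatedly, then diagonalize or bound the resulting $2\times2$ block Hermitian form. Once that norm is pinned down, the rest is the scalar identity relating $s+s^{-1}$ to the root structure of $\lambda^2-(\|T\|_A^2+2)\lambda+1$, which is routine. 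Throughout, the key point enabling the argument is that $\widetilde{T}^2$ acts as the identity on $\overline{\mathcal{R}(\mathbb{A})}$, so Lemma~\ref{lem0003.7} collapses to an equality, converting the inequality sandwich into the stated exact formula; this is precisely the device that lets us bypass the ``$\mathcal{N}(A)^\perp$ invariant'' hypothesis used in \cite{Feki_some}, since Theorem~\ref{thm_00003.8} and Lemma~\ref{lem0003.7} hold for all $T\in\mathcal{B}_A(\mathcal{H})$ without it.
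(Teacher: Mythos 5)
Your overall strategy is exactly the paper's: apply Lemma~\ref{lem0003.7} to $\widetilde{T}=\begin{bmatrix}I&T\\O&-I\end{bmatrix}$, use $\widetilde{T}^2=I$ to collapse the sandwich to the equality $w_{\mathbb{A}}(\widetilde{T})=\tfrac12\sqrt{\|\widetilde{T}\widetilde{T}^{\#_{\mathbb{A}}}+\widetilde{T}^{\#_{\mathbb{A}}}\widetilde{T}\|_{\mathbb{A}}+2}$, combine with Theorem~\ref{thm_00003.8} for $z_1=1$, $z_2=-1$, and finish with the scalar identity coming from $\lambda^2-(\|T\|_A^2+2)\lambda+1=0$. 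That last scalar step and the norm formula are correct as you state them. However, there is a genuine gap at what you yourself call ``the main obstacle'': you never actually evaluate $\|\widetilde{T}\widetilde{T}^{\#_{\mathbb{A}}}+\widetilde{T}^{\#_{\mathbb{A}}}\widetilde{T}\|_{\mathbb{A}}$. Your first attempt ($1+\|T\|_A^2$, via the naive adjoint) is wrong, you correctly diagnose that the true adjoint is $\widetilde{T}^{\#_{\mathbb{A}}}=\begin{bmatrix}P_{\overline{\mathcal{R}(A)}}&O\\T^{\#_A}&-P_{\overline{\mathcal{R}(A)}}\end{bmatrix}$, and you reverse-engineer that the answer must be $2+\|T\|_A^2$ so that $w_{\mathbb{A}}(\widetilde{T})=\tfrac12\sqrt{\|T\|_A^2+4}$ --- but working backwards from the desired conclusion is not a proof of the norm identity.

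The missing idea is how to handle the off-diagonal block. Carrying out the multiplication, $\widetilde{T}\widetilde{T}^{\#_{\mathbb{A}}}+\widetilde{T}^{\#_{\mathbb{A}}}\widetilde{T}=\begin{bmatrix}2P_{\overline{\mathcal{R}(A)}}+TT^{\#_A}&P_{\overline{\mathcal{R}(A)}}T-TP_{\overline{\mathcal{R}(A)}}\\O&2P_{\overline{\mathcal{R}(A)}}+T^{\#_A}T\end{bmatrix}$: the $(2,1)$ entry cancels outright, but the $(1,2)$ entry $P_{\overline{\mathcal{R}(A)}}T-TP_{\overline{\mathcal{R}(A)}}$ does \emph{not} vanish in general (this is precisely the commutation failure the whole paper is built around), so you cannot read off the $\mathbb{A}$-norm as the maximum of the diagonal norms. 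The paper's device, which you need here, is to replace the block matrix by its $\#_{\mathbb{A}}$-adjoint (legitimate since $\|S\|_{\mathbb{A}}=\|S^{\#_{\mathbb{A}}}\|_{\mathbb{A}}$): by Lemma~\ref{lem_fek} the offending entry becomes $T^{\#_A}P_{\overline{\mathcal{R}(A)}}-P_{\overline{\mathcal{R}(A)}}T^{\#_A}$, which \emph{is} $O$ by \eqref{eqn00_1.8} and \eqref{eqn00_1.9}, leaving a genuinely block-diagonal $\mathbb{A}$-positive operator whose norm is $2+\|T\|_A^2$ by \eqref{ineq0}. Once you insert that adjoint step, your argument closes and coincides with the paper's proof.
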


\begin{proof}
Let $\mathbb{T}=\begin{bmatrix}
I& T\\O&-I
\end{bmatrix}.$
Then $\mathbb{T}^2=\begin{bmatrix}
I&O\\O&I
\end{bmatrix}.$ 
Using Lemma \ref{lem0003.7},  we get 
\begin{equation}\label{eqn0003.8}
    w_\mathbb{A}(\mathbb{T})=\frac{1}{2}\sqrt{\|\mathbb{T}\mathbb{T}^{\#_A}+\mathbb{T}^{\#_A}\mathbb{T}\|_\mathbb{A}+2}.
\end{equation}
From \eqref{eqn0003.8}, we now have 
\begin{align*}
    w_{\mathbb{A}}(\mathbb{T})&=\frac{1}{2}\sqrt{\left\|\begin{bmatrix}
I& T\\O&-I
\end{bmatrix}\begin{bmatrix}
I& T\\O&-I
\end{bmatrix}^{\#_\mathbb{A}}+\begin{bmatrix}
I& T\\O&-I
\end{bmatrix}^{\#_\mathbb{A}}\begin{bmatrix}
I& T\\O&-I
\end{bmatrix}\right\|_\mathbb{A}+2}\\
&=\frac{1}{2}\sqrt{\left\|\begin{bmatrix}
I& T\\O&-I
\end{bmatrix}\begin{bmatrix}
P_{\overline{\mathcal{R}(A)}}& O\\T^{\#_A}&-P_{\overline{\mathcal{R}(A)}}
\end{bmatrix}+\begin{bmatrix}
P_{\overline{\mathcal{R}(A)}}& O\\T^{\#_A}&-P_{\overline{\mathcal{R}(A)}}
\end{bmatrix}\begin{bmatrix}
I& T\\O&-I
\end{bmatrix}\right\|_\mathbb{A}+2}\\
&=\frac{1}{2}\sqrt{\left\|\begin{bmatrix}
P_{\overline{\mathcal{R}(A)}}+ TT^{\#_A}& -TP_{\overline{\mathcal{R}(A)}}\\-T^{\#_A}&P_{\overline{\mathcal{R}(A)}}
\end{bmatrix}+\begin{bmatrix}
P_{\overline{\mathcal{R}(A)}}& P_{\overline{\mathcal{R}(A)}}T \\T^{\#_A}&T^{\#_A}T+P_{\overline{\mathcal{R}(A)}}
\end{bmatrix}\right\|_\mathbb{A}+2}\\
&=\frac{1}{2}\sqrt{\left\|\begin{bmatrix}
2P_{\overline{\mathcal{R}(A)}}+ TT^{\#_A}& -TP_{\overline{\mathcal{R}(A)}}+P_{\overline{\mathcal{R}(A)}}T\\-T^{\#_A}+T^{\#_A}&2P_{\overline{\mathcal{R}(A)}}+T^{\#_A}T
\end{bmatrix}\right\|_\mathbb{A}+2}\\
&=\frac{1}{2}\sqrt{\left\|\begin{bmatrix}
2P_{\overline{\mathcal{R}(A)}}+ TT^{\#_A}& -TP_{\overline{\mathcal{R}(A)}}+P_{\overline{\mathcal{R}(A)}}T\\O&2P_{\overline{\mathcal{R}(A)}}+T^{\#_A}T
\end{bmatrix}\right\|_\mathbb{A}+2}\\
&=\frac{1}{2}\sqrt{\left\|\begin{bmatrix}
2P_{\overline{\mathcal{R}(A)}}+ (T^{\#_A})^{\#_A}T^{\#_A}& O \\ -P_{\overline{\mathcal{R}(A)}}T^{\#_A}+T^{\#_A}P_{\overline{\mathcal{R}(A)}} &2P_{\overline{\mathcal{R}(A)}}+T^{\#_A}(T^{\#_A})^{\#_A}
\end{bmatrix}\right\|_\mathbb{A}+2}\\&~~~~~~~~~~~~~~~~~~~~~~~~~~~~~~~~~~~~~~~~~~~~~~~~~~~~~\mbox{as}~ \|T\|_A=\|T^{\#_A}\|_A ~\mbox{and}~ (P_{\overline{\mathcal{R}(A)}})^{\#_A}=P_{\overline{\mathcal{R}(A)}}\\
\end{align*}
\begin{align*}
&=\frac{1}{2}\sqrt{\left\|\begin{bmatrix}
2P_{\overline{\mathcal{R}(A)}}+ (T^{\#_A})^{\#_A}T^{\#_A}& O  \\ -T^{\#_A}+T^{\#_A} &2P_{\overline{\mathcal{R}(A)}}+T^{\#_A}(T^{\#_A})^{\#_A}
\end{bmatrix}\right\|_\mathbb{A}+2}\\
&=\frac{1}{2}\sqrt{\left\|\begin{bmatrix}
2P_{\overline{\mathcal{R}(A)}}+ (T^{\#_A})^{\#_A}T^{\#_A}& O  \\ O &2P_{\overline{\mathcal{R}(A)}}+T^{\#_A}(T^{\#_A})^{\#_A}
\end{bmatrix}\right\|_\mathbb{A}+2}\\
&=\frac{1}{2}\sqrt{\left\|\begin{bmatrix}
2I^{\#_A}+ (T^{\#_A})^{\#_A}T^{\#_A}& O  \\ O &2I^{\#_A}+T^{\#_A}(T^{\#_A})^{\#_A}
\end{bmatrix}\right\|_\mathbb{A}+2}\\
&=\frac{1}{2}\sqrt{\left\|\begin{bmatrix}
2I+ TT^{\#_A}& O\\O&2I+T^{\#_A}T
\end{bmatrix}\right\|_\mathbb{A}+2}\\
&=\frac{1}{2}\max\{(\|2I+TT^{\#_A}\|_A+2)^{1/2},(\|2I+T^{\#_A}T\|_A+2)^{1/2}\}\\
&=\frac{1}{2}(\|2I+TT^{\#_A}\|_A+2)^{1/2}\\
&=\frac{1}{2}\sqrt{\|T\|_A^2+4}.
\end{align*}
So, we get
\begin{equation}\label{eqn_0003.15}
    w_{\mathbb{A}}\left(\begin{bmatrix}
I& T\\O&-I
\end{bmatrix}\right)=\frac{1}{2}\sqrt{\|T\|_A^2+4}.
\end{equation}
Using Theorem \ref{thm_00003.8},  we also obtain
\begin{equation}\label{eqn_0003.9}
\left\|\begin{bmatrix}
I& T\\O&-I
\end{bmatrix}\right\|_\mathbb{A}^2=\frac{1}{2}\left(2+\|T\|_A^2+\sqrt{\|T\|_A^4+4\|T\|_A^2}\right)=\frac{1}{2}\|T\|_A+\frac{1}{2}\sqrt{\|T\|_A^2+4}.
\end{equation}
Hence,   we arrive at our claim by \eqref{eqn_0003.15} and \eqref{eqn_0003.9}.
\end{proof}

Using Theorem \ref{thm_00003.8},  one can establish Corollary 2.2 \cite{Feki_some} without the assumption  ``$\mathcal{N}(A)^\perp$ is invariant under $T$.''  The same is stated next without the proof.
\begin{cor}
Let $T\in\mathcal{B}_A(\mathcal{H}).$ Then\\
\begin{enumerate}
    \item [\textnormal{(i)}] $\left\|Re_\mathbb{A}\left(\begin{bmatrix}I&T\\O&-I\end{bmatrix}\right)\right\|_\mathbb{A}=w_\mathbb{A}\left(\begin{bmatrix}I&T\\O&-I\end{bmatrix}\right).$\\
    
    \item [\textnormal{(ii)}] $\left\|Im_\mathbb{A}\left(\begin{bmatrix}I&T\\O&-I\end{bmatrix}\right)\right\|_\mathbb{A}=\dfrac{1}{2}\left(\left\|\begin{bmatrix}I&T\\O&-I\end{bmatrix}\right\|_\mathbb{A}-\left\|\begin{bmatrix}I&T\\O&-I\end{bmatrix}\right\|_\mathbb{A}^{-1}\right).$
\end{enumerate}

\end{cor}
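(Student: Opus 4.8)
The plan is to mimic the proof of Corollary~\ref{cor_3.7} one level up, using $Re_\mathbb{A}$ and $Im_\mathbb{A}$ in place of the anticommutator $\mathbb{T}\mathbb{T}^{\#_\mathbb{A}}+\mathbb{T}^{\#_\mathbb{A}}\mathbb{T}$, and absorbing the absence of the hypothesis ``$\mathcal{N}(A)^\perp$ is invariant under $T$'' by passing to the reduced operator $(\mathbb{T}^{\#_\mathbb{A}})^{\#_\mathbb{A}}$. Put $\mathbb{T}=\begin{bmatrix}I&T\\O&-I\end{bmatrix}$ and $P=P_{\overline{\mathcal{R}(A)}}$. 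By Lemma~\ref{lem_fek} and \eqref{eqn00_1.7}, $\mathbb{T}^{\#_\mathbb{A}}=\begin{bmatrix}P&O\\T^{\#_A}&-P\end{bmatrix}$, and a second use of Lemma~\ref{lem_fek} (with $(T^{\#_A})^{\#_A}=PTP$) gives $\tilde{\mathbb{T}}:=(\mathbb{T}^{\#_\mathbb{A}})^{\#_\mathbb{A}}=\begin{bmatrix}P&PTP\\O&-P\end{bmatrix}$. Since $((S^{\#_\mathbb{A}})^{\#_\mathbb{A}})^{\#_\mathbb{A}}=S^{\#_\mathbb{A}}$, the operators $\mathbb{T}$ and $\tilde{\mathbb{T}}$ share the same $\mathbb{A}$-adjoint, so $Re_\mathbb{A}(\mathbb{T})-Re_\mathbb{A}(\tilde{\mathbb{T}})=\tfrac12(\mathbb{T}-\tilde{\mathbb{T}})$ and $Im_\mathbb{A}(\mathbb{T})-Im_\mathbb{A}(\tilde{\mathbb{T}})=\tfrac1{2i}(\mathbb{T}-\tilde{\mathbb{T}})$; moreover $\mathbb{A}(\mathbb{T}-\tilde{\mathbb{T}})\mathbb{A}=\mathbb{A}\mathbb{T}\mathbb{A}-\mathbb{A}\,P_{\overline{\mathcal{R}(\mathbb{A})}}\mathbb{T}P_{\overline{\mathcal{R}(\mathbb{A})}}\,\mathbb{A}=O$, so $\|\mathbb{T}-\tilde{\mathbb{T}}\|_\mathbb{A}=0$. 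Hence $\|Re_\mathbb{A}(\mathbb{T})\|_\mathbb{A}=\|Re_\mathbb{A}(\tilde{\mathbb{T}})\|_\mathbb{A}$ and $\|Im_\mathbb{A}(\mathbb{T})\|_\mathbb{A}=\|Im_\mathbb{A}(\tilde{\mathbb{T}})\|_\mathbb{A}$, and it is enough to compute the right-hand sides. The payoff of this step is that $\tilde{\mathbb{T}}^2=(\tilde{\mathbb{T}}^{\#_\mathbb{A}})^2=P_{\overline{\mathcal{R}(\mathbb{A})}}$ and, because $(\tilde{\mathbb{T}}^{\#_\mathbb{A}})^{\#_\mathbb{A}}=\tilde{\mathbb{T}}$, both $Re_\mathbb{A}(\tilde{\mathbb{T}})$ and $Im_\mathbb{A}(\tilde{\mathbb{T}})$ equal their own $\mathbb{A}$-adjoint.

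\textbf{The two seminorms.} By \eqref{ineq0}, $\|Re_\mathbb{A}(\tilde{\mathbb{T}})\|_\mathbb{A}^2=\|Re_\mathbb{A}(\tilde{\mathbb{T}})^2\|_\mathbb{A}$, where $4\,Re_\mathbb{A}(\tilde{\mathbb{T}})^2=\tilde{\mathbb{T}}^2+(\tilde{\mathbb{T}}^{\#_\mathbb{A}})^2+\tilde{\mathbb{T}}\tilde{\mathbb{T}}^{\#_\mathbb{A}}+\tilde{\mathbb{T}}^{\#_\mathbb{A}}\tilde{\mathbb{T}}$. The first two summands each equal $P_{\overline{\mathcal{R}(\mathbb{A})}}$; multiplying out the cross term $\tilde{\mathbb{T}}\tilde{\mathbb{T}}^{\#_\mathbb{A}}+\tilde{\mathbb{T}}^{\#_\mathbb{A}}\tilde{\mathbb{T}}$ and simplifying with $PT^{\#_A}=T^{\#_A}P=T^{\#_A}$ (that is, \eqref{eqn00_1.8} and \eqref{eqn00_1.9}), its off-diagonal blocks cancel exactly as the entry $-T^{\#_A}+T^{\#_A}$ does in the proof of Corollary~\ref{cor_3.7}, leaving $\mathrm{diag}(2P+PTT^{\#_A},\,2P+T^{\#_A}TP)$. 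Thus $4\,Re_\mathbb{A}(\tilde{\mathbb{T}})^2=\mathrm{diag}(4P+PTT^{\#_A},\,4P+T^{\#_A}TP)$; each diagonal block is $A$-positive with $A$-norm $4+\|T\|_A^2$ (since $\|TT^{\#_A}\|_A=\|T^{\#_A}T\|_A=\|T\|_A^2$ by \eqref{ineq0}), and the $\mathbb{A}$-norm of a block-diagonal matrix is the larger of the two block $A$-norms, so $\|Re_\mathbb{A}(\tilde{\mathbb{T}})\|_\mathbb{A}=\tfrac12\sqrt{\|T\|_A^2+4}$. The same computation with the minus sign gives $4\,Im_\mathbb{A}(\tilde{\mathbb{T}})^2=\mathrm{diag}(PTT^{\#_A},\,T^{\#_A}TP)$, hence $\|Im_\mathbb{A}(\tilde{\mathbb{T}})\|_\mathbb{A}=\tfrac12\|T\|_A$.

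\textbf{Conclusion.} Combining the first value with \eqref{eqn_0003.15} gives $\|Re_\mathbb{A}(\mathbb{T})\|_\mathbb{A}=\tfrac12\sqrt{\|T\|_A^2+4}=w_\mathbb{A}(\mathbb{T})$, which is (i). For (ii), Corollary~\ref{cor_3.7} together with \eqref{eqn_0003.15} gives $\|\mathbb{T}\|_\mathbb{A}+\|\mathbb{T}\|_\mathbb{A}^{-1}=2w_\mathbb{A}(\mathbb{T})=\sqrt{\|T\|_A^2+4}$, while Theorem~\ref{thm_00003.8} shows $\|\mathbb{T}\|_\mathbb{A}\ge 1$; hence $(\|\mathbb{T}\|_\mathbb{A}-\|\mathbb{T}\|_\mathbb{A}^{-1})^2=(\|\mathbb{T}\|_\mathbb{A}+\|\mathbb{T}\|_\mathbb{A}^{-1})^2-4=\|T\|_A^2$, so $\|\mathbb{T}\|_\mathbb{A}-\|\mathbb{T}\|_\mathbb{A}^{-1}=\|T\|_A=2\|Im_\mathbb{A}(\mathbb{T})\|_\mathbb{A}$, which is (ii). The step I expect to be the main obstacle is the reduction: once the invariance hypothesis is dropped, $Re_\mathbb{A}(\mathbb{T})$ is genuinely not $\mathbb{A}$-selfadjoint (because $I^{\#_A}=P\neq I$ in general), so the shortcut $\|Re_\mathbb{A}(\mathbb{T})\|_\mathbb{A}=w_\mathbb{A}(Re_\mathbb{A}(\mathbb{T}))$ is unavailable; it is the passage to $(\mathbb{T}^{\#_\mathbb{A}})^{\#_\mathbb{A}}$, together with the verification that it leaves every $\mathbb{A}$-seminorm and $\mathbb{A}$-adjoint intact, that makes the Moore--Penrose calculus deliver what the invariance condition formerly did.
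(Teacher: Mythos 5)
Your argument is correct, and it is worth noting that the paper does not actually supply a proof here: it only remarks that the corollary ``can be established using Theorem~\ref{thm_00003.8}'' and states it without proof. Theorem~\ref{thm_00003.8} and \eqref{eqn_0003.15} give the values $\left\|\mathbb{T}\right\|_\mathbb{A}=\tfrac{1}{2}\bigl(\|T\|_A+\sqrt{\|T\|_A^2+4}\bigr)$ and $w_\mathbb{A}(\mathbb{T})=\tfrac{1}{2}\sqrt{\|T\|_A^2+4}$, but by themselves they say nothing about $\|Re_\mathbb{A}(\mathbb{T})\|_\mathbb{A}$ or $\|Im_\mathbb{A}(\mathbb{T})\|_\mathbb{A}$, since $Re_\mathbb{A}(\mathbb{T})$ is a full (not upper-triangular) operator matrix and, as you observe, is not $\mathbb{A}$-selfadjoint once the invariance hypothesis is dropped ($I^{\#_A}=P_{\overline{\mathcal{R}(A)}}\neq I$). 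Your two extra ingredients close exactly that gap: the passage to $\tilde{\mathbb{T}}=(\mathbb{T}^{\#_\mathbb{A}})^{\#_\mathbb{A}}$, which differs from $\mathbb{T}$ by an operator of zero $\mathbb{A}$-seminorm (since $\mathbb{A}P_{\overline{\mathcal{R}(\mathbb{A})}}=P_{\overline{\mathcal{R}(\mathbb{A})}}\mathbb{A}=\mathbb{A}$) and whose real and imaginary parts are genuinely equal to their own $\mathbb{A}$-adjoints; and the identity $\|S\|_\mathbb{A}^2=\|S^{\#_\mathbb{A}}S\|_\mathbb{A}=\|S^2\|_\mathbb{A}$ from \eqref{ineq0}, which converts the problem into the same block-diagonal computation (with the off-diagonal cancellations via \eqref{eqn00_1.8}--\eqref{eqn00_1.9}) that the paper carries out inside the proof of Corollary~\ref{cor_3.7}. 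The resulting values $\|Re_\mathbb{A}(\mathbb{T})\|_\mathbb{A}=\tfrac{1}{2}\sqrt{\|T\|_A^2+4}$ and $\|Im_\mathbb{A}(\mathbb{T})\|_\mathbb{A}=\tfrac{1}{2}\|T\|_A$ match the right-hand sides, and your use of $\|\mathbb{T}\|_\mathbb{A}\geq 1$ to fix the sign of $\|\mathbb{T}\|_\mathbb{A}-\|\mathbb{T}\|_\mathbb{A}^{-1}$ in (ii) is the one detail that is easy to overlook. (Minor caveats only: in the $Im$ computation you should note explicitly that $\|PTT^{\#_A}\|_A=\|TT^{\#_A}\|_A$ because $AP=A$, and your appeal to \eqref{eqn_0003.9} should be to the corrected form of that display, whose left-hand exponent is a typo in the paper; neither affects the argument.)
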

The following lemma provides an upper bound for $T\in\mathcal{B}_A(\mathcal{H})$ to prove Theorem \ref{thm3.5}.
\begin{lem}[Theorem 7, \cite{Feki_spect}]\label{lem_03.4}
Let $T\in \mathcal{B}_A(\mathcal{H}).$ Then
$$w_A(T)\leq\frac{1}{2}(\|T\|+\|T^2\|^{1/2}).$$
\end{lem}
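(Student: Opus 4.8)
The statement is the semi-Hilbertian-space analogue of Kittaneh's classical estimate $w(T)\le\frac12(\|T\|+\|T^{2}\|^{1/2})$, understood with $A$-seminorms throughout — which is what is actually needed here, since with the ordinary operator norm the bound already fails, e.g.\ for $A=\operatorname{diag}(1,\varepsilon)$ and $T$ a $2\times2$ nilpotent Jordan block. The plan is to transcribe Kittaneh's proof into the $A$-inner-product setting, replacing $T^{*}$ by the $A$-adjoint $T^{\#_A}$ and inserting $P_{\overline{\mathcal{R}(A)}}$ (via \eqref{eqn00_1.8}--\eqref{eqn00_1.9}) wherever the classical argument tacitly uses that ranges lie in the whole space; this is exactly what makes the hypothesis ``$\mathcal{N}(A)^{\perp}$ invariant under $T$'' unnecessary, in the spirit of the paper.

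\emph{Step 1 (reduce to an $A$-positive operator).} Establish the $A$-version of the mixed Schwarz (Kato) inequality: for $x,y\in\mathcal{H}$, $|\langle Tx,y\rangle_{A}|^{2}\le\big\langle |T|_{A}\,x,x\big\rangle_{A}\big\langle |T^{\#_A}|_{A}\,y,y\big\rangle_{A}$, where $|T|_{A}$ and $|T^{\#_A}|_{A}$ denote the $A$-square roots of the $A$-positive operators $T^{\#_A}T$ and $TT^{\#_A}$. Taking $y=x$ with $\|x\|_{A}=1$ and applying the arithmetic--geometric mean inequality gives $|\langle Tx,x\rangle_{A}|\le\frac12\big\langle(|T|_{A}+|T^{\#_A}|_{A})x,x\big\rangle_{A}$; since $|T|_{A}+|T^{\#_A}|_{A}$ is $A$-positive, passing to the supremum over $A$-unit vectors yields $w_{A}(T)\le\frac12\big\||T|_{A}+|T^{\#_A}|_{A}\big\|_{A}$.

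\emph{Step 2 (evaluate the right-hand side).} One has $\||T|_{A}+|T^{\#_A}|_{A}\|_{A}=\big\|\big[\begin{smallmatrix}|T|_{A}&|T|_{A}^{1/2}|T^{\#_A}|_{A}^{1/2}\\ |T^{\#_A}|_{A}^{1/2}|T|_{A}^{1/2}&|T^{\#_A}|_{A}\end{smallmatrix}\big]\big\|_{\mathbb{A}}$, the $A$-version of the identity $\|Z^{*}Z\|=\|ZZ^{*}\|$ applied to the row $Z=\big[\,|T|_{A}^{1/2}\ \ |T^{\#_A}|_{A}^{1/2}\,\big]$. By the $2\times2$ scalar-domination bound (Lemma~2.1 of \cite{Feki1}, already used in the proof of Theorem~\ref{thm_00003.8}) together with $\||T|_{A}\|_{A}=\||T^{\#_A}|_{A}\|_{A}=\|T\|_{A}$ (from \eqref{ineq0}), this is at most the spectral radius of $\big[\begin{smallmatrix}\|T\|_{A}&c\\ c&\|T\|_{A}\end{smallmatrix}\big]$, namely $\|T\|_{A}+c$, where $c=\||T|_{A}^{1/2}|T^{\#_A}|_{A}^{1/2}\|_{A}$. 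Finally, identify $c=\|T^{2}\|_{A}^{1/2}$: squaring and using \eqref{ineq0} gives $c^{2}=\||T|_{A}^{1/2}|T^{\#_A}|_{A}|T|_{A}^{1/2}\|_{A}$, and via the $A$-polar decomposition $T=U_{A}|T|_{A}$ (so $|T^{\#_A}|_{A}=U_{A}|T|_{A}U_{A}^{\#_A}$) this equals $\|YY^{\#_A}\|_{A}=\|Y\|_{A}^{2}$ with $Y=|T|_{A}^{1/2}U_{A}|T|_{A}^{1/2}$, and $\|Y\|_{A}^{2}=\||T|_{A}U_{A}|T|_{A}\|_{A}=\|T^{2}\|_{A}$ because $T^{2}=U_{A}\big(|T|_{A}U_{A}|T|_{A}\big)$ and $U_{A}$ is $A$-isometric on the relevant range. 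Combining Steps 1 and 2 gives $w_{A}(T)\le\frac12\big(\|T\|_{A}+\|T^{2}\|_{A}^{1/2}\big)$.

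The main obstacle is the final identification $\||T|_{A}^{1/2}|T^{\#_A}|_{A}^{1/2}\|_{A}=\|T^{2}\|_{A}^{1/2}$: dropping it and bounding $c$ trivially only recovers the elementary bound $w_{A}(T)\le\|T\|_{A}$, so the entire refinement rests on this exact cross-term evaluation, which — as in Kittaneh's original argument — requires the $A$-polar decomposition and the precise mapping properties of the $A$-partial isometry $U_{A}$ on $\overline{\mathcal{R}(A)}$. A secondary care point is that the $A$-square roots $|T|_{A}$, $|T^{\#_A}|_{A}$ must be taken $A$-selfadjoint with range inside $\overline{\mathcal{R}(A)}$, so that relations such as $(|T|_{A}^{1/2})^{\#_A}=|T|_{A}^{1/2}$ and \eqref{ineq0} are legitimately available; handling this correctly is precisely where the Moore--Penrose bookkeeping of Section~2 is needed, and where one sees that no invariance assumption on $\mathcal{N}(A)^{\perp}$ is required.
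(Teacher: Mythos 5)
First, note that the paper itself offers no proof of this statement: it is imported verbatim as Theorem 7 of \cite{Feki_spect}, so there is no internal argument to compare yours against. Your reading of the statement is the right one --- the right-hand side should be $\frac{1}{2}(\|T\|_A+\|T^2\|_A^{1/2})$ (the missing subscripts are a typo carried over from the source), and your $2\times 2$ example with $A=\operatorname{diag}(1,\varepsilon)$ and a nilpotent Jordan block correctly shows that the literal version with ordinary operator norms is false.

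As a standalone proof, however, your transcription of Kittaneh's argument has a genuine gap at its central step. The reduction $w_A(T)\le\frac{1}{2}\bigl\||T|_A+|T^{\#_A}|_A\bigr\|_A$ can be salvaged (an $A$-mixed Schwarz inequality is available in the literature), but your evaluation of the cross term $c=\bigl\||T|_A^{1/2}|T^{\#_A}|_A^{1/2}\bigr\|_A$ rests on an $A$-polar decomposition $T=U_A|T|_A$ with $U_A\in\mathcal{B}_A(\mathcal{H})$ an $A$-partial isometry satisfying $|T^{\#_A}|_A=U_A|T|_AU_A^{\#_A}$. Such a decomposition does not exist for a general $T\in\mathcal{B}_A(\mathcal{H})$ as an operator on $\mathcal{H}$ --- this is a known obstruction in semi-Hilbertian space theory, and even the $A$-square roots you invoke are only defined through an auxiliary Hilbert-space construction --- so the step ``$c=\|T^2\|_A^{1/2}$'' is unsupported. (Incidentally, even in the classical case this is only an inequality $c\le\|T^2\|^{1/2}$, not an equality, though the inequality is all that is needed.) The efficient route, and the one used in \cite{Feki_spect}, avoids rebuilding the functional calculus altogether: pass to the canonical representation $T\mapsto\tilde T$ on the Hilbert-space completion of $\mathcal{R}(A^{1/2})$, for which $\|\tilde T\|=\|T\|_A$, $w(\tilde T)=w_A(T)$ and $\widetilde{T^2}=(\tilde T)^2$, and then quote Kittaneh's classical inequality for $\tilde T$. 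That one-line reduction yields the lemma with no invariance hypothesis on $\mathcal{N}(A)^{\perp}$ and no polar decomposition.
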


\begin{thm}\label{thm3.5}
Let $T_1,T_2,T_3,T_4\in \mathcal{B}_A(\mathcal{H})$ and $T=\begin{bmatrix}
T_1&T_2\\T_3&T_4
\end{bmatrix}.$ Then
$$\max\{w_A^{1/2}(T_2T_3),w_A^{1/2}(T_3T_2)\}\leq w_\mathbb{A}\left(\begin{bmatrix}
O&T_2\\T_3&O
\end{bmatrix}\right)\leq \frac{1}{2}(\|T\|_\mathbb{A}+\|T^2\|^{1/2}).$$
\end{thm}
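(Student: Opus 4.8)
The plan is to prove the two inequalities separately. For the \emph{lower bound}, I would apply Lemma~\ref{l001}(ii) to the matrix $\begin{bmatrix} O & T_2 \\ T_3 & O\end{bmatrix}$ — actually, more directly, I would use the block structure together with the square of the off-diagonal matrix. Observe that
\[
\begin{bmatrix} O & T_2 \\ T_3 & O\end{bmatrix}^2 = \begin{bmatrix} T_2T_3 & O \\ O & T_3T_2 \end{bmatrix}.
\]
Since the $A$-numerical radius satisfies $w_A(S^2)\le w_A(S)^2$ for $S\in\mathcal{B}_A(\mathcal{H})$ (this follows from the power inequality, or directly from $|\langle S^2 x,x\rangle_A|\le \|Sx\|_A\|S^{\#_A}x\|_A$ type estimates together with \eqref{ineq1}; in any case $w_{\mathbb{A}}$ inherits this), I would write
\[
\max\{w_A(T_2T_3), w_A(T_3T_2)\} = w_{\mathbb{A}}\!\left(\begin{bmatrix} T_2T_3 & O \\ O & T_3T_2\end{bmatrix}\right) = w_{\mathbb{A}}\!\left(\begin{bmatrix} O & T_2 \\ T_3 & O\end{bmatrix}^2\right) \le w_{\mathbb{A}}\!\left(\begin{bmatrix} O & T_2 \\ T_3 & O\end{bmatrix}\right)^2,
\]
where the first equality is Lemma~\ref{l001}(i) for the diagonal case. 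Taking square roots gives the left-hand inequality.

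For the \emph{upper bound}, the idea is to invoke Lemma~\ref{lem_03.4} applied to the operator $S=\begin{bmatrix} O & T_2 \\ T_3 & O\end{bmatrix}\in\mathcal{B}_{\mathbb{A}}(\mathcal{H})$, which yields
\[
w_{\mathbb{A}}(S) \le \frac{1}{2}\left(\|S\|_{\mathbb{A}} + \|S^2\|^{1/2}\right).
\]
Then I need two comparisons: first, $\|S\|_{\mathbb{A}} \le \|T\|_{\mathbb{A}}$, which is immediate from Lemma~\ref{l001}-type monotonicity of the $\mathbb{A}$-operator seminorm under passing to a submatrix (zeroing out the diagonal blocks cannot increase the norm — this is the analogue of the pinching/compression inequality and follows from the variational formula for $\|\cdot\|_{\mathbb{A}}$). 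Second, I must bound $\|S^2\|$ by $\|T^2\|$. Here $S^2 = \begin{bmatrix} T_2T_3 & O \\ O & T_3T_2\end{bmatrix}$, so $\|S^2\| = \max\{\|T_2T_3\|, \|T_3T_2\|\}$; meanwhile $T^2 = \begin{bmatrix} T_1^2 + T_2T_3 & * \\ * & T_3T_2 + T_4^2\end{bmatrix}$, and one has to argue that the norm of each diagonal block $T_2T_3$, $T_3T_2$ of $S^2$ is dominated by $\|T^2\|$. This is \emph{not} simply a compression of $T^2$, so the cleanest route is probably to note $\|S^2\| = \|S^2\|_{\mathbb{A}}$-type reductions or, better, to observe that $S^2$ is itself a submatrix-type object obtained from $T^2$ by a suitable argument; alternatively one uses $\|T_2T_3\|\le \|T^2\|$ via the fact that $T_2T_3$ sits as the $(1,1)$ block of $\left(\begin{bmatrix} O & T_2 \\ T_3 & O\end{bmatrix}\right)^2$ and appeal to a compression inequality relating it to $T^2$.

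\textbf{Main obstacle.} I expect the delicate point to be the inequality $\|S^2\| \le \|T^2\|$ (equivalently $\max\{\|T_2T_3\|,\|T_3T_2\|\}\le \|T^2\|$). Unlike the passage from $T$ to its off-diagonal part $S$ — where $S$ is a genuine compression-type object so monotonicity of the seminorm applies cleanly — here $S^2$ is \emph{not} obtained from $T^2$ by zeroing blocks, because $T^2$ has the extra terms $T_1^2$ and $T_4^2$ on the diagonal. The resolution I would pursue: write $T^2 = \begin{bmatrix} T_1^2+T_2T_3 & T_1T_2+T_2T_4 \\ T_3T_1+T_4T_3 & T_3T_2+T_4^2\end{bmatrix}$ and realize $\begin{bmatrix} T_2T_3 & O\\ O & T_3T_2\end{bmatrix}$ itself as the off-diagonal-squared part, then invoke Lemma~\ref{l001}(i)/(ii) at the level of $\mathbb{A}$-numerical radius rather than norm — but since Lemma~\ref{lem_03.4} produces the ordinary norm $\|T^2\|$, I would instead use that $\|T_2T_3\| = \|S^2 e_1\|$-localization and bound it by the norm of the relevant $2\times 2$ block operator, or simply cite the standard fact (provable by the same $2\times2$ compression argument used throughout the paper) that for any block operator $T$, the norm of $T_iT_j$ appearing in $(\text{off-diagonal part})^2$ is at most $\|T^2\|$. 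If that fact is not directly available, the fallback is to prove $\|S^2\|\le\|T^2\|$ by choosing approximating unit vectors for $\|T_2T_3\|$ supported in the appropriate coordinate, so that the cross terms and $T_1^2$ contribution either vanish or only add, mirroring the vector-testing technique in the proof of Theorem~\ref{thm_00003.8}.
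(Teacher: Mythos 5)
Your lower bound coincides with the paper's argument: $S^2=\mathrm{diag}(T_2T_3,T_3T_2)$, Lemma~\ref{l001}(i), and the power inequality $w_{\mathbb{A}}(S^2)\le w_{\mathbb{A}}^2(S)$. The upper bound, however, contains a genuine gap, and it sits exactly where you flag your ``main obstacle'': the inequality $\|S^2\|\le\|T^2\|$, i.e.\ $\max\{\|T_2T_3\|,\|T_3T_2\|\}\le\|T^2\|$, is simply false, so no vector-testing or compression argument can rescue it. Take $\mathcal{H}=\mathbb{C}$, $A=1$, $T_1=1$, $T_2=1$, $T_3=-1$, $T_4=-1$: then $T^2=O$ while $T_2T_3=-1$, so $\|S^2\|=1>0=\|T^2\|$. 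The reason is the one you half-identify yourself: the $(1,1)$ block of $T^2$ is $T_1^2+T_2T_3$, not $T_2T_3$, so the diagonal blocks of $S^2$ are not obtained from $T^2$ by any pinching, and there is no ``standard fact'' of the kind you hope to cite.

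The paper sidesteps this entirely by never applying Lemma~\ref{lem_03.4} to $S$. It sets $U=\begin{bmatrix}I&O\\O&-I\end{bmatrix}$, which is $\mathbb{A}$-unitary, observes that $TU-UT=2\begin{bmatrix}O&-T_2\\T_3&O\end{bmatrix}$, and applies the commutator inequality of Lemma~\ref{lemm00001} (after passing to $\mathbb{A}$-adjoints) to get $w_{\mathbb{A}}(TU-UT)\le 2\|U^{\#_{\mathbb{A}}}\|_{\mathbb{A}}\,w_{\mathbb{A}}(T)=2w_{\mathbb{A}}(T)$; only then is Lemma~\ref{lem_03.4} invoked, for the \emph{full} matrix $T$, so that $\|T\|$ and $\|T^2\|$ enter for $T$ itself rather than for $S$. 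Lemma~\ref{lemm_0000}(ii) then removes the sign on $T_2$. If you want to repair your write-up, replace the two separate comparisons $\|S\|_{\mathbb{A}}\le\|T\|_{\mathbb{A}}$ and $\|S^2\|\le\|T^2\|$ by this single commutator step.
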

\begin{proof}
Let $U=\begin{bmatrix}
 I&O\\O&-I
\end{bmatrix}.$ It is easy to see that $U$ is $A$-unitary and
  $TU-UT=2\begin{bmatrix}
O&-T_2\\T_3&O
\end{bmatrix}.$\\
Here, \begin{align}\label{eqn_003.3}
    w_\mathbb{A}(TU\pm UT)&=w_\mathbb{A}(U^{\#_A}T^{\#_A}\pm T^{\#_A}U^{\#_A})~~~~~~~~~~~~~~~\because~w_A(T)=w_A(T^{\#_A})\nonumber\\
    &=w_\mathbb{A}(U^{\#_A}T^{\#_A}\pm T^{\#_A}(U^{\#_A})^{\#_A})~~~~~~~~~~ \because  ~U^{\#_A}=(U^{\#_A})^{\#_A}\nonumber\\
    &\leq 2w_\mathbb{A}(T^{\#_A})\|U^{\#_A}\|_\mathbb{A}~~by~Lemma~\ref{lemm00001}\nonumber\\
    &=2w_\mathbb{A}(T)\nonumber\\
    &\leq \|T\|_\mathbb{A}+\|T^2\|^{1/2}~~~~by~Lemma~\ref{lem_03.4}.
\end{align}
Now,
\begin{align*}
    \max\{w_A(T_2T_3),w_A(T_3T_2)\}&=w_\mathbb{A}\left(\begin{bmatrix}
T_2T_3&O\\O&T_3T_2
\end{bmatrix}\right)\\
&=w_\mathbb{A}\left(\begin{bmatrix}
O&T_2\\T_3&O
\end{bmatrix}\begin{bmatrix}
O&T_2\\T_3&O
\end{bmatrix}\right)\\
&=w_{\mathbb{A}}\left(\begin{bmatrix}
O&T_2\\T_3&O
\end{bmatrix}^2\right)\\
&\leq w_\mathbb{A}^2\left(\begin{bmatrix}
O&T_2\\T_3&O
\end{bmatrix}\right)~~~~~~~~~\because w(T^n)\leq w^n(T).
\end{align*}
Replacing  $T_2$ by $-T_2$, we get
$$\max\{w_A(T_2T_3),w_A(T_3T_2)\}\leq w_\mathbb{A}^2\left(\begin{bmatrix}
O&-T_2\\T_3&O
\end{bmatrix}\right).$$
This implies 
\begin{align*}
    \max\{w_A^{1/2}(T_2T_3),w_A^{1/2}(T_3T_2)\}&\leq w_\mathbb{A}\left(\begin{bmatrix}
O&-T_2\\T_3&O
\end{bmatrix}\right)\\
&=\frac{1}{2}w_\mathbb{A}(TU-UT)\\
&\leq \frac{1}{2}(\|T\|_\mathbb{A}+\|T^2\|^{1/2})~~by~~ \eqref{eqn_003.3}.
\end{align*}
By Lemma \ref{lemm_0000}, we thus obtain $$\max\{w_A^{1/2}(T_2T_3),w_A^{1/2}(T_3T_2)\}\leq w_\mathbb{A}\left(\begin{bmatrix}
O&T_2\\T_3&O
\end{bmatrix}\right)\leq \frac{1}{2}(\|T\|_\mathbb{A}+\|T^2\|^{1/2}).$$
\end{proof}


We generalize some of the results of \cite{Hir_Kit_2011} now.
Using Lemma \ref{lemm_0000}, one can now prove Corollary 3.3 \cite{PINTU}  without assuming the condition $A>0$, and is stated next.

\begin{lem}\label{lem_00002}
Let $T,S,X,Y\in\mathcal{B}_A(\mathcal{H}).$ Then
$$w_A(TXS^{\#_A}\pm SYT^{\#_A})\leq 2\|T\|_A\|S\|_Aw_{\mathbb{A}}\begin{bmatrix}
O&X\\Y&O\end{bmatrix}.$$
In particular, putting $Y=X$
$$w_A(TXS^{\#_A}\pm SXT^{\#_A})\leq 2\|T\|_A\|S\|_Aw_A(X).$$
\end{lem}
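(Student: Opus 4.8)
The plan is to reduce, by homogeneity, to the normalized case $\|T\|_A=\|S\|_A=1$, then to display $TXS^{\#_A}\pm SYT^{\#_A}$ as the off-diagonal corners of a conjugated $2\times 2$ operator matrix, and finally to bound the $A$-numerical radius of a sum (or difference) of corners by the $\mathbb{A}$-numerical radius of the associated off-diagonal block matrix. First I would dispose of the degenerate case: if $\|T\|_A=0$ or $\|S\|_A=0$, then $\|T^{\#_A}\|_A=\|T\|_A$ and $\|S^{\#_A}\|_A=\|S\|_A$, so submultiplicativity of $\|\cdot\|_A$ on $\mathcal{B}_A(\mathcal{H})$ forces $\|TXS^{\#_A}\|_A=\|SYT^{\#_A}\|_A=0$, and since $w_A(\cdot)\le\|\cdot\|_A$ the left-hand side vanishes. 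Otherwise set $T_0=T/\|T\|_A$ and $S_0=S/\|S\|_A$, so that $\|T_0\|_A=\|S_0\|_A=1$, $T_0^{\#_A}=T^{\#_A}/\|T\|_A$, $S_0^{\#_A}=S^{\#_A}/\|S\|_A$, and $TXS^{\#_A}\pm SYT^{\#_A}=\|T\|_A\|S\|_A\bigl(T_0XS_0^{\#_A}\pm S_0YT_0^{\#_A}\bigr)$. By positive homogeneity of $w_A$ it then suffices to prove $w_A(T_0XS_0^{\#_A}\pm S_0YT_0^{\#_A})\le 2\,w_{\mathbb{A}}\left(\begin{bmatrix}O&X\\Y&O\end{bmatrix}\right)$.

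Next I would introduce the block operators $\mathbb{T}=\begin{bmatrix}T_0&O\\O&S_0\end{bmatrix}$ and $\mathbb{X}=\begin{bmatrix}O&X\\Y&O\end{bmatrix}$, which lie in $\mathcal{B}_{\mathbb{A}}(\mathcal{H})$ with $\mathbb{T}^{\#_{\mathbb{A}}}=\begin{bmatrix}T_0^{\#_A}&O\\O&S_0^{\#_A}\end{bmatrix}$ by Lemma \ref{lem_fek}. A routine block multiplication gives $\mathbb{T}\,\mathbb{X}\,\mathbb{T}^{\#_{\mathbb{A}}}=\begin{bmatrix}O&T_0XS_0^{\#_A}\\S_0YT_0^{\#_A}&O\end{bmatrix}$. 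Since $\|\mathbb{T}(u,v)\|_{\mathbb{A}}^2=\|T_0u\|_A^2+\|S_0v\|_A^2\le\|(u,v)\|_{\mathbb{A}}^2$, we have $\|\mathbb{T}^{\#_{\mathbb{A}}}\|_{\mathbb{A}}=\|\mathbb{T}\|_{\mathbb{A}}\le 1$, and then from $\langle\mathbb{T}\mathbb{X}\mathbb{T}^{\#_{\mathbb{A}}}\xi,\xi\rangle_{\mathbb{A}}=\langle\mathbb{X}(\mathbb{T}^{\#_{\mathbb{A}}}\xi),\mathbb{T}^{\#_{\mathbb{A}}}\xi\rangle_{\mathbb{A}}$ together with $|\langle\mathbb{X}\eta,\eta\rangle_{\mathbb{A}}|\le w_{\mathbb{A}}(\mathbb{X})\|\eta\|_{\mathbb{A}}^2$ one obtains $w_{\mathbb{A}}(\mathbb{T}\mathbb{X}\mathbb{T}^{\#_{\mathbb{A}}})\le\|\mathbb{T}^{\#_{\mathbb{A}}}\|_{\mathbb{A}}^2\,w_{\mathbb{A}}(\mathbb{X})\le w_{\mathbb{A}}\left(\begin{bmatrix}O&X\\Y&O\end{bmatrix}\right)$.

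It then remains to compare $w_A(B\pm C)$ with $w_{\mathbb{A}}\left(\begin{bmatrix}O&B\\C&O\end{bmatrix}\right)$ for $B=T_0XS_0^{\#_A}$ and $C=S_0YT_0^{\#_A}$. Evaluating the quadratic form of $\begin{bmatrix}O&B\\C&O\end{bmatrix}$ at the unit vectors $\frac{1}{\sqrt{2}}(u,u)$ with $\|u\|_A=1$ gives $w_{\mathbb{A}}\left(\begin{bmatrix}O&B\\C&O\end{bmatrix}\right)\ge\frac{1}{2}\,w_A(B+C)$; applying this instead to $\begin{bmatrix}O&B\\-C&O\end{bmatrix}$ and using Lemma \ref{lemm_0000}(ii) (with $e^{i\theta}=-1$) to identify its $\mathbb{A}$-numerical radius with that of $\begin{bmatrix}O&B\\C&O\end{bmatrix}$ gives $w_{\mathbb{A}}\left(\begin{bmatrix}O&B\\C&O\end{bmatrix}\right)\ge\frac{1}{2}\,w_A(B-C)$. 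Hence $w_A(B\pm C)\le 2\,w_{\mathbb{A}}(\mathbb{T}\mathbb{X}\mathbb{T}^{\#_{\mathbb{A}}})\le 2\,w_{\mathbb{A}}\left(\begin{bmatrix}O&X\\Y&O\end{bmatrix}\right)$, and undoing the rescaling of the first paragraph yields the main inequality. For the particular case $Y=X$, one replaces the block matrix by $\begin{bmatrix}O&X\\X&O\end{bmatrix}$ and uses the special case $w_{\mathbb{A}}\left(\begin{bmatrix}O&X\\X&O\end{bmatrix}\right)=w_A(X)$ of Lemma \ref{lemm_0000}(iii).

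The step that genuinely requires care is the initial reduction: applying the conjugation estimate directly to $\mathbb{T}\mathbb{X}\mathbb{T}^{\#_{\mathbb{A}}}$ would only produce the factor $\|\mathbb{T}\|_{\mathbb{A}}^2=\max\{\|T\|_A^2,\|S\|_A^2\}$, which is in general strictly larger than the desired $\|T\|_A\|S\|_A$. It is exactly the prior normalization $\|T_0\|_A=\|S_0\|_A=1$ that collapses $\max\{\|T_0\|_A^2,\|S_0\|_A^2\}$ to $\|T_0\|_A\|S_0\|_A$; everything else is routine block algebra combined with the quoted lemmas.
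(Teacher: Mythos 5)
Your proof is correct. Be aware, though, that the paper offers no proof of this lemma at all: it only remarks that, once Lemma \ref{lemm_0000} is available for a merely positive $A$, Corollary 3.3 of \cite{PINTU} carries over, so there is no argument in the paper to compare against line by line. Your derivation fills that gap, and it does so by a decomposition different from the row--column factorization $\begin{bmatrix}T & S\end{bmatrix}\begin{bmatrix}O&X\\Y&O\end{bmatrix}\begin{bmatrix}T^{\#_A}\\ S^{\#_A}\end{bmatrix}$ traditionally used for such commutator-type bounds: you conjugate the off-diagonal block $\begin{bmatrix}O&X\\Y&O\end{bmatrix}$ by the diagonal contraction $\mathbb{T}=\operatorname{diag}(T_0,S_0)$, which keeps everything inside the block algebra where Lemma \ref{lem_fek} gives the adjoint and \eqref{ineq0} gives $\|\mathbb{T}^{\#_{\mathbb{A}}}\|_{\mathbb{A}}=\|\mathbb{T}\|_{\mathbb{A}}\le 1$, and you then recover $B\pm C$ from the corners via the test vectors $\tfrac{1}{\sqrt2}(u,u)$ together with Lemma \ref{lemm_0000}(ii) and, for $Y=X$, Lemma \ref{lemm_0000}(iii). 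Each step checks out, including the semi-inner-product Cauchy--Schwarz treatment of vectors with $\|\eta\|_{\mathbb{A}}=0$ and the degenerate case $\|T\|_A\|S\|_A=0$; and your closing observation is exactly the point a careless reader would miss, namely that the prior normalization $\|T_0\|_A=\|S_0\|_A=1$ is what turns the naive factor $\max\{\|T\|_A^2,\|S\|_A^2\}$ from the conjugation estimate into the product $\|T\|_A\|S\|_A$ asserted in the statement.
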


Considering $X=Y=Q$ and $T=I$ in  Lemma \ref{lem_00002}, we get Lemma \ref{lemm00001}, which is stated below.  
\begin{cor}\label{cor_pin_modified}
Let $Q,S\in\mathcal{B}_A(\mathcal{H}).$ Then
$$w_A(QS^{\#_A}\pm SQ)\leq 2\|S\|_Aw_A(Q).$$
\end{cor}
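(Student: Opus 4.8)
The plan is to deduce the statement from Lemma~\ref{lem_00002} by specialising $T = I$, $X = Y = Q$, and then to repair the one discrepancy this substitution introduces. Since $I \in \mathcal{B}_A(\mathcal{H})$, $\|I\|_A = 1$ (the degenerate case $A = O$ being trivial, as then $w_A \equiv 0 \equiv \|\cdot\|_A$), and $I^{\#_A} = P_{\overline{\mathcal{R}(A)}}$ by \eqref{eqn00_1.7}, the ``in particular'' form of Lemma~\ref{lem_00002} (the case $Y = X$) gives at once
$$w_A\bigl(QS^{\#_A} \pm SQ\,P_{\overline{\mathcal{R}(A)}}\bigr) \le 2\|I\|_A\|S\|_A\, w_A(Q) = 2\|S\|_A\, w_A(Q).$$
The only gap between this and the claimed inequality is the projection $P_{\overline{\mathcal{R}(A)}}$ appearing on the right of the term $SQ$ — precisely the spot where the dropped hypothesis ``$A > 0$'' (equivalently $P_{\overline{\mathcal{R}(A)}} = I$) would make the two expressions literally coincide. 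So the real content is to show this projection is harmless.

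To that end I would first record the auxiliary fact that $w_A\bigl(M(I - P_{\overline{\mathcal{R}(A)}})\bigr) = 0$ for every $M \in \mathcal{B}_A(\mathcal{H})$. Indeed, $P_{\overline{\mathcal{R}(A)}} = I^{\#_A}$ lies in $\mathcal{B}_A(\mathcal{H})$, so $M(I - P_{\overline{\mathcal{R}(A)}}) \in \mathcal{B}_A(\mathcal{H})$ because $\mathcal{B}_A(\mathcal{H})$ is a subalgebra; moreover $\mathcal{R}(A) \subseteq \overline{\mathcal{R}(A)}$ forces $P_{\overline{\mathcal{R}(A)}}A = A$, hence $(I - P_{\overline{\mathcal{R}(A)}})A = O$ and therefore $A\,[\,M(I - P_{\overline{\mathcal{R}(A)}})\,]\,A = AM\,(I - P_{\overline{\mathcal{R}(A)}})A = O$; this says $\|M(I - P_{\overline{\mathcal{R}(A)}})\|_A = 0$, and the bound $w_A \le \|\cdot\|_A$ from \eqref{ineq1} gives the claim.

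Applying this with $M = SQ \in \mathcal{B}_A(\mathcal{H})$ and using the subadditivity of $w_A$ (immediate from its definition as a supremum of $|\langle \cdot\,x,x\rangle_A|$ together with $w_A(-\,\cdot) = w_A(\cdot)$), the decomposition
$$QS^{\#_A} \pm SQ = \bigl(QS^{\#_A} \pm SQ\,P_{\overline{\mathcal{R}(A)}}\bigr) \pm SQ\,(I - P_{\overline{\mathcal{R}(A)}})$$
yields $w_A\bigl(QS^{\#_A} \pm SQ\bigr) \le w_A\bigl(QS^{\#_A} \pm SQ\,P_{\overline{\mathcal{R}(A)}}\bigr) + w_A\bigl(SQ(I - P_{\overline{\mathcal{R}(A)}})\bigr) = w_A\bigl(QS^{\#_A} \pm SQ\,P_{\overline{\mathcal{R}(A)}}\bigr)$, and combining with the first display completes the proof; both sign choices go through identically.

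The main (and essentially only) obstacle is this bookkeeping around $I^{\#_A} = P_{\overline{\mathcal{R}(A)}} \neq I$: without strict positivity of $A$ one cannot just plug $T = I$ into Lemma~\ref{lem_00002} and read off the result, and the vanishing fact $w_A\bigl(M(I - P_{\overline{\mathcal{R}(A)}})\bigr) = 0$ is what bridges the gap. An equally short alternative to the subadditivity step is to observe that $SQ$ and $SQ\,P_{\overline{\mathcal{R}(A)}}$ induce the same form $x \mapsto \langle\,\cdot\,x,x\rangle_A$: from $AT^{\#_A} = T^*A$ one gets $AM = (M^{\#_A})^{*}A$, so $\mathcal{N}(A)$ is invariant under $M = SQ$, whence $\langle SQ(I - P_{\overline{\mathcal{R}(A)}})x,\, x\rangle_A = 0$ for all $x$ and the operators $QS^{\#_A} \pm SQ$ and $QS^{\#_A} \pm SQ\,P_{\overline{\mathcal{R}(A)}}$ have the same $A$-numerical radius; either route is acceptable.
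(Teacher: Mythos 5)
Your proof is correct and follows the paper's own route: the paper obtains this corollary in exactly the same way, by setting $T=I$ and $X=Y=Q$ in Lemma~\ref{lem_00002}, and offers no further argument. The extra step you supply --- replacing $SQ\,I^{\#_A}=SQ\,P_{\overline{\mathcal{R}(A)}}$ by $SQ$ via the observation that $w_A\bigl(SQ(I-P_{\overline{\mathcal{R}(A)}})\bigr)=0$ --- is a correct and genuinely needed repair of a detail the paper silently elides (it treats $I^{\#_A}$ as if it were $I$), so your write-up is in fact more complete than the original.
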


It is well known  that  $P_{\overline{R(A)}}T\neq TP_{\overline{R(A)}}$ for  $T\in\mathcal{B}_A(\mathcal{H})$ (even if $A$ and $T$ are finite matrices). 
 And the equality holds if $\mathcal{N}(A)^\perp$ is invariant under $T.$ The following result shows that  $w_A(P_{\overline{R(A)}}T)$ and $w_A(TP_{\overline{R(A)}})$ are same  for any $T\in\mathcal{B}_A(\mathcal{H})$ even though $\mathcal{N}(A)^\perp$ is not invariant under $T.$

\begin{thm}
 $w_A(P_{\overline{\mathcal{R}(A)}}T)=w_A(TP_{\overline{\mathcal{R}(A)}})=w_A(T)$ for any $T\in\mathcal{B}_A(\mathcal{H}).$
\end{thm}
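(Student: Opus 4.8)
The plan is to reduce the claim to the identity $w_A(T) = w_A(T^{\#_A})$ from \eqref{eqn_00000001.4}, together with the projection identities \eqref{eqn00_1.8} and \eqref{eqn00_1.9}. Recall that the $A$-numerical radius only ever tests vectors $x$ with $\|x\|_A = 1$, and for such computations we may freely replace $x$ by $P_{\overline{\mathcal{R}(A)}}x$: indeed $\langle (P_{\overline{\mathcal{R}(A)}}T)x, x\rangle_A = \langle AP_{\overline{\mathcal{R}(A)}}Tx, x\rangle = \langle ATx, x\rangle = \langle Tx, x\rangle_A$ because $AP_{\overline{\mathcal{R}(A)}} = A$ (as $I^{\#_A} = P_{\overline{\mathcal{R}(A)}}$ and $A I^{\#_A} = A$, equivalently $\mathcal{N}(A)^\perp = \overline{\mathcal{R}(A^*)}$ contains the adjoint action trivially on the $A$-form). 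Hence $\langle (P_{\overline{\mathcal{R}(A)}}T)x,x\rangle_A = \langle Tx,x\rangle_A$ for all $x$, and taking the supremum of moduli over $\|x\|_A = 1$ gives $w_A(P_{\overline{\mathcal{R}(A)}}T) = w_A(T)$ immediately.

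For the other equality, I would use the $A$-adjoint. Since $T \in \mathcal{B}_A(\mathcal{H})$ we also have $TP_{\overline{\mathcal{R}(A)}} = TI^{\#_A} \in \mathcal{B}_A(\mathcal{H})$, so \eqref{eqn_00000001.4} applies:
\begin{align*}
w_A(TP_{\overline{\mathcal{R}(A)}}) &= w_A\big((TP_{\overline{\mathcal{R}(A)}})^{\#_A}\big) = w_A\big(P_{\overline{\mathcal{R}(A)}}^{\#_A} T^{\#_A}\big) = w_A\big(P_{\overline{\mathcal{R}(A)}}\, T^{\#_A}\big),
\end{align*}
using $(TS)^{\#_A} = S^{\#_A}T^{\#_A}$ and $(P_{\overline{\mathcal{R}(A)}})^{\#_A} = P_{\overline{\mathcal{R}(A)}}$ (this last is $(I^{\#_A})^{\#_A} = P_{\overline{\mathcal{R}(A)}}I P_{\overline{\mathcal{R}(A)}} = P_{\overline{\mathcal{R}(A)}}$, noted in the Preliminaries). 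Now apply the first equality already proved, but with $T$ replaced by $T^{\#_A} \in \mathcal{B}_A(\mathcal{H})$: $w_A(P_{\overline{\mathcal{R}(A)}} T^{\#_A}) = w_A(T^{\#_A})$. Finally $w_A(T^{\#_A}) = w_A(T)$ by \eqref{eqn_00000001.4} once more, and chaining these gives $w_A(TP_{\overline{\mathcal{R}(A)}}) = w_A(T) = w_A(P_{\overline{\mathcal{R}(A)}}T)$.

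The only point requiring care — and the step I expect to be the main obstacle to write cleanly — is justifying $\langle (P_{\overline{\mathcal{R}(A)}}T)x, x\rangle_A = \langle Tx, x\rangle_A$, i.e. the fact $AP_{\overline{\mathcal{R}(A)}} = A$. This follows from $P_{\overline{\mathcal{R}(A)}} = A^\dagger A$ together with $A A^\dagger A = A$ (axiom (1) of the Moore–Penrose inverse), so $AP_{\overline{\mathcal{R}(A)}} = A A^\dagger A = A$; alternatively one observes $(I - P_{\overline{\mathcal{R}(A)}})$ maps into $\mathcal{N}(A^*) = \mathcal{N}(A)$ since $A$ is selfadjoint, hence $A(I-P_{\overline{\mathcal{R}(A)}}) = O$. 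Everything else is bookkeeping with the already-recorded properties of $\#_A$, so the proof is short once this identity is in hand.
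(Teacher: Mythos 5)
Your argument is correct, but your treatment of the first equality genuinely differs from the paper's. The paper proves both equalities by one and the same $\#_A$-duality device: $w_A(P_{\overline{\mathcal{R}(A)}}T)=w_A\bigl((P_{\overline{\mathcal{R}(A)}}T)^{\#_A}\bigr)=w_A\bigl(T^{\#_A}P_{\overline{\mathcal{R}(A)}}\bigr)$, which collapses to $w_A(T^{\#_A})=w_A(T)$ by the absorption identity \eqref{eqn00_1.8}, and symmetrically for $w_A(TP_{\overline{\mathcal{R}(A)}})$ via \eqref{eqn00_1.9}. You instead prove $w_A(P_{\overline{\mathcal{R}(A)}}T)=w_A(T)$ directly from the pointwise identity $\langle P_{\overline{\mathcal{R}(A)}}Tx,x\rangle_A=\langle Tx,x\rangle_A$, i.e.\ from $AP_{\overline{\mathcal{R}(A)}}=AA^\dagger A=A$; this is more elementary, bypasses both \eqref{eqn_00000001.4} and \eqref{eqn00_1.8} for that half, and in fact yields $w_A(P_{\overline{\mathcal{R}(A)}}S)=w_A(S)$ for \emph{every} bounded $S$, not only for $S\in\mathcal{B}_A(\mathcal{H})$. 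For the second equality you then follow essentially the paper's route --- pass to the $A$-adjoint, reverse the product, use $(P_{\overline{\mathcal{R}(A)}})^{\#_A}=P_{\overline{\mathcal{R}(A)}}$ --- except that where the paper invokes \eqref{eqn00_1.9} you feed $T^{\#_A}$ back into your first part; the two steps are interchangeable. One cosmetic remark: the parenthetical justification of $AP_{\overline{\mathcal{R}(A)}}=A$ in your first paragraph is garbled, but the clean argument you supply at the end ($P_{\overline{\mathcal{R}(A)}}=A^\dagger A$ by Moore--Penrose axiom (3) together with $AA^\dagger A=A$, or equivalently $A(I-P_{\overline{\mathcal{R}(A)}})=O$ because $A$ is selfadjoint) is exactly what is needed, so the proof stands.
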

\begin{proof}
\begin{align}\label{eqn_100007}
 	    w_A(P_{\overline{R(A)}}T)&=w_A((P_{\overline{R(A)}}T)^{\#_A})~~~ ~~(\because~ w_A(T)=w_A(T^{\#_A}))\nonumber\\
 	    &=w_A(T^{\#_A}P_{\overline{\mathcal{R}(A)}})~~~~~~~~(\because~ (TS)^{\#_A}=S^{\#_A}T^{\#_A}~  \&~ (P_{\overline{\mathcal{R}(A)}})^{\#_A}=P_{\overline{\mathcal{R}(A)}} )\nonumber\\
 	    &=w_A(T^{\#_A})~~~~~~~~~~~~~~~~~by~\eqref{eqn00_1.8}\nonumber\\
 	    &=w_A(T).
 	\end{align}  
Again,
 	\begin{align}\label{eqn_100008}
 	     w_A(TP_{\overline{R(A)}})&=w_A((TP_{\overline{R(A)}})^{\#_A}) ~~~ ~~(\because~ w_A(T)=w_A(T^{\#_A}))\nonumber\\
 	    &=w_A(P_{\overline{\mathcal{R}(A)}}T^{\#_A})~~~~~~~~(\because~ (TS)^{\#_A}=S^{\#_A}T^{\#_A} ~\&~ ~(P_{\overline{\mathcal{R}(A)}})^{\#_A}=P_{\overline{\mathcal{R}(A)}} )\nonumber\\
 	    &=w_A(T^{\#_A})~~~~~~~~~~~~~~~~~by~\eqref{eqn00_1.9}\nonumber\\
 	    &=w_A(T).
 	\end{align}
 We therefore have
 \begin{equation*}
 	    w_A(P_{\overline{\mathcal{R(A)}}}T)=w_A(TP_{\overline{\mathcal{R(A)}}})=w_A(T).
 	\end{equation*}
\end{proof}

Feki and Sahoo \cite{FekSat} established many results on $A$-numerical radius inequalities of $2\times 2 $ operator matrices, very recently. In many cases, they assumed the condition ``$\mathcal{N}(A)^\perp$ is invariant under  $T_1,T_2,T_3,T_4$''  to show their claim. They assumed these conditions in order to get the equality $P_{\overline{R(A)}}T= TP_{\overline{R(A)}}$ which is not true, in general.    One of the objective of this paper is to achieve the same claim without assuming the additional condition  ``$\mathcal{N}(A)^\perp$ is invariant under $T_1,T_2,T_3,T_4\in\mathcal{B}_A(\mathcal{H})$''.  
The next result is in this direction, and is more general than Theorem 2.7 \cite{FekSat}.    Our proof is also completely different than the corresponding proof in \cite{FekSat}. And, therefore our results are superior to those results in \cite{FekSat} and  \cite{Feki_some} that assumes the invariant condition.

\begin{thm}
Let $T_1,T_2,T_3,T_4\in \mathcal{B}_A(\mathcal{H}).$ Then $w_\mathbb{A}\left(\begin{bmatrix}
T_1 &T_2\\ T_3& T_4
\end{bmatrix}\right)\geq \frac{1}{2}\max\{\alpha, \beta\},$
where $\alpha=\max\{ w_A(T_1+T_2+T_3+T_4),~w_A(T_1+T_4-T_2-T_3)\}$
and $\beta=\max\{w_A(T_1+T_4+i(T_2-T_3)),~w_A(T_1+T_4-i(T_2-T_4))\}.$
\end{thm}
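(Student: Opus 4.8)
The plan is to exploit the block-diagonal/anti-diagonal unitaries that make $2\times 2$ operator matrices symmetric, combine them with the compression inequalities of Lemma \ref{l001}(i), and then read off the scalar $A$-numerical radii using parts (iii) of Lemma \ref{lemm_0000} and Lemma \ref{l002}. Concretely, set $\mathbb{T}=\begin{bmatrix}T_1&T_2\\ T_3&T_4\end{bmatrix}$ and let $U=\tfrac{1}{\sqrt 2}\begin{bmatrix}I&I\\ I&-I\end{bmatrix}$, which is (a compression of) an $\mathbb{A}$-unitary on $\mathcal{H}\oplus\mathcal{H}$ since $U^{\#_\mathbb{A}}=\tfrac{1}{\sqrt 2}\begin{bmatrix}P_{\overline{\mathcal{R}(A)}}&P_{\overline{\mathcal{R}(A)}}\\ P_{\overline{\mathcal{R}(A)}}&-P_{\overline{\mathcal{R}(A)}}\end{bmatrix}$ and one checks $\|Ux\|_\mathbb{A}=\|U^{\#_\mathbb{A}}x\|_\mathbb{A}=\|x\|_\mathbb{A}$. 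By $A$-unitary invariance of the $\mathbb{A}$-numerical radius, $w_\mathbb{A}(\mathbb{T})=w_\mathbb{A}(U^{\#_\mathbb{A}}\mathbb{T}U)$, and a direct block computation gives
\[
U^{\#_\mathbb{A}}\mathbb{T}U=\frac12\begin{bmatrix}
(T_1+T_2+T_3+T_4)\,P & *\\
* & (T_1-T_2-T_3+T_4)\,P
\end{bmatrix},
\]
where $P=P_{\overline{\mathcal{R}(A)}}$ (the trailing $P$'s come from $U^{\#_\mathbb{A}}$, and since $w_A(SP)=w_A(S)$ by the preceding theorem they do not affect the radii). Applying Lemma \ref{l001}(i) to this matrix yields $w_\mathbb{A}(\mathbb{T})\ge\tfrac12\max\{w_A(T_1+T_2+T_3+T_4),\,w_A(T_1+T_4-T_2-T_3)\}=\tfrac12\alpha$.

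For the bound $w_\mathbb{A}(\mathbb{T})\ge\tfrac12\beta$ I would repeat the argument with the $\mathbb{A}$-unitary $V=\tfrac{1}{\sqrt 2}\begin{bmatrix}I&iI\\ I&-iI\end{bmatrix}$ (equivalently, pre/post-compose the previous $U$ with the diagonal $\mathbb{A}$-unitary $\mathrm{diag}(I,iI)$, whose effect is exactly to send $T_2\mapsto iT_2$, $T_3\mapsto -iT_3$ inside $\mathbb{T}$ up to $A$-unitary equivalence). Carrying out the block multiplication $V^{\#_\mathbb{A}}\mathbb{T}V$ produces diagonal entries $\tfrac12(T_1+T_4+i(T_2-T_3))P$ and $\tfrac12(T_1+T_4-i(T_2-T_3))P$; Lemma \ref{l001}(i) again (together with $w_A(SP)=w_A(S)$) gives $w_\mathbb{A}(\mathbb{T})\ge\tfrac12\beta$. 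Taking the maximum over the two cases finishes the proof. (The stray index in the statement's $\beta$, where one $T_2-T_4$ appears instead of $T_2-T_3$, is evidently a typo and the computation above fixes the intended expression.)

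Alternatively, and perhaps more cleanly for the write-up, one can avoid constructing new $\mathbb{A}$-unitaries from scratch by using Lemma \ref{lemm_0000}(iii) and Lemma \ref{l002} directly: first use Lemma \ref{l001}(i) to compare $w_\mathbb{A}(\mathbb{T})$ with the $\mathbb{A}$-radius of a suitable $2\times2$ matrix of the symmetric form $\begin{bmatrix}S_1&S_2\\ S_2&S_1\end{bmatrix}$ or $\begin{bmatrix}S_2&-S_1\\ S_1&S_2\end{bmatrix}$ obtained by an $\mathbb{A}$-unitary similarity of $\mathbb{T}$, and then invoke those lemmas to evaluate that radius as $\max\{w_A(S_1+S_2),w_A(S_1-S_2)\}$, respectively $\max\{w_A(S_1+iS_2),w_A(S_1-iS_2)\}$. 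I expect the main obstacle to be purely bookkeeping: one must be careful that the relevant $U$ really is $\mathbb{A}$-unitary in the semi-Hilbertian sense (so that $U^{\#_\mathbb{A}}$ carries the projections $P_{\overline{\mathcal{R}(A)}}$), and then verify that each such projection appended to an operator is harmless for $w_A$ — which is exactly the content of the theorem proved just above, $w_A(P_{\overline{\mathcal{R}(A)}}T)=w_A(TP_{\overline{\mathcal{R}(A)}})=w_A(T)$. No genuinely hard estimate is needed; the whole proof is a combination of $A$-unitary invariance, the compression inequality, and the two scalarization lemmas.
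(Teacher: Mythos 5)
Your route is genuinely different from the paper's, and (modulo one concrete slip, below) it works. The paper does not conjugate $\mathbb{T}$ by a Hadamard-type $\mathbb{A}$-unitary at all: it sets $T=\mathbb{T}^{\#_\mathbb{A}}$ and $Q=\bigl[\begin{smallmatrix}O&I\\I&O\end{smallmatrix}\bigr]$ (resp.\ $\bigl[\begin{smallmatrix}O&I\\-I&O\end{smallmatrix}\bigr]$), verifies $Q$ is $\mathbb{A}$-unitary, and applies the commutator bound of Lemma~\ref{lemm00001}, $w_\mathbb{A}(TQ\pm QT^{\#_\mathbb{A}})\le 2w_\mathbb{A}(T)$, using \eqref{eqn00_1.8} to absorb the projections; the resulting matrices have the symmetric forms $\bigl[\begin{smallmatrix}S_1&S_2\\S_2&S_1\end{smallmatrix}\bigr]$ and $\bigl[\begin{smallmatrix}S_2&-S_1\\S_1&S_2\end{smallmatrix}\bigr]$, whose $\mathbb{A}$-radii are evaluated exactly by Lemma~\ref{lemm_0000}(iii) and Lemma~\ref{l002}. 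Your argument replaces the commutator inequality by $\mathbb{A}$-unitary invariance plus the diagonal-compression inequality of Lemma~\ref{l001}(i), which is arguably more transparent and avoids Lemma~\ref{lemm00001} entirely; the price is that you must verify the Hadamard-type matrices are $\mathbb{A}$-unitary and invoke the preceding theorem $w_A(P_{\overline{\mathcal{R}(A)}}S)=w_A(S)$ to discard the projections (which, note, appear on the \emph{left} of the diagonal entries, since they come from $U^{\#_\mathbb{A}}$ acting on the left — not trailing as you wrote; harmless either way).

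The one genuine error is your choice of $V$ for the $\beta$ bound. With $V=\tfrac{1}{\sqrt2}\bigl[\begin{smallmatrix}I&iI\\I&-iI\end{smallmatrix}\bigr]=UD$ where $D=\mathrm{diag}(I,iI)$, you get $V^{\#_\mathbb{A}}\mathbb{T}V=D^{\#_\mathbb{A}}(U^{\#_\mathbb{A}}\mathbb{T}U)D$, i.e.\ you conjugate by $D$ \emph{after} $U$; a direct computation shows the diagonal entries are again $\tfrac12 P_{\overline{\mathcal{R}(A)}}(T_1+T_2+T_3+T_4)$ and $\tfrac12 P_{\overline{\mathcal{R}(A)}}(T_1-T_2-T_3+T_4)$, so this $V$ merely reproduces the $\alpha$ bound. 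Your parenthetical description of the intended effect ($T_2\mapsto iT_2$, $T_3\mapsto -iT_3$) is correct, but it corresponds to conjugating by $D$ \emph{first}, i.e.\ to $V=DU=\tfrac{1}{\sqrt2}\bigl[\begin{smallmatrix}I&I\\iI&-iI\end{smallmatrix}\bigr]$; with that choice the diagonal entries of $V^{\#_\mathbb{A}}\mathbb{T}V$ are indeed $\tfrac12 P_{\overline{\mathcal{R}(A)}}(T_1+T_4\pm i(T_2-T_3))$ and the $\beta$ bound follows. You are right that the $T_2-T_4$ in the statement's $\beta$ is a typo for $T_2-T_3$, as the paper's own display \eqref{eqn00003} confirms.
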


\begin{proof}
Let $T=\begin{bmatrix}
T_1^{\#_A} & T_3^{\#_A}\\ T_2^{\#_A} & T_4^{\#_A}
\end{bmatrix}$ and $Q=\begin{bmatrix}
O& I\\I & O
\end{bmatrix}.$ 
 To show that $Q$ is $\mathbb{A}$-unitary, we need to prove that $\|x\|_\mathbb{A}=\|Qx\|_\mathbb{A}=\|Q^{\#_\mathbb{A}}x\|_\mathbb{A}.$ So,
\begin{align*}
    Q^{\#_\mathbb{A}}&=  \begin{bmatrix}
O & I^{\#_A}\\
I^{\#_A} & O
\end{bmatrix}  ~~\mbox{by Lemma \ref{lem_fek}} \\
&=\begin{bmatrix}
 O& P_{\overline{\mathcal{R}(A)}} \\
P_{\overline{\mathcal{R}(A)}} & O 
\end{bmatrix} ~~~\because ~~ N(A)^{\perp}=\overline{\mathcal{R}(A^*)} ~~\&~~ \mathcal{R}(A^*)=\mathcal{R}(A).
\end{align*}
This in turn implies
    $QQ^{\#_\mathbb{A}}=\begin{bmatrix}
P_{\overline{\mathcal{R}(A)}} & O\\
O & P_{\overline{\mathcal{R}(A)}}
\end{bmatrix}= Q^{\#_\mathbb{A}}Q$.
Now, for $x=(x_1,x_2)\in \mathcal{H}\bigoplus \mathcal{H}$, we have
\begin{align*}
    \|Qx\|_\mathbb{A}^2=\langle Qx, Qx\rangle_\mathbb{A}=\langle Q^{\#_\mathbb{A}}Qx,x\rangle_\mathbb{A} &=\left\langle \begin{bmatrix}
P_{\overline{\mathcal{R}(A)}} & O\\
O & P_{\overline{\mathcal{R}(A)}}
 \end{bmatrix}\begin{bmatrix}
x_1\\
x_2
 \end{bmatrix},\begin{bmatrix}
x_1\\
x_2
 \end{bmatrix}\right\rangle_\mathbb{A}\\
 &=\left\langle\begin{bmatrix}
AP_{\overline{\mathcal{R}(A)}} & O\\
O & AP_{\overline{\mathcal{R}(A)}}
 \end{bmatrix}\begin{bmatrix}
x_1\\
x_2
 \end{bmatrix},\begin{bmatrix}
x_1\\
x_2
 \end{bmatrix}\right\rangle\\
  & = \left\langle\begin{bmatrix}
 AA^\dagger A & O\\
O & AA^\dagger A
 \end{bmatrix}\begin{bmatrix}
x_1\\
x_2
 \end{bmatrix},\begin{bmatrix}
x_1\\
x_2
 \end{bmatrix}\right\rangle\\
 &=\left\langle\begin{bmatrix}
A & O\\
O & A
 \end{bmatrix}\begin{bmatrix}
x_1\\
x_2
 \end{bmatrix},\begin{bmatrix}
x_1\\
x_2
 \end{bmatrix}\right\rangle\\
 &=\|x\|_{\mathbb{A}}^2.
\end{align*}
 So, $\|Qx\|_\mathbb{A}=\|x\|_\mathbb{A}.$ Similarly, it can be proved that $\|Q^{\#_\mathbb{A}}x\|_\mathbb{A}=\|x\|_\mathbb{A}.$ Thus, $Q$ is an $\mathbb{A}$-unitary operator.
 By Lemma \ref{lemm00001},  we obtain
\begin{equation}
    w_A(TQ\pm QT^{\#_A})\leq 2w_A(T).
\end{equation}
So, \begin{align*}
    2w_{\mathbb{A}}(T)&\geq w_{\mathbb{A}}\left(\begin{bmatrix}
T_1^{\#_A} & T_3^{\#_A}\\ T_2^{\#_A} & T_4^{\#_A}
\end{bmatrix}\begin{bmatrix}
O& P_{\overline{\mathcal{R}(A)}}\\P_{\overline{\mathcal{R}(A)}} & O
\end{bmatrix}+\begin{bmatrix}
O& I\\I & O
\end{bmatrix}\begin{bmatrix}
T_1^{\#_A} & T_3^{\#_A}\\ T_2^{\#_A} & T_4^{\#_A}
\end{bmatrix}\right)\\
&=w_{\mathbb{A}}\left(\begin{bmatrix}
T_3^{\#_A}P_{\overline{\mathcal{R}(A)}} & T_1^{\#_A}P_{\overline{\mathcal{R}(A)}}\\ T_4^{\#_A}P_{\overline{\mathcal{R}(A)}} & T_2^{\#_A}P_{\overline{\mathcal{R}(A)}}
\end{bmatrix}+\begin{bmatrix}
T_2^{\#_A} & T_4^{\#_A}\\ T_1^{\#_A} & T_3^{\#_A}
\end{bmatrix}\right)\\
&=w_{\mathbb{A}}\left(\begin{bmatrix}
T_3^{\#_A} & T_1^{\#_A}\\ T_4^{\#_A} & T_2^{\#_A}
\end{bmatrix}+\begin{bmatrix}
T_2^{\#_A} & T_4^{\#_A}\\ T_1^{\#_A} & T_3^{\#_A}
\end{bmatrix}\right)~~~~~\mbox{by \eqref{eqn00_1.8}}\\
\end{align*}
\begin{align*}
&=w_{\mathbb{A}}\left(\begin{bmatrix}
T_3^{\#_A}+T_2^{\#_A} & T_1^{\#_A}+T_4^{\#_A}\\ T_4^{\#_A}+T_1^{\#_A} & T_2^{\#_A}+T_3^{\#_A}
\end{bmatrix}\right)\\
&=w_{\mathbb{A}}\left(\begin{bmatrix}
T_2+T_3 & T_4+T_1\\ T_4+T_1 & T_2+T_3
\end{bmatrix}^{\#_{\mathbb{A}}}\right)=w_{\mathbb{A}}\left(\begin{bmatrix}
T_2+T_3 & T_4+T_1\\ T_4+T_1 & T_2+T_3
\end{bmatrix}\right).
\end{align*}
Hence, we have 
\begin{equation}\label{eqn00001}
    2w_{\mathbb{A}}\left(\begin{bmatrix}
T_1 &T_2\\ T_3 & T_4
\end{bmatrix}.\right)=2w_{\mathbb{A}}\left(\begin{bmatrix}
T_1^{\#_A} &T_3^{\#_A}\\ T_2^{\#_A} & T_4^{\#_A}
\end{bmatrix}.\right)\geq w_{\mathbb{A}}\left(\begin{bmatrix}
T_2+T_3 & T_4+T_1\\ T_4+T_1 & T_2+T_3
\end{bmatrix}\right).
\end{equation}
By \eqref{eqn00001} and Lemma \ref{lemm_0000}, we obtain 
\begin{equation}\label{eqn00002}
    w_\mathbb{A}\left(\begin{bmatrix} T_1 & T_2\\T_3& T_4
\end{bmatrix}\right)\geq \frac{1}{2}\max\{w_A(T_1+T_2+T_3+T_4), w_A(T_2+T_3-T_4-T_1)\}.
\end{equation}
Again, applying Lemma \ref{lemm00001} and taking  $T=\begin{bmatrix}
T_1^{\#_A} &T_3^{\#_A}\\ T_2^{\#_A} & T_4^{\#_A}
\end{bmatrix}$ and $Q=\begin{bmatrix}
O&I\\ -I & O
\end{bmatrix}.$ It is easy to verify that $Q$ is $\mathbb{A}$-unitary. We now have by Lemma \ref{lemm00001}
\begin{equation}
    w_\mathbb{A}\left(TQ^{\#_A}\pm QT\right)\leq 2w_\mathbb{A}(T).
\end{equation}
So, 
\begin{align*}
    2w_\mathbb{A}(T)&\geq w_{\mathbb{A}}\left(\begin{bmatrix}
T_1^{\#_A} & T_3^{\#_A}\\ T_2^{\#_A} & T_4^{\#_A}
\end{bmatrix}
\begin{bmatrix}
O& -P_{\overline{\mathcal{R}(A)}}\\P_{\overline{\mathcal{R}(A)}} & O
\end{bmatrix}-\begin{bmatrix}
O& I\\-I & O
\end{bmatrix}\begin{bmatrix}
T_1^{\#_A} & T_3^{\#_A}\\ T_2^{\#_A} & T_4^{\#_A}
\end{bmatrix}\right)\\
&=w_{\mathbb{A}}\left(\begin{bmatrix}
T_3^{\#_A}P_{\overline{\mathcal{R}(A)}} & -T_1^{\#_A}P_{\overline{\mathcal{R}(A)}}\\ T_4^{\#_A}P_{\overline{\mathcal{R}(A)}} & -T_2^{\#_A}P_{\overline{\mathcal{R}(A)}}
\end{bmatrix}-\begin{bmatrix}
T_2^{\#_A} & T_4^{\#_A}\\ -T_1^{\#_A} & -T_3^{\#_A}
\end{bmatrix}\right)\\
&=w_{\mathbb{A}}\left(\begin{bmatrix}
-T_2^{\#_A}+T_3^{\#_A} & -T_4^{\#_A}-T_1^{\#_A}\\ T_4^{\#_A}+T_1^{\#_A} & -T_2^{\#_A}+T_3^{\#_A}
\end{bmatrix}\right)~~~\mbox{by \eqref{eqn00_1.8}}\\
&=w_{\mathbb{A}}\left(\begin{bmatrix}
-T_2+T_3 & T_4+T_1\\ -T_4-T_1 & -T_2+T_3
\end{bmatrix}\right).
\end{align*}
By  Lemma \ref{l002},  we therefore achieve the following:
\begin{equation}\label{eqn00003}
    w_\mathbb{A}\left(\begin{bmatrix}
T_1&T_2\\T_3&T_4
\end{bmatrix}\right)\geq \frac{1}{2}\max\{w_A(T_4+T_1-i(T_2-T_3)), w_A(T_4+T_1+i(T_2-T_3))\}.
\end{equation}
From \eqref{eqn00002} and \eqref{eqn00003}, we get the desired result.
\end{proof}
We provide below the same estimate as in Theorem 2.8 \cite{FekSat} for $\mathbb{A}$-numerical radius of an operator matrix that improves but by dropping the assumption $\mathcal{N}(A)^\perp$ is  invariant under $T_1,T_2\in\mathcal{B}_A(\mathcal{H}).$

\begin{thm}
Let $T_1,T_2\in\mathcal{B}_A(\mathcal{H}).$ Then $$w_\mathbb{A}\left(\begin{bmatrix}
T_1 & T_2\\O& O
\end{bmatrix}\right)\geq \frac{1}{2}\max \{w_A(T_1+iT_2),w_A(T_1-iT_2)\}.$$
\end{thm}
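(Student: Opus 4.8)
The statement only asserts a lower bound, so the plan is to exhibit the right comparison. A completely elementary route works: writing $S=\begin{bmatrix}T_1&T_2\\O&O\end{bmatrix}$, for any $u\in\mathcal{H}$ with $\|u\|_A=1$ and any $\theta\in\mathbb{R}$ the vector $x=\tfrac{1}{\sqrt{2}}(u,e^{i\theta}u)$ has $\|x\|_\mathbb{A}=1$ and $\langle Sx,x\rangle_\mathbb{A}=\tfrac12\langle (T_1+e^{i\theta}T_2)u,u\rangle_A$, so that $w_\mathbb{A}(S)\ge\tfrac12 w_A(T_1+e^{i\theta}T_2)$ for every $\theta$; taking $\theta=\tfrac{\pi}{2}$ and $\theta=-\tfrac{\pi}{2}$ already yields the claim. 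However, to stay inside the paper's toolkit (and to parallel the two preceding theorems) I would rather deduce it by transplanting the inequality, via Lemma \ref{lemm00001} and an $\mathbb{A}$-unitary, onto an operator matrix handled by Lemma \ref{l002}.

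Concretely, put $S=\begin{bmatrix}T_1&T_2\\O&O\end{bmatrix}$, so $S\in\mathcal{B}_\mathbb{A}(\mathcal{H})$ and $S^{\#_\mathbb{A}}=\begin{bmatrix}T_1^{\#_A}&O\\T_2^{\#_A}&O\end{bmatrix}$ by Lemma \ref{lem_fek}, and note $w_\mathbb{A}(S)=w_\mathbb{A}(S^{\#_\mathbb{A}})$ by \eqref{eqn_00000001.4}. Take $Q=\begin{bmatrix}O&-iI\\iI&O\end{bmatrix}$; one has $Q^*=Q$, hence $Q^{\#_\mathbb{A}}=\mathbb{A}^\dagger Q^*\mathbb{A}=\begin{bmatrix}O&-iP_{\overline{\mathcal{R}(A)}}\\iP_{\overline{\mathcal{R}(A)}}&O\end{bmatrix}$, and using $\|P_{\overline{\mathcal{R}(A)}}x\|_A=\|x\|_A$ one checks $\|Qx\|_\mathbb{A}=\|Q^{\#_\mathbb{A}}x\|_\mathbb{A}=\|x\|_\mathbb{A}$, so $Q$ is $\mathbb{A}$-unitary with $\|Q\|_\mathbb{A}=1$. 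Applying Lemma \ref{lemm00001} (in its $\mathbb{A}$-version) to $Q$ and $S^{\#_\mathbb{A}}$ gives $w_\mathbb{A}\bigl(QS^{\#_\mathbb{A}}+S^{\#_\mathbb{A}}Q^{\#_\mathbb{A}}\bigr)\le 2\|Q\|_\mathbb{A}w_\mathbb{A}(S^{\#_\mathbb{A}})=2w_\mathbb{A}(S)$. The crucial simplification is that the projections in $S^{\#_\mathbb{A}}Q^{\#_\mathbb{A}}$ collapse via \eqref{eqn00_1.8}, i.e. $T_j^{\#_A}P_{\overline{\mathcal{R}(A)}}=T_j^{\#_A}$, so that $QS^{\#_\mathbb{A}}+S^{\#_\mathbb{A}}Q^{\#_\mathbb{A}}=\begin{bmatrix}-iT_2^{\#_A}&-iT_1^{\#_A}\\iT_1^{\#_A}&-iT_2^{\#_A}\end{bmatrix}$; this is exactly what removes the need for the invariance hypothesis on $\mathcal{N}(A)^\perp$.

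The last matrix has the shape $\begin{bmatrix}\mathcal{S}_2&-\mathcal{S}_1\\\mathcal{S}_1&\mathcal{S}_2\end{bmatrix}$ with $\mathcal{S}_1=iT_1^{\#_A}$ and $\mathcal{S}_2=-iT_2^{\#_A}$, so Lemma \ref{l002} gives $w_\mathbb{A}\bigl(QS^{\#_\mathbb{A}}+S^{\#_\mathbb{A}}Q^{\#_\mathbb{A}}\bigr)=\max\{w_A(\mathcal{S}_1+i\mathcal{S}_2),\,w_A(\mathcal{S}_1-i\mathcal{S}_2)\}$. Using $(aT+bR)^{\#_A}=\bar aT^{\#_A}+\bar bR^{\#_A}$, $w_A(\lambda X)=|\lambda|w_A(X)$ and $w_A(X^{\#_A})=w_A(X)$, one gets $\mathcal{S}_1+i\mathcal{S}_2=i(T_1+iT_2)^{\#_A}$ and $\mathcal{S}_1-i\mathcal{S}_2=i(T_1-iT_2)^{\#_A}$, so that the right-hand side equals $\max\{w_A(T_1+iT_2),\,w_A(T_1-iT_2)\}$; combined with the bound from Lemma \ref{lemm00001} this is precisely the desired inequality. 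I expect the only real difficulty to be the bookkeeping behind the choice of $Q$: the phase $i$ and the antidiagonal pattern must be picked so that $Q$ stays $\mathbb{A}$-unitary while $QS^{\#_\mathbb{A}}+S^{\#_\mathbb{A}}Q^{\#_\mathbb{A}}$ falls exactly into the $\begin{bmatrix}\mathcal{S}_2&-\mathcal{S}_1\\\mathcal{S}_1&\mathcal{S}_2\end{bmatrix}$ template of Lemma \ref{l002} — and it is essential to conjugate $S^{\#_\mathbb{A}}$ rather than $S$, since that is what makes the $P_{\overline{\mathcal{R}(A)}}$ factors disappear.
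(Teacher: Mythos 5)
Your committed argument is correct and is essentially the paper's own proof: the paper likewise applies Lemma \ref{lemm00001} to $S^{\#_\mathbb{A}}=\begin{bmatrix}T_1^{\#_A}&O\\T_2^{\#_A}&O\end{bmatrix}$ and an antidiagonal $\mathbb{A}$-unitary, uses \eqref{eqn00_1.8} to absorb the $P_{\overline{\mathcal{R}(A)}}$ factors, and finishes with Lemma \ref{l002}; the only difference is cosmetic, since the paper takes $Q=\begin{bmatrix}O&-I\\I&O\end{bmatrix}$ with the minus sign $S^{\#_\mathbb{A}}Q^{\#_\mathbb{A}}-QS^{\#_\mathbb{A}}$, landing directly on $\begin{bmatrix}T_2&-T_1\\T_1&T_2\end{bmatrix}^{\#_\mathbb{A}}$, whereas your phases $\pm i$ and the plus sign reach the same template after the harmless rotation by $i$. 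Worth noting: your opening ``elementary route'' with $x=\tfrac{1}{\sqrt2}(u,e^{i\theta}u)$ is also correct and is genuinely simpler than what the paper does --- it proves the stronger statement $w_\mathbb{A}(S)\ge\tfrac12\,w_A(T_1+e^{i\theta}T_2)$ for every $\theta$ directly from the definition \eqref{eqn1a}, with no appeal to Lemmas \ref{lemm00001} and \ref{l002} at all, so it would be a legitimate (and shorter) standalone proof.
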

\begin{proof}
Suppose that  $T=\begin{bmatrix}
T_1^{\#_A} & O\\T_2^{\#_A}& O
\end{bmatrix}$ and $Q=\begin{bmatrix}
 O & -I\\I & O
\end{bmatrix}.$ It then follows  that $Q$ is $\mathbb{A}$-unitary. So, $\|Q\|_\mathbb{A}=1.$ Using Lemma \ref{lemm00001}, we get
$$2w_\mathbb{A}(T)\geq w_\mathbb{A}(TQ^{\#_A}-QT).$$
 Now,
 \begin{align*}
w_\mathbb{A}(T)&\geq \frac{1}{2}w_\mathbb{A}(TQ^{\#_\mathbb{A}}-QT)\\
&=\frac{1}{2}w_\mathbb{A}\left(\begin{bmatrix}
T_1^{\#_A} & O\\T_2^{\#_A}& O
\end{bmatrix}\begin{bmatrix}
 O & P_{\overline{\mathcal{R}(A)}}\\-P_{\overline{\mathcal{R}(A)}}& O
\end{bmatrix}-\begin{bmatrix}
 O& -I\\I& O
\end{bmatrix}\begin{bmatrix}
T_1^{\#_A} & O\\T_2^{\#_A}& O
\end{bmatrix}\right)\\
&=\frac{1}{2}w_\mathbb{A}\left(\begin{bmatrix}
O& T_1^{\#_A}P_{\overline{\mathcal{R}(A)}}\\O& T_2^{\#_A}P_{\overline{\mathcal{R}(A)}}
\end{bmatrix}-\begin{bmatrix}
  -T_2^{\#_A}&O\\T_1^{\#_A}& O
\end{bmatrix}\right)\\
&=\frac{1}{2}w_{\mathbb{A}}\left(\begin{bmatrix}
T_2^{\#_A} & T_1^{\#_A}\\-T_1^{\#_A}& T_2^{\#_A}
\end{bmatrix}\right)~~by~\eqref{eqn00_1.8}\\
&=\frac{1}{2}w_{\mathbb{A}}\left(\begin{bmatrix}
T_2 & -T_1\\T_1& T_2
\end{bmatrix}^{\#_\mathbb{A}}\right)\\
&=\frac{1}{2}w_\mathbb{A}\left(\begin{bmatrix}
T_2 & -T_1\\T_1& T_2
\end{bmatrix}\right).
 \end{align*}
 By  Lemma \ref{l002}, we thus have 
 $$w_\mathbb{A}\left(\begin{bmatrix}
T_1 & T_2\\O & O
\end{bmatrix}\right)=w_\mathbb{A}\left(\begin{bmatrix}
T_1^{\#_A} & O\\T_2^{\#_A}& O
\end{bmatrix}\right)\geq \frac{1}{2}\max\{w_\mathbb{A}(T_1+iT_2),w_\mathbb{A}(T_1-iT_2)\}.$$
\end{proof}
\begin{cor}
Let $T=P+iQ$ be the cartesian decomposition in $\mathcal{B}_A(\mathcal{H})$.  Then
$$\frac{1}{2}w_A(T)\leq \min\left\{w_\mathbb{A}\left(\begin{bmatrix}
P & Q\\O & O
\end{bmatrix}\right), w_\mathbb{A}\left(\begin{bmatrix}
O & P\\Q & O
\end{bmatrix}\right)\right\}.$$
\end{cor}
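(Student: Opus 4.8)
The plan is to establish the two one-sided estimates $\tfrac12 w_A(T)\le w_\mathbb{A}\!\left(\begin{bmatrix}P&Q\\O&O\end{bmatrix}\right)$ and $\tfrac12 w_A(T)\le w_\mathbb{A}\!\left(\begin{bmatrix}O&P\\Q&O\end{bmatrix}\right)$ separately; taking the minimum of the two then gives the corollary. Throughout write $P=Re_A(T)=\tfrac{T+T^{\#_A}}{2}$ and $Q=Im_A(T)=\tfrac{T-T^{\#_A}}{2i}$, so that indeed $T=P+iQ$; both $P$ and $Q$ lie in $\mathcal{B}_A(\mathcal{H})$ because $\mathcal{B}_A(\mathcal{H})$ is a subalgebra that is closed under taking $A$-adjoints. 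I would first note that $P$ and $Q$ are $A$-selfadjoint: using $AT^{\#_A}=T^*A$, we get $AP=\tfrac12(AT+T^*A)$ and $AQ=\tfrac1{2i}(AT-T^*A)$, both selfadjoint since $A=A^*$. Consequently $\langle Pu,u\rangle_A$ and $\langle Qu,u\rangle_A$ are real for every $u\in\mathcal{H}$, and since $\langle Tu,u\rangle_A=\langle Pu,u\rangle_A+i\langle Qu,u\rangle_A$ we have $|\langle Tu,u\rangle_A|=\bigl(\langle Pu,u\rangle_A^2+\langle Qu,u\rangle_A^2\bigr)^{1/2}$.

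The first estimate is immediate from the theorem proved just above, applied with $T_1=P$ and $T_2=Q$: since $P+iQ=T$, it yields $w_\mathbb{A}\!\left(\begin{bmatrix}P&Q\\O&O\end{bmatrix}\right)\ge\tfrac12\max\{w_A(P+iQ),w_A(P-iQ)\}\ge\tfrac12 w_A(T)$.

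For the second estimate I would argue directly with a family of test vectors. Fix $u\in\mathcal{H}$ with $\|u\|_A=1$ and $\theta\in\mathbb{R}$, and set $x_\theta=\tfrac1{\sqrt2}(u,e^{i\theta}u)\in\mathcal{H}\oplus\mathcal{H}$. A direct check gives $\|x_\theta\|_\mathbb{A}=1$ and $\bigl\langle\begin{bmatrix}O&P\\Q&O\end{bmatrix}x_\theta,x_\theta\bigr\rangle_\mathbb{A}=\tfrac12\bigl(e^{i\theta}\langle Pu,u\rangle_A+e^{-i\theta}\langle Qu,u\rangle_A\bigr)$. Writing $p=\langle Pu,u\rangle_A$, $q=\langle Qu,u\rangle_A$ (real, by the first paragraph), the elementary identity $\sup_{\theta\in\mathbb{R}}|e^{i\theta}p+e^{-i\theta}q|=|p|+|q|$ together with $|p|+|q|\ge(p^2+q^2)^{1/2}$ gives $w_\mathbb{A}\!\left(\begin{bmatrix}O&P\\Q&O\end{bmatrix}\right)\ge\tfrac12(p^2+q^2)^{1/2}=\tfrac12|\langle Tu,u\rangle_A|$, and taking the supremum over all $A$-unit vectors $u$ yields $w_\mathbb{A}\!\left(\begin{bmatrix}O&P\\Q&O\end{bmatrix}\right)\ge\tfrac12 w_A(T)$. (Alternatively, one could deduce this second estimate from the earlier lower bound for the $\mathbb{A}$-numerical radius of a general $2\times2$ operator matrix, specialised to $T_1=T_4=O$, $T_2=P$, $T_3=Q$, again invoking the reality of $p$ and $q$.)

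Combining the two estimates finishes the proof. I do not expect a genuine obstacle; the one step calling for a little care is confirming in the first paragraph that $P$ and $Q$ are $A$-selfadjoint, because it is precisely the reality of $\langle Pu,u\rangle_A$ and $\langle Qu,u\rangle_A$ that collapses the optimisation over $\theta$ to $|p|+|q|$ and makes it dominate $|\langle Tu,u\rangle_A|$.
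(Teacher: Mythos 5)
Your proposal is correct. The first estimate is obtained exactly as in the paper, by specialising the immediately preceding theorem to $T_1=P$, $T_2=Q$ (the paper additionally notes $P-iQ=T^{\#_A}$ and $w_A(T^{\#_A})=w_A(T)$, but as you observe the bound $\tfrac12\max\{\cdot,\cdot\}\ge\tfrac12 w_A(T)$ already suffices). For the second estimate your route differs from the paper's: the paper invokes Lemma \ref{lemm_0000}(ii) to replace $Q$ by $iQ$ in the off-diagonal corner, i.e.\ $w_\mathbb{A}\!\left(\begin{bmatrix}O&P\\Q&O\end{bmatrix}\right)=w_\mathbb{A}\!\left(\begin{bmatrix}O&P\\iQ&O\end{bmatrix}\right)\ge\tfrac12 w_A(P+iQ)=\tfrac12 w_A(T)$, where the middle step rests on the known lower bound for off-diagonal operator matrices; you instead test the $\mathbb{A}$-numerical radius directly on the unit vectors $\tfrac1{\sqrt2}(u,e^{i\theta}u)$ and optimise over $\theta$, using the $A$-selfadjointness of $P$ and $Q$ to reduce to the scalar identity $\sup_\theta|e^{i\theta}p+e^{-i\theta}q|=|p|+|q|\ge\sqrt{p^2+q^2}=|\langle Tu,u\rangle_A|$. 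Your computation checks out ($\|x_\theta\|_\mathbb{A}=1$ and the quadratic form evaluates as you state), and it has the merit of being self-contained, bypassing Lemma \ref{lemm_0000} entirely and in fact yielding the slightly stronger bound $\tfrac12\sup_u(|p_u|+|q_u|)$. The paper's version is shorter because it reuses machinery already established for off-diagonal $2\times2$ operator matrices. Your parenthetical alternative via the general lower-bound theorem also works, though it needs the extra pointwise observation $\max\{|p+q|,|p-q|\}=|p|+|q|\ge\sqrt{p^2+q^2}$ before passing to suprema, which you correctly flag.
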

\begin{proof}
\begin{align}\label{eqn00004}
    w_\mathbb{A}\left(\begin{bmatrix}
P & Q\\O & O
\end{bmatrix}\right)&\geq \frac{1}{2}\max\{w_A(P+iQ),w_A(P-iQ)\}\nonumber\\
&=\frac{1}{2}\max\{w_A(T),w_A(T^{\#_A})\}\nonumber\\
&=\frac{1}{2}w_\mathbb{A}(T).
\end{align}
Using Lemma \ref{lemm_0000}, we obtain

\begin{equation}\label{eqn00005}
    w_\mathbb{A}\left(\begin{bmatrix} O&P\\Q&O
\end{bmatrix}\right)=w_\mathbb{A}\left(\begin{bmatrix} O&P\\iQ&O
\end{bmatrix}\right)\geq \frac{1}{2}w_A(P\pm iQ)=\frac{1}{2}w_A(T).
\end{equation}
From \eqref{eqn00004} and \eqref{eqn00005}, we have $$\frac{1}{2}w_A(T)\leq \min\left\{w_\mathbb{A}\left(\begin{bmatrix}
P & Q\\O & O
\end{bmatrix}\right), w_\mathbb{A}\left(\begin{bmatrix}
O & P\\Q & O
\end{bmatrix}\right)\right\}.$$
\end{proof}

We remark that the condition ``$\mathcal{N}(A)^\perp$ is invariant under operators in $\mathcal{B}_A(\mathcal{H})$'' in Theorem 2.9 \cite{FekSat} can also be dropped, similarly and is presented next.
Next, we recall a lemma that is used to prove Theorem \ref{t3.4}.

\begin{lem}\label{lem0003.1} [Lemma 2.6, \cite{Feki}]\\
Let $X,Y\in\mathcal{B}_A(\mathcal{H}).$ Then 
$$w_\mathbb{A}\left(\begin{bmatrix}
O&X\\Y&O
\end{bmatrix}\right)=\frac{1}{2}\sup_{\theta\in\mathbb{R}}\|e^{i\theta} X+e^{-i\theta}Y^{\#_A}\|_A.$$
\end{lem}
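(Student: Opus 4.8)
The final statement to prove is Lemma~\ref{lem0003.1}, attributed to Feki~\cite{Feki}, which identifies the $\mathbb{A}$-numerical radius of an off-diagonal $2\times 2$ operator matrix $\begin{bmatrix} O & X \\ Y & O \end{bmatrix}$ with the quantity $\tfrac12 \sup_{\theta\in\mathbb{R}} \|e^{i\theta}X + e^{-i\theta}Y^{\#_A}\|_A$.

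\textbf{Plan of proof.} The plan is to compute $w_\mathbb{A}$ directly from its definition, exploiting the block structure and the parametrization of unit vectors. First I would recall that for any $R\in\mathcal{B}_{\mathbb{A}}(\mathcal{H})$ one has the ``supremum over phases'' formula $w_{\mathbb{A}}(R) = \sup_{\theta\in\mathbb{R}} \|\mathrm{Re}_{\mathbb{A}}(e^{i\theta}R)\|_{\mathbb{A}}$, which is the semi-Hilbertian analogue of the classical identity $w(R)=\sup_\theta\|\mathrm{Re}(e^{i\theta}R)\|$; this is available in the literature on $\mathbb{A}$-numerical radius and I would cite it (or prove it quickly from $w_{\mathbb{A}}(R)=\sup\{|\langle Rx,x\rangle_{\mathbb{A}}| : \|x\|_{\mathbb{A}}=1\}$ by writing $|\langle Rx,x\rangle_{\mathbb{A}}| = \sup_\theta \mathrm{Re}(e^{i\theta}\langle Rx,x\rangle_{\mathbb{A}})$ and swapping suprema). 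Applying this with $R = \begin{bmatrix} O & X \\ Y & O \end{bmatrix}$, and using Lemma~\ref{lem_fek} to get $R^{\#_{\mathbb{A}}} = \begin{bmatrix} O & Y^{\#_A} \\ X^{\#_A} & O \end{bmatrix}$, the real part $\mathrm{Re}_{\mathbb{A}}(e^{i\theta}R) = \tfrac12\begin{bmatrix} O & e^{i\theta}X + e^{-i\theta}Y^{\#_A} \\ e^{i\theta}Y + e^{-i\theta}X^{\#_A} & O \end{bmatrix}$ is again an off-diagonal block operator.

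\textbf{Key steps.} The next step is to evaluate the $\mathbb{A}$-operator seminorm of an off-diagonal selfadjoint-type block $\begin{bmatrix} O & Z \\ W & O \end{bmatrix}$. I would note that $\begin{bmatrix} O & Z \\ W & O \end{bmatrix}$ is $\mathbb{A}$-selfadjoint precisely when $W = Z^{\#_A}$ up to the projection $P_{\overline{\mathcal{R}(A)}}$, which is exactly our situation since $(e^{i\theta}X+e^{-i\theta}Y^{\#_A})^{\#_A} = e^{-i\theta}X^{\#_A} + e^{i\theta}(Y^{\#_A})^{\#_A} = e^{-i\theta}X^{\#_A}+e^{i\theta}P_{\overline{\mathcal{R}(A)}}YP_{\overline{\mathcal{R}(A)}}$. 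For an $\mathbb{A}$-selfadjoint operator $S$ one has $\|S\|_{\mathbb{A}}=w_{\mathbb{A}}(S)$, but more to the point I would use the elementary fact (analogous to the Hilbert-space case) that $\left\|\begin{bmatrix} O & Z \\ Z^{\#_A} & O \end{bmatrix}\right\|_{\mathbb{A}} = \|Z\|_A$, together with $\|Z\|_A = \|Z^{\#_A}\|_A$ and equation~\eqref{ineq0}. Concretely, from $\left\|\begin{bmatrix} O & Z \\ W & O \end{bmatrix}\right\|_{\mathbb{A}}^2 = \left\|\begin{bmatrix} O & Z \\ W & O \end{bmatrix}^{\#_{\mathbb{A}}}\begin{bmatrix} O & Z \\ W & O \end{bmatrix}\right\|_{\mathbb{A}} = \left\|\begin{bmatrix} W^{\#_A}W & O \\ O & Z^{\#_A}Z \end{bmatrix}\right\|_{\mathbb{A}} = \max\{\|W\|_A^2, \|Z\|_A^2\}$; and since here $W$ differs from $Z^{\#_A}$ only by the projections $P_{\overline{\mathcal{R}(A)}}$ on both sides, the identities~\eqref{eqn00_1.8}--\eqref{eqn00_1.9} give $\|W\|_A = \|Z^{\#_A}\|_A = \|Z\|_A$, so the whole thing collapses to $\|Z\|_A$ with $Z = e^{i\theta}X + e^{-i\theta}Y^{\#_A}$. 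Combining, $\|\mathrm{Re}_{\mathbb{A}}(e^{i\theta}R)\|_{\mathbb{A}} = \tfrac12\|e^{i\theta}X + e^{-i\theta}Y^{\#_A}\|_A$, and taking $\sup_\theta$ yields the claim.

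\textbf{Main obstacle.} The delicate point is the bookkeeping with the projection $P_{\overline{\mathcal{R}(A)}}$: in the semi-Hilbertian setting $(Z^{\#_A})^{\#_A} = P_{\overline{\mathcal{R}(A)}} Z P_{\overline{\mathcal{R}(A)}} \neq Z$ in general, so one cannot simply transcribe the Hilbert-space proof; one must repeatedly invoke \eqref{eqn00_1.8} and \eqref{eqn00_1.9} (which say that left- or right-multiplying a $\#_A$-adjoint by $P_{\overline{\mathcal{R}(A)}}$ changes nothing) to collapse the extra projections, and one must be careful that the $\mathbb{A}$-seminorm only ``sees'' vectors in $\overline{\mathcal{R}(\mathbb{A})} = \overline{\mathcal{R}(A)}\oplus\overline{\mathcal{R}(A)}$, on which these projections act as the identity. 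I also expect a minor subtlety in justifying the interchange of the two suprema (over unit vectors and over $\theta$) in the ``$\mathrm{Re}_{\mathbb{A}}$'' formula for $w_{\mathbb{A}}$; this is routine but should be stated, e.g.\ by first showing $\geq$ via a single fixed $\theta$ and then $\leq$ by choosing, for each near-extremal unit vector $x$, the phase $\theta$ that makes $\langle Rx,x\rangle_{\mathbb{A}}$ real and nonnegative.
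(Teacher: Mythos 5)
The paper does not actually prove this lemma: it is quoted from [Lemma 2.6, \cite{Feki}] and used as a black box to prove Theorem \ref{t3.4}, so there is no in-paper argument to compare yours against. Judged on its own, your proposal is correct and follows the standard route (essentially the one in the cited source): combine Zamani's identity $w_{\mathbb{A}}(R)=\sup_{\theta\in\mathbb{R}}\|Re_{\mathbb{A}}(e^{i\theta}R)\|_{\mathbb{A}}$ from \cite{Zam} with the block computation of $R^{\#_{\mathbb{A}}}$ from Lemma \ref{lem_fek} and the formula $\left\|\begin{bmatrix} O& Z\\ W& O\end{bmatrix}\right\|_{\mathbb{A}}=\max\{\|Z\|_A,\|W\|_A\}$. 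The only step you state a little too quickly is the claim $\|W\|_A=\|Z\|_A$ for $Z=e^{i\theta}X+e^{-i\theta}Y^{\#_A}$ and $W=e^{i\theta}Y+e^{-i\theta}X^{\#_A}$: the clean justification is that $\|T\|_A=\|P_{\overline{\mathcal{R}(A)}}TP_{\overline{\mathcal{R}(A)}}\|_A$ for every $T\in\mathcal{B}^A(\mathcal{H})$ (because the seminorm is a supremum over $x,y\in\overline{\mathcal{R}(A)}$, on which the projection acts as the identity), after which \eqref{eqn00_1.8}--\eqref{eqn00_1.9} give $P_{\overline{\mathcal{R}(A)}}WP_{\overline{\mathcal{R}(A)}}=e^{-i\theta}X^{\#_A}+e^{i\theta}P_{\overline{\mathcal{R}(A)}}YP_{\overline{\mathcal{R}(A)}}=Z^{\#_A}$, whence $\|W\|_A=\|Z^{\#_A}\|_A=\|Z\|_A$ by \eqref{ineq0}. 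With that spelled out the argument is complete; the interchange of suprema you flag as a possible subtlety is routine and is already packaged inside Zamani's identity, so nothing further is needed there.
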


\begin{thm}\label{t3.4}
Let $T_1,T_2\in\mathcal{B}_A(\mathcal{H}).$ Then 
$$w_{\mathbb{A}}^4\left(\begin{bmatrix}
O&T_1\\T_2&O
\end{bmatrix}\right)\leq \frac{1}{16}\|P\|^2+\frac{1}{4}w^2_A(T_2T_1)+\frac{1}{8}w_A(PT_2T_1+T_2T_1P)$$
where $P=T_1^{\#_A}T_1+T_2T_2^{\#_A}.$
\end{thm}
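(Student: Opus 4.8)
I would begin by setting $B=\begin{bmatrix}O&T_1\\T_2&O\end{bmatrix}$; by Lemma~\ref{lem_fek} we have $B\in\mathcal{B}_{\mathbb{A}}(\mathcal{H})$ with $B^{\#_{\mathbb{A}}}=\begin{bmatrix}O&T_2^{\#_A}\\T_1^{\#_A}&O\end{bmatrix}$, so Lemma~\ref{lem0003.1} gives $w_{\mathbb{A}}(B)=\tfrac12\sup_{\theta\in\mathbb{R}}\|S_\theta\|_A$, where $S_\theta:=e^{i\theta}T_1+e^{-i\theta}T_2^{\#_A}$. The engine of the proof is the exact identity
$$\|S_\theta x\|_A^2=\langle Px,x\rangle_A+2\,\mathrm{Re}\bigl(e^{2i\theta}\langle T_2T_1x,x\rangle_A\bigr),\qquad x\in\mathcal{H},$$
obtained by expanding $\langle S_\theta x,S_\theta x\rangle_A$ and using the $A$-adjoint relations $\|T_1x\|_A^2=\langle T_1^{\#_A}T_1x,x\rangle_A$, $\|T_2^{\#_A}x\|_A^2=\langle T_2T_2^{\#_A}x,x\rangle_A$ and $\langle T_1x,T_2^{\#_A}x\rangle_A=\langle T_2T_1x,x\rangle_A$. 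Writing $R:=T_2T_1$ and $H_\theta:=P+e^{2i\theta}R+e^{-2i\theta}R^{\#_A}$, this says $\langle H_\theta x,x\rangle_A=\|S_\theta x\|_A^2\ge0$ for all $x$; moreover $AR^{\#_A}=R^*A$ and $AP=P^*A$ together give $AH_\theta=H_\theta^*A$, so $H_\theta$ is $A$-selfadjoint and $A$-positive, and therefore $\|S_\theta\|_A^2=\sup_{\|x\|_A=1}\langle H_\theta x,x\rangle_A=\|H_\theta\|_A$.

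The next step would be to raise this to the fourth power: $\|S_\theta\|_A^4=\|H_\theta\|_A^2$. Since $H_\theta$ is $A$-selfadjoint, $H_\theta^{\#_A}=P_{\overline{\mathcal{R}(A)}}H_\theta$, so \eqref{ineq0} yields $\|H_\theta\|_A^2=\|H_\theta^{\#_A}H_\theta\|_A=\|P_{\overline{\mathcal{R}(A)}}H_\theta^2\|_A=\|H_\theta^2\|_A$, the last equality because $A(I-P_{\overline{\mathcal{R}(A)}})=O$ makes the projection invisible inside an $A$-seminorm. Put $K_\theta:=e^{2i\theta}R+e^{-2i\theta}R^{\#_A}$ (also $A$-selfadjoint). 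Expanding $H_\theta^2=(P^2+K_\theta^2)+(PK_\theta+K_\theta P)$ and using subadditivity of $\|\cdot\|_A$, the plan is to bound $\|P^2+K_\theta^2\|_A$ by $\|P\|^2+4w_A^2(T_2T_1)$ and $\|PK_\theta+K_\theta P\|_A$ by $2w_A(PT_2T_1+T_2T_1P)$, both uniformly in $\theta$.

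Both bounds hinge on the sub-identity $\sup_{\theta\in\mathbb{R}}\|e^{i\theta}X+e^{-i\theta}X^{\#_A}\|_A=2w_A(X)$ for $X\in\mathcal{B}_A(\mathcal{H})$, which is immediate on comparing Lemma~\ref{lem0003.1} (applied to $\begin{bmatrix}O&X\\X&O\end{bmatrix}$) with Lemma~\ref{lemm_0000}(iii). Taking $X=R$ gives $\|K_\theta\|_A\le2w_A(T_2T_1)$, and then subadditivity plus submultiplicativity of $\|\cdot\|_A$, together with $\|P\|_A\le\|P\|$ (valid since $P$ is $A$-positive), gives $\|P^2+K_\theta^2\|_A\le\|P\|_A^2+\|K_\theta\|_A^2\le\|P\|^2+4w_A^2(T_2T_1)$. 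For the other block, $PK_\theta+K_\theta P=e^{2i\theta}(PR+RP)+e^{-2i\theta}(PR^{\#_A}+R^{\#_A}P)$; using $P^{\#_A}=P_{\overline{\mathcal{R}(A)}}P$ and the identities \eqref{eqn00_1.8}--\eqref{eqn00_1.9}, one checks that $PR^{\#_A}+R^{\#_A}P$ and $(PR+RP)^{\#_A}$ differ by an operator of zero $A$-seminorm, hence $\|PK_\theta+K_\theta P\|_A=\|e^{2i\theta}(PR+RP)+e^{-2i\theta}(PR+RP)^{\#_A}\|_A\le2w_A(PT_2T_1+T_2T_1P)$ by the sub-identity with $X=PR+RP$. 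Combining, $\|S_\theta\|_A^4\le\|P\|^2+4w_A^2(T_2T_1)+2w_A(PT_2T_1+T_2T_1P)$ for every $\theta$, and since $w_{\mathbb{A}}^4(B)=\tfrac1{16}\sup_\theta\|S_\theta\|_A^4$, dividing by $16$ yields the stated inequality.

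The main obstacle I anticipate is not a single estimate but the systematic bookkeeping of the gap between $T$, $T^{\#_A}$, $(T^{\#_A})^{\#_A}$ and the projection $P_{\overline{\mathcal{R}(A)}}$: one has to verify that $H_\theta$, $K_\theta$ and $PR+RP$ really behave like honest $A$-selfadjoint operators and that inserting or deleting $P_{\overline{\mathcal{R}(A)}}$ never changes an $A$-seminorm --- exactly the point that lets one discard the hypothesis ``$\mathcal{N}(A)^\perp$ is invariant under $T_1,T_2$''. A second, subtler point is that a naive triangle-inequality bound on $\|PK_\theta+K_\theta P\|_A$ loses a factor $2$; the sharp constant $\tfrac18$ in the conclusion is recovered only by recognizing the ``$2\,\mathrm{Re}_A$'' structure of $PK_\theta+K_\theta P$ and appealing to the sub-identity.
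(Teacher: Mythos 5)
Your proposal is correct and follows essentially the same route as the paper's proof: both reduce the $\mathbb{A}$-numerical radius of the off-diagonal matrix to $\tfrac{1}{2}\sup_{\theta}\|e^{i\theta}T_1+e^{-i\theta}T_2^{\#_A}\|_A$ via Lemma \ref{lem0003.1}, identify $\|e^{i\theta}T_1+e^{-i\theta}T_2^{\#_A}\|_A^2$ with $\|P+2Re_A(e^{2i\theta}T_2T_1)\|_A$ (discarding the stray $P_{\overline{\mathcal{R}(A)}}$'s by \eqref{eqn00_1.8}--\eqref{eqn00_1.9}), square, and split by the triangle inequality into the three terms with constants $\tfrac{1}{16}$, $\tfrac{1}{4}$, $\tfrac{1}{8}$ before taking the supremum over $\theta$. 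The only cosmetic differences are that you work with the quadratic form $\langle H_\theta x,x\rangle_A$ rather than expanding $S_\theta^{\#_A}S_\theta$ directly, and that you derive $\sup_{\theta}\|e^{i\theta}X+e^{-i\theta}X^{\#_A}\|_A=2w_A(X)$ explicitly from Lemmas \ref{lem0003.1} and \ref{lemm_0000}(iii), where the paper uses it implicitly.
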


\begin{proof}
Let $T=\begin{bmatrix}
O&T_1\\T_2&O
\end{bmatrix},$  $P=T_1^{\#_A}T_1+T_2T_2^{\#_A}$ and $\theta\in\mathbb{R}.$
Now,
\begin{align*}
    &\frac{1}{2}\|e^{i\theta} T_1+e^{-i\theta}T_2^{\#_A}\|_A\\
   &=\frac{1}{2}\|(e^{i\theta} T_1+e^{-i\theta}T_2^{\#_A})^{\#_A}(e^{i\theta} T_1+e^{-i\theta}T_2^{\#_A})\|_A^\frac{1}{2}\\
     \end{align*}
\begin{align*}
    &=\frac{1}{2}\|(e^{-i\theta} T_1^{\#_A}+e^{i\theta}(T_2^{\#_A})^{\#_A})(e^{i\theta} T_1+e^{-i\theta}T_2^{\#_A})\|_A^\frac{1}{2}\\
    &=\frac{1}{2}\|T_1^{\#_A}T_1+e^{-2i\theta}T_1^{\#_A}T_2^{\#_A}+e^{2i\theta}(T_2^{\#_A})^{\#_A}T_1+(T_2^{\#_A})^{\#_A}T_2^{\#_A}\|_A^{\frac{1}{2}}\\
    &=\frac{1}{2}\|T_1^{\#_A}(T_1^{\#_A})^{\#_A}+e^{2i\theta}(T_2^{\#_A})^{\#_A}(T_1^{\#_A})^{\#_A}+e^{-2i\theta}T_1^{\#_A}T_2^{\#_A}+(T_2^{\#_A})^{\#_A}T_2^{\#_A}\|_A^{\frac{1}{2}}~~(\because ~\|T\|_A=\|T^{\#_A}\|_A)\\
    &=\frac{1}{2}\|T_1^{\#_A}T_1+e^{-2i\theta}T_1^{\#_A}T_2^{\#_A}+e^{2i\theta}T_2T_1+T_2T_2^{\#_A}\|_A^{\frac{1}{2}}\\
   &=\frac{1}{2}\|T_1^{\#_A}T_1+T_2T_2^{\#_A}+(e^{2i\theta}T_2T_1)^{\#_A}+e^{2i\theta}T_2T_1\|_A^{\frac{1}{2}}\\
    &=\frac{1}{2}\|T_1^{\#_A}T_1+T_2T_2^{\#_A}+2Re(e^{2i\theta}T_2T_1)\|_A^{\frac{1}{2}}\\
    &=\frac{1}{2}\|(T_1^{\#_A}T_1+T_2T_2^{\#_A}+2Re(e^{2i\theta}T_2T_1))^2\|_A^{\frac{1}{4}}\\
    &=\frac{1}{2}\|P^2+4(Re(e^{2i\theta}T_2T_1))^2+2P Re(e^{2i\theta}T_2T_1)+2Re(e^{2i\theta}T_2T_1)P\|_A^{\frac{1}{4}}\\
    &=\frac{1}{2}\|P^2+4(Re(e^{2i\theta}T_2T_1))^2+2 Re(e^{2i\theta}(PT_2T_1+T_2T_1P))\|_A^{\frac{1}{4}}.
\end{align*}
So, $$\left(\frac{1}{2}\|e^{i\theta} T_1+e^{-i\theta}T_2^{\#_A}\|_A\right)^4=\frac{1}{16}\|P^2+4(Re(e^{2i\theta}T_2T_1))^2+2 Re(e^{2i\theta}(PT_2T_1+T_2T_1P))\|_A.$$
This implies 
$$\left(\frac{1}{2}\|e^{i\theta} T_1+e^{-i\theta}T_2^{\#_A}\|_A\right)^4\leq\frac{1}{16}\|P\|_A^2+\frac{1}{4}\|Re_A(e^{2i\theta}T_2T_1)\|_A^2+\frac{1}{8} \|Re(e^{2i\theta}(PT_2T_1+T_2T_1P))\|_A.$$
Now, taking supremum over $\theta\in\mathbb{R}$ and using Lemma \ref{lem0003.1}, we thus obtain
$$w_{\mathbb{A}}^4\left(\begin{bmatrix}
O&T_1\\T_2&O
\end{bmatrix}\right)\leq \frac{1}{16}\|P\|^2+\frac{1}{4}w^2_A(T_2T_1)+\frac{1}{8}w_A(PT_2T_1+T_2T_1P).$$
\end{proof}

Note that the authors of \cite{PINTU1} proved the above theorem with the assumption  $A>0.$
Using  Theorem \ref{t3.4} and Lemma \ref{lemm_0000},  we now establish the following inequality.
\begin{cor}
Let $T_1,T_2\in\mathcal{B}_A(\mathcal{H}).$ Then $$w_A(T_1T_2)\leq \frac{1}{4}\sqrt{\|P\|^2+4w_A^2(T_2T_1)+2w_A(T_2T_1P+PT_2T_1)}$$ where $P=T_1^{\#_A}T_1+T_2T_2^{\#_A}.$
\end{cor}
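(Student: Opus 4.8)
The plan is that the entire corollary falls out of Theorem~\ref{t3.4} after one elementary reduction: the fourth power of $w_\mathbb{A}$ of the off-diagonal matrix $\begin{bmatrix} O & T_1\\ T_2 & O\end{bmatrix}$ is, up to the factor $\tfrac1{16}$, precisely the expression appearing under the square root, so it suffices to show that $w_A(T_1T_2)^2$ is dominated by that fourth power and then take square roots.

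First I would set $T=\begin{bmatrix} O & T_1\\ T_2 & O\end{bmatrix}$, which lies in $\mathcal{B}_\mathbb{A}(\mathcal{H})$ by Lemma~\ref{lem_fek}, and compute $T^2=\begin{bmatrix} T_1T_2 & O\\ O & T_2T_1\end{bmatrix}$. By the block-diagonal identity in Lemma~\ref{l001}(i) (and, if one prefers to phrase it that way, the symmetry recorded in Lemma~\ref{lemm_0000}), $w_\mathbb{A}(T^2)=\max\{w_A(T_1T_2),\,w_A(T_2T_1)\}\ge w_A(T_1T_2)$. Next I would invoke the power inequality for the $\mathbb{A}$-numerical radius, $w_\mathbb{A}(T^2)\le w_\mathbb{A}(T)^2$ — the same inequality $w(T^n)\le w(T)^n$ already used in the proof of Theorem~\ref{thm3.5} — to conclude $w_A(T_1T_2)\le w_\mathbb{A}(T)^2$, hence $w_A(T_1T_2)^2\le w_\mathbb{A}(T)^4$. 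Finally, applying Theorem~\ref{t3.4} to $T$ gives $w_A(T_1T_2)^2\le w_\mathbb{A}(T)^4\le \tfrac1{16}\|P\|^2+\tfrac14 w_A^2(T_2T_1)+\tfrac18 w_A(PT_2T_1+T_2T_1P)$ with $P=T_1^{\#_A}T_1+T_2T_2^{\#_A}$; pulling $\tfrac1{16}$ out of the right-hand side and taking square roots yields $w_A(T_1T_2)\le\tfrac14\sqrt{\|P\|^2+4w_A^2(T_2T_1)+2w_A(T_2T_1P+PT_2T_1)}$, which is the assertion (using $PT_2T_1+T_2T_1P=T_2T_1P+PT_2T_1$).

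I do not expect any real obstacle here: the heavy lifting is entirely contained in Theorem~\ref{t3.4}, and the remaining ingredients — the block-diagonal evaluation of $w_\mathbb{A}$ and the power inequality $w_\mathbb{A}(T^2)\le w_\mathbb{A}(T)^2$ — are both already available in the paper. The only point worth a sentence of care is making explicit that the off-diagonal matrix squares to the block-diagonal matrix with entries $T_1T_2$ and $T_2T_1$, so that $w_A(T_1T_2)$ indeed appears as a lower bound for $w_\mathbb{A}(T^2)$.
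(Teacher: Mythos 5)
Your proposal is correct and follows essentially the same route as the paper: bound $w_A(T_1T_2)$ by $\max\{w_A(T_1T_2),w_A(T_2T_1)\}=w_\mathbb{A}(T^2)$ for the off-diagonal block matrix $T$, apply the power inequality $w_\mathbb{A}(T^2)\le w_\mathbb{A}^2(T)$, and finish with Theorem~\ref{t3.4}. No substantive difference.
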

\begin{proof}
Here\begin{align*}
    w_A(T_1T_2)&\leq\max\{w_A(T_1T_2),w_A(T_2T_1)\}\\
    &=w_\mathbb{A}\left(\begin{bmatrix}
T_1T_2&O\\
O&T_2T_1
\end{bmatrix}\right)\\
&=w_\mathbb{A}\left(\begin{bmatrix}
O&T_1\\
T_2&O
\end{bmatrix}^2\right)\\
&\leq w_\mathbb{A}^2\left(\begin{bmatrix}
O&T_1\\
T_2&O
\end{bmatrix}\right)\\
&\leq \frac{1}{4}\sqrt{\|P\|^2+4w_A^2(T_2T_1)+2w_A(T_2T_1P+PT_2T_1)}.
\end{align*}
The last inequality follows by Theorem \ref{t3.4}.
\end{proof}
Adopting a parallel technique as in the proof of the  Theorem \ref{t3.4}, one can prove the following result.   
\begin{thm}\label{thm3.8}
Let $T_1,T_2\in\mathcal{B}_A(\mathcal{H})$,  
\begin{equation}
    w_{\mathbb{A}}^4\left(\begin{bmatrix}
O&T_1\\T_2&O
\end{bmatrix}\right)\geq \frac{1}{16}\|P\|^2+\frac{1}{8}c_A(PT_2T_1+T_2T_1P)+\frac{1}{4}m^2_A(T_2T_1),
\end{equation}
where $P=T_1^{\#_A}T_1+T_2T_2^{\#_A}$ and  $m_A(T_2T_1)=\displaystyle \inf_{\theta\in\mathbb{R}}\inf_{\substack{x\in\mathcal{H}\\\|x\|_A=1}}\|Re(e^{i\theta}T_2T_1)x\|_A.$ 
\end{thm}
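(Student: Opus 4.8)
The plan is to run a lower‑bound analogue of the proof of Theorem~\ref{t3.4}: there the triangle inequality for $\|\cdot\|_{\mathbb A}$ produces an \emph{upper} estimate, and here it must be replaced by evaluating a suitable $A$‑positive operator on a single, carefully chosen unit vector together with a carefully chosen phase. Write $T=\begin{bmatrix}O&T_1\\ T_2&O\end{bmatrix}$, $P=T_1^{\#_A}T_1+T_2T_2^{\#_A}$, and, for $\theta\in\mathbb R$, $R_\theta=Re_A(e^{2i\theta}T_2T_1)$ and $S_\theta=P+2R_\theta$. Since $P$ is $A$‑positive and each $R_\theta$ is $A$‑selfadjoint (being the $A$‑real part of an operator of $\mathcal B_A(\mathcal H)$), $S_\theta$ is $A$‑selfadjoint, so $AS_\theta^2=S_\theta^*AS_\theta\ge 0$, i.e.\ $S_\theta^2$ is $A$‑positive. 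Exactly the computation carried out in the proof of Theorem~\ref{t3.4} gives the two identities
$$\tfrac12\|e^{i\theta}T_1+e^{-i\theta}T_2^{\#_A}\|_A=\tfrac12\|S_\theta^2\|_A^{1/4},\qquad S_\theta^2=P^2+4R_\theta^2+2\,Re_A\!\big(e^{2i\theta}(PT_2T_1+T_2T_1P)\big),$$
and hence, by Lemma~\ref{lem0003.1} together with the $A$‑positivity of $S_\theta^2$,
$$w_{\mathbb A}^4(T)=\frac1{16}\sup_{\theta\in\mathbb R}\|S_\theta^2\|_A=\frac1{16}\sup_{\theta\in\mathbb R}\ \sup_{\|x\|_A=1}\big\langle S_\theta^2x,x\big\rangle_A.$$

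Next I would expand the quadratic form for a fixed unit vector $x$. Using $A$‑selfadjointness of $P$ and $R_\theta$ one gets $\langle P^2x,x\rangle_A=\|Px\|_A^2$, $\langle R_\theta^2x,x\rangle_A=\|R_\theta x\|_A^2$, and $\langle Re_A(e^{2i\theta}(PT_2T_1+T_2T_1P))x,x\rangle_A=\mathrm{Re}\big(e^{2i\theta}\langle(PT_2T_1+T_2T_1P)x,x\rangle_A\big)$, so
$$\big\langle S_\theta^2x,x\big\rangle_A=\|Px\|_A^2+4\|R_\theta x\|_A^2+2\,\mathrm{Re}\Big(e^{2i\theta}\big\langle(PT_2T_1+T_2T_1P)x,x\big\rangle_A\Big).$$
Now, with $x$ fixed, I would choose $\theta_x\in\mathbb R$ so that $e^{2i\theta_x}\langle(PT_2T_1+T_2T_1P)x,x\rangle_A=\big|\langle(PT_2T_1+T_2T_1P)x,x\rangle_A\big|$. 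The first summand does not depend on $\theta$; the cross term then equals $2\big|\langle(PT_2T_1+T_2T_1P)x,x\rangle_A\big|\ge 2c_A(PT_2T_1+T_2T_1P)$; and $\|R_{\theta_x}x\|_A\ge m_A(T_2T_1)$, because $m_A(T_2T_1)$ is an infimum over \emph{all} real phases and \emph{all} unit vectors (as $\theta$ runs over $\mathbb R$ so does $2\theta$). Therefore, for every unit $x$,
$$\sup_{\theta\in\mathbb R}\big\langle S_\theta^2x,x\big\rangle_A\ \ge\ \|Px\|_A^2+4m_A^2(T_2T_1)+2c_A(PT_2T_1+T_2T_1P).$$

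Finally I would take the supremum over unit vectors $x$ on both sides. Recalling that $w_{\mathbb A}^4(T)=\frac1{16}\sup_x\sup_\theta\langle S_\theta^2x,x\rangle_A$ and that, since $P$ is $A$‑positive, $\sup_{\|x\|_A=1}\|Px\|_A^2=\|P\|_A^2$ (the other two terms being constants), this yields
$$w_{\mathbb A}^4(T)\ \ge\ \frac1{16}\|P\|_A^2+\frac14\,m_A^2(T_2T_1)+\frac18\,c_A(PT_2T_1+T_2T_1P),$$
which is the asserted inequality (the symbol $\|P\|$ in the statement being the $A$‑operator seminorm $\|P\|_A$, in keeping with the usage in Theorem~\ref{t3.4}).

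The place where genuine care is needed is the middle step. In the upper‑bound argument for Theorem~\ref{t3.4} one is free to optimize $\theta$ separately inside each of the three summands, whereas for the lower bound a single pair $(x,\theta_x)$ must serve all three at once. This is possible precisely because the $m_A$‑ and $c_A$‑estimates are \emph{uniform}: $\|R_{\theta_x}x\|_A\ge m_A(T_2T_1)$ holds for \emph{any} $\theta_x$, and $|\langle(PT_2T_1+T_2T_1P)x,x\rangle_A|\ge c_A(PT_2T_1+T_2T_1P)$ holds for \emph{every} unit $x$, so the only freedom actually spent is the phase making the cross term nonnegative and the choice of $x$ that nearly maximizes $\|Px\|_A$. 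A secondary, purely bookkeeping point is to confirm that the algebraic identity for $S_\theta^2$ and the norm identity $\|e^{i\theta}T_1+e^{-i\theta}T_2^{\#_A}\|_A=\|S_\theta^2\|_A^{1/4}$ transfer verbatim from the proof of Theorem~\ref{t3.4}, which they do, since that portion of the argument uses no optimization.
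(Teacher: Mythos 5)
Your proof is correct and follows essentially the same route as the paper's: Lemma \ref{lem0003.1}, the algebraic identity carried over from the proof of Theorem \ref{t3.4}, evaluation of the resulting $A$-positive operator at a unit vector, and separate lower bounds for the three summands. If anything, your handling of the single pair $(x,\theta_x)$ --- choosing the phase so the cross term equals its modulus, observing that the $m_A$- and $c_A$-estimates are uniform in $\theta$ and $x$, and only then taking the supremum over $x$ in the $\|Px\|_A$ term --- is more careful than the paper's own write-up, which ends with ``taking infimum over $x$ and $\theta$'' even though the $\|P\|^2$ term requires a supremum and the cross term requires the explicit phase choice.
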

 \begin{proof}
 Let $x\in\mathcal{H}$ such that $\|x\|_A=1$ and $\theta\in\mathbb{R}.$ By Lemma \ref{lem0003.1} we have
 $$w_\mathbb{A}\left(\begin{bmatrix}
O&T_1\\T_2&O
\end{bmatrix}\right)\geq \frac{1}{2}\|e^{i\theta}T_1+e^{-i\theta}T_2\|_A.$$
Now using similar arguments as in Theorem \ref{t3.4} we can obtain,
\begin{align*}
    w_\mathbb{A}\left(\begin{bmatrix}
O&T_1\\T_2&O
\end{bmatrix}\right)&\geq \frac{1}{2}\|(T_1^{\#_A}T_1+T_2T_2^{\#_A})^2+4(Re(e^{2i\theta}T_2T_1))^2+2 Re(e^{2i\theta}(PT_2T_1+T_2T_1P))\|_A^{\frac{1}{4}}\\
&\geq \frac{1}{2}\left\langle\left((T_1^{\#_A}T_1+T_2T_2^{\#_A})^2+4(Re(e^{2i\theta}T_2T_1))^2+2 Re(e^{2i\theta}(PT_2T_1+T_2T_1P))\right)x,x\right\rangle_A|^{\frac{1}{4}}\\
&\geq \frac{1}{2}\left[\|(T_1^{\#_A}T_1+T_2T_2^{\#_A})x\|^2+4\|(T_2T_1)x\|^2+2| \langle (PT_2T_1+T_2T_1P)x,x\rangle_A|\right]^{\frac{1}{4}}.
\end{align*}
Now taking infimum over $x\in\mathcal{H}$ and $\theta\in\mathbb{R}$, we obtain the result.

 \end{proof}
The next result provides  upper and lower bounds for $A$-numerical radius of $2\times 2$ operator matrix which follows directly using Theorem \ref{t3.4}, Theorem \ref{thm3.8}  and Lemma \ref{l001}.

\begin{thm}
Let $T_1,T_2,T_3,T_4\in\mathcal{B}_A(\mathcal{H}).$ Then
$$w_{\mathbb{A}}\left(\begin{bmatrix}
T_1&T_2\\T_3&T_4
\end{bmatrix}\right)\leq \max\{w_A(T_1,w_A(T_4))\}+ [\frac{1}{16}\|P\|^2+\frac{1}{8}w_A(PT_3T_2+T_3T_2P)+\frac{1}{4}w_A^2(T_3T_2)]^{1/4},$$
and 
$$w_{\mathbb{A}}\left(\begin{bmatrix}
T_1&T_2\\T_3&T_4
\end{bmatrix}\right)\geq \max\{w_A(T_1,w_A(T_4)), [\frac{1}{16}\|P\|^2+\frac{1}{8}c_A(PT_3T_2+T_3T_2P)+\frac{1}{4}m_A^2(T_3T_2)]^{1/4}\},$$
where $P=T_1^{\#_A}T_1+T_2T_2^{\#_A}$ and $m_A(T_2T_1)=\displaystyle \inf_{\theta\in\mathbb{R}}\inf_{\substack{x\in\mathcal{H}\\\|x\|_A=1}}\|Re(e^{i\theta}T_2T_1)x\|_A.$ 
\end{thm}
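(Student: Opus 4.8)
The plan is to split the given operator matrix into its diagonal part and its off-diagonal (hollow) part and then assemble the conclusion from three facts already in hand: the two-sided estimate for a diagonal block (Lemma~\ref{l001}), the upper bound for a hollow block (Theorem~\ref{t3.4}), and the matching lower bound for a hollow block (Theorem~\ref{thm3.8}). Write $T=D+N$ with $D=\begin{bmatrix}T_1&O\\O&T_4\end{bmatrix}$ and $N=\begin{bmatrix}O&T_2\\T_3&O\end{bmatrix}$. Applying Theorems~\ref{t3.4} and~\ref{thm3.8} to the block $N$ corresponds to the substitution $T_1\mapsto T_2,\ T_2\mapsto T_3$, so the relevant operator is $P=T_2^{\#_A}T_2+T_3T_3^{\#_A}$, which lies in $\mathcal{B}_A(\mathcal{H})$ because $\mathcal{B}_A(\mathcal{H})$ is an algebra closed under $\#_A$; this is exactly the $P$ meant in the statement, and the hypotheses of those two theorems are met.

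For the upper bound I would first record that $w_{\mathbb{A}}$ is subadditive: from its definition as a supremum of moduli of $\mathbb{A}$-sesquilinear evaluations over $\|x\|_{\mathbb{A}}=1$, one has $|\langle(D+N)x,x\rangle_{\mathbb{A}}|\le|\langle Dx,x\rangle_{\mathbb{A}}|+|\langle Nx,x\rangle_{\mathbb{A}}|$, hence $w_{\mathbb{A}}(T)\le w_{\mathbb{A}}(D)+w_{\mathbb{A}}(N)$. By the equality in Lemma~\ref{l001}(i), $w_{\mathbb{A}}(D)=\max\{w_A(T_1),w_A(T_4)\}$; by Theorem~\ref{t3.4} applied to $N$, $w_{\mathbb{A}}(N)\le\bigl[\tfrac{1}{16}\|P\|^{2}+\tfrac18 w_A(PT_3T_2+T_3T_2P)+\tfrac14 w_A^{2}(T_3T_2)\bigr]^{1/4}$. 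Adding the two estimates yields the claimed upper bound.

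For the lower bound I would dominate each of the two compressions of $T$ by $w_{\mathbb{A}}(T)$: Lemma~\ref{l001}(i) gives $w_{\mathbb{A}}(T)\ge w_{\mathbb{A}}(D)=\max\{w_A(T_1),w_A(T_4)\}$, and Lemma~\ref{l001}(ii) gives $w_{\mathbb{A}}(T)\ge w_{\mathbb{A}}(N)$, into which Theorem~\ref{thm3.8} inserts $w_{\mathbb{A}}(N)\ge\bigl[\tfrac{1}{16}\|P\|^{2}+\tfrac18 c_A(PT_3T_2+T_3T_2P)+\tfrac14 m_A^{2}(T_3T_2)\bigr]^{1/4}$. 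Taking the maximum of the two lower estimates gives the stated inequality. There is no genuine obstacle in this argument; it is a direct synthesis of earlier results, and the only points needing attention are the explicit verification of subadditivity of $w_{\mathbb{A}}$ (the one ingredient not stated verbatim above) and the bookkeeping that the $P$, the quantities $c_A,\ m_A$, and the products $T_3T_2$ appearing in the bounds are precisely those obtained from the hollow block after the relabeling $T_1\mapsto T_2,\ T_2\mapsto T_3$.
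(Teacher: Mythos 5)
Your proposal is correct and follows exactly the route the paper intends: the paper offers no written proof, stating only that the theorem ``follows directly using Theorem \ref{t3.4}, Theorem \ref{thm3.8} and Lemma \ref{l001}'', and your decomposition $T=D+N$ together with subadditivity of $w_{\mathbb{A}}$ supplies precisely the missing glue. Your observation that the relabeling $T_1\mapsto T_2$, $T_2\mapsto T_3$ forces $P=T_2^{\#_A}T_2+T_3T_3^{\#_A}$ (rather than the $P$ literally printed in the statement) is also the right reading of what is evidently a typographical slip in the paper.
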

We conclude this section with the following two results for $n\times n$ operator matrices.
First, we demonstrate an interesting property of $A-$numerical radius of an $n\times n$ operator matrix which is a generalization of Lemma 2.1 \cite{MEO}. 
\begin{thm}
Let $T=\begin{bmatrix}
 T_{11} & T_{12}  & \cdots  &   T_{1n}\\
 T_{21}  & T_{22} & \cdots  & T_{2n}\\
 \vdots & \vdots &\vdots & \vdots \\
T_{n1}  & T_{n2}  &\cdots&  T_{nn}
\end{bmatrix},$ where $T_{ij}\in\mathcal{B}_A(\mathcal{H})$ for $1\leq i,j\leq n.$ Then $$w_{\mathbb{A}}\left(\begin{bmatrix}
 T_{11} & O  & \cdots  &   O\\
 O  & T_{22} & \cdots  & O\\
 \vdots & \vdots &\vdots & \vdots \\
O  & O  &\cdots&  T_{nn}
\end{bmatrix}\right)\leq w_{\mathbb{A}}(T).$$
\end{thm}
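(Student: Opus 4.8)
The plan is to reduce the $n\times n$ statement to the $2\times 2$ case already available in Lemma \ref{l001}(i), using an induction on $n$ together with a ``blocking'' argument. First I would observe that the base case $n=2$ is exactly Lemma \ref{l001}(i). For the inductive step, suppose the result holds for all sizes up to $n-1$; given the $n\times n$ matrix $T$, I would partition it into a $2\times 2$ block operator matrix
\[
T=\begin{bmatrix} S & R_1 \\ R_2 & T_{nn}\end{bmatrix},
\]
where $S$ is the $(n-1)\times(n-1)$ leading principal block with entries $T_{ij}$, $1\le i,j\le n-1$, and $R_1,R_2$ are the corresponding off-diagonal block columns/rows. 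The point is that $S\in\mathcal{B}_{\mathbb{A}'}(\mathcal{H})$ where $\mathbb{A}'$ is the $(n-1)$-fold diagonal ampliation of $A$, by Lemma \ref{lem_fek}, so $T$ is itself a $2\times 2$ block operator whose diagonal entries $S$ and $T_{nn}$ lie in the appropriate $\mathcal{B}_{(\cdot)}(\mathcal{H})$ classes.

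Next, I would apply the $2\times 2$ estimate of Lemma \ref{l001}(i) at the block level: $\max\{w_{\mathbb{A}'}(S),\,w_A(T_{nn})\}\le w_{\mathbb{A}}(T)$. This immediately gives $w_A(T_{nn})\le w_{\mathbb{A}}(T)$ and $w_{\mathbb{A}'}(S)\le w_{\mathbb{A}}(T)$. Then by the induction hypothesis applied to $S$,
\[
w_{\mathbb{A}'}\!\left(\begin{bmatrix} T_{11} & & \\ & \ddots & \\ & & T_{n-1,n-1}\end{bmatrix}\right)\le w_{\mathbb{A}'}(S)\le w_{\mathbb{A}}(T).
\]
Finally, I would invoke the $n\times n$ diagonal identity $w_{\mathbb{A}}(\mathrm{diag}(T_{11},\dots,T_{nn}))=\max_{1\le i\le n} w_A(T_{ii})$ — which follows from the block-diagonal case of Lemma \ref{l001}(i) by an easy induction, since for a block-diagonal operator the $\mathbb{A}$-numerical radius is the supremum over blocks — to combine the two bounds $w_{\mathbb{A}'}(\mathrm{diag}(T_{11},\dots,T_{n-1,n-1}))\le w_{\mathbb{A}}(T)$ and $w_A(T_{nn})\le w_{\mathbb{A}}(T)$ into the desired conclusion $w_{\mathbb{A}}(\mathrm{diag}(T_{11},\dots,T_{nn}))\le w_{\mathbb{A}}(T)$.

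The main obstacle I anticipate is purely bookkeeping: one must be careful that the ampliation $\mathbb{A}'$ of $A$ on $\bigoplus_{i=1}^{n-1}\mathcal{H}$ is again a diagonal positive operator to which Lemma \ref{lem_fek} and Lemma \ref{l001} legitimately apply, and that the block decomposition respects the $\#_{\mathbb{A}}$ operations (so that $S\in\mathcal{B}_{\mathbb{A}'}(\mathcal{H})$, $R_1,R_2$ have the required adjointability, and the $2\times 2$ block form of $T$ genuinely belongs to $\mathcal{B}_{\mathbb{A}}$ in the sense of Lemma \ref{lem_fek}). Once the identification of $\mathbb{A}'$ with the diagonal ampliation of $A$ is made explicit, everything else is a direct chain of the quoted lemmas, so no genuinely hard analytic step is expected. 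Alternatively, one could bypass the induction and argue directly: restrict test vectors in $\bigoplus_{i=1}^n\mathcal{H}$ to those supported on a single coordinate, i.e. of the form $(0,\dots,x,\dots,0)$ with $\|x\|_A=1$ in the $i$-th slot, for which $\langle T\xi,\xi\rangle_{\mathbb{A}}=\langle T_{ii}x,x\rangle_A$; taking the supremum first over such $x$ and then over $i$ yields $\max_i w_A(T_{ii})\le w_{\mathbb{A}}(T)$ in one line, and this is probably the cleanest route.
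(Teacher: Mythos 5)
Your proposal is correct, but it takes a genuinely different route from the paper. The paper proves the statement by a unitary averaging (pinching) argument: it sets $z=e^{2\pi i/n}$, $U=\mathrm{diag}(I,zI,\dots,z^{n-1}I)$, verifies that $U$ is $\mathbb{A}$-unitary, establishes the identity $\mathrm{diag}(T_{11}^{\#_A},\dots,T_{nn}^{\#_A})=\frac{1}{n}\sum_{k=0}^{n-1}(U^{\#_{\mathbb{A}}})^{k}T^{\#_{\mathbb{A}}}U^{k}$, and then concludes via subadditivity of $w_{\mathbb{A}}$ together with the invariance $w_{\mathbb{A}}(U^{\#_{\mathbb{A}}}SU)=w_{\mathbb{A}}(S)$ and $w_{\mathbb{A}}(S)=w_{\mathbb{A}}(S^{\#_{\mathbb{A}}})$. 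Your main (inductive) route and especially your one-line alternative --- testing $T$ against vectors $(0,\dots,x,\dots,0)$ supported in a single coordinate to get $\max_i w_A(T_{ii})\le w_{\mathbb{A}}(T)$, and then using $w_{\mathbb{A}}(\mathrm{diag}(T_{11},\dots,T_{nn}))\le\max_i w_A(T_{ii})$ --- reaches the same conclusion far more cheaply, avoiding all of the $\#_{\mathbb{A}}$ and $P_{\overline{\mathcal{R}(A)}}$ bookkeeping that the paper's averaging identity requires; it is also exactly the argument behind Lemma \ref{l001}(i), so your direct version is really the $n\times n$ analogue of that lemma's proof. One caveat on your inductive route: Lemma \ref{l001}(i) is stated for four blocks acting on the \emph{same} space $\mathcal{H}$ with the same weight $A$, whereas your $2\times2$ blocking $\begin{bmatrix}S & R_1\\ R_2 & T_{nn}\end{bmatrix}$ has $S$ on $\bigoplus_{i=1}^{n-1}\mathcal{H}$ and $T_{nn}$ on $\mathcal{H}$, so the lemma cannot be cited verbatim (its proof does adapt, but this is precisely the bookkeeping you flag, and your direct argument sidesteps it entirely). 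What the paper's heavier approach buys is the averaging identity itself, which realizes the diagonal as an explicit convex combination of $\mathbb{A}$-unitary conjugates of $T$ and is the natural generalization of Lemma 2.1 of \cite{MEO}; what yours buys is brevity and robustness.
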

\begin{proof}
Let $z=e^{\frac{2\pi i}{n}}$ and $U=\begin{bmatrix}
 I & O  & \cdots  &   O\\
 O  & zI & \cdots  & O\\
 \vdots & \vdots &\vdots & \vdots \\
O  & O  &\cdots&  z^{n-1}I
\end{bmatrix}.$ It is easy to see that $\overline{z}=z^{-1}=z^{n-1}$ and $|z|=1.$  
To show that $U$ is $\mathbb{A}$-unitary, we need to prove that $\|x\|_\mathbb{A}=\|Ux\|_\mathbb{A}=\|U^{\#_\mathbb{A}}x\|_\mathbb{A},$ for $x=(x_1,x_2,\cdots,x_n)\in \bigoplus_{i=1}^n \mathcal{H}.$ Here,
\begin{align*}
    U^{\#_\mathbb{A}}&=  \begin{bmatrix}
 I & O  & \cdots  &   O\\
 O  & zI & \cdots  & O\\
 \vdots & \vdots &\vdots & \vdots \\
O  & O  &\cdots&  z^{n-1}I
\end{bmatrix}^{\#_A}\\
\end{align*}
\begin{align*}
&=\begin{bmatrix}
 I^{\#_A} & O  & \cdots  &   O\\
 O  & \overline{z}I^{\#_A} & \cdots  & O\\
 \vdots & \vdots &\vdots & \vdots \\
O  & O  &\cdots&  \overline{z}^{n-1}I^{\#_A}
\end{bmatrix}~~ \mbox{by Lemma \ref{lem_fek}}\\
&=\begin{bmatrix}
 P_{\overline{\mathcal{R}(A)}} & O  & \cdots  &   O\\
 O  & \overline{z}P_{\overline{\mathcal{R}(A)}} & \cdots  & O\\
 \vdots & \vdots &\vdots & \vdots \\
O  & O  &\cdots&  \overline{z}^{n-1}P_{\overline{\mathcal{R}(A)}}
\end{bmatrix}.
\end{align*}
This in turn implies
    $UU^{\#_\mathbb{A}}=\begin{bmatrix}
 P_{\overline{\mathcal{R}(A)}} & O  & \cdots  &   O\\
 O  & P_{\overline{\mathcal{R}(A)}} & \cdots  & O\\
 \vdots & \vdots &\vdots & \vdots \\
O  & O  &\cdots&  P_{\overline{\mathcal{R}(A)}}
\end{bmatrix}=U^{\#_A}U.$\\
Now, for $x=(x_1,x_2,\cdots,x_n)\in \bigoplus_{i=1}^n \mathcal{H}$, we have
\begin{align*}
    \|Ux\|_\mathbb{A}^2=\langle Ux, Ux\rangle_\mathbb{A}=\langle U^{\#_\mathbb{A}}Ux,x\rangle_\mathbb{A}
 &=\|x\|_{\mathbb{A}}^2.
\end{align*}
 So, $\|Ux\|_\mathbb{A}=\|x\|_\mathbb{A}.$ Similarly,  $\|U^{\#_\mathbb{A}}x\|_\mathbb{A}=\|x\|_\mathbb{A}.$ Thus, $U$ is an $\mathbb{A}$-unitary operator.
Further, a simple calculation shows that 
$$\begin{bmatrix}
 T_{11}^{\#_A} & O  & \cdots  &   O\\
 O  & T_{22}^{\#_A} & \cdots  & O\\
 \vdots & \vdots &\vdots & \vdots \\
O  & O  &\cdots&  T_{nn}^{\#_A}
\end{bmatrix}=\frac{1}{n}\sum_{k=0}^{n-1} {U^{\#_{\mathbb{A}}}}^k T^{\#_{\mathbb{A}}}U^k.$$
So,
\begin{align*}
    w_{\mathbb{A}}\left(\begin{bmatrix}
 T_{11}^{\#_A} & O  & \cdots  &   O\\
 O  & T_{22}^{\#_A} & \cdots  & O\\
 \vdots & \vdots &\vdots & \vdots \\
O  & O  &\cdots&  T_{nn}^{\#_A}
\end{bmatrix}\right)&\leq \frac{1}{n}\sum_{k=0}^{n-1}w_{\mathbb{A}}({U^{\#_{\mathbb{A}}}}^k T^{\#_{\mathbb{A}}}U^k)\\
&=\frac{1}{n}\sum_{k=0}^{n-1}w_{\mathbb{A}}(T^{\#_{\mathbb{A}}})\\
&=\frac{1}{n}\sum_{k=0}^{n-1}w_{\mathbb{A}}(T)\\
&=w_{\mathbb{A}}(T).
\end{align*}
This implies that $$w_{\mathbb{A}}\left(\begin{bmatrix}
 T_{11} & O  & \cdots  &   O\\
 O  & T_{22} & \cdots  & O\\
 \vdots & \vdots &\vdots & \vdots \\
O  & O  &\cdots&  T_{nn}
\end{bmatrix}^{\#_\mathbb{A}}\right)=w_{\mathbb{A}}\left(\begin{bmatrix}
 T_{11} & O  & \cdots  &   O\\
 O  & T_{22} & \cdots  & O\\
 \vdots & \vdots &\vdots & \vdots \\
O  & O  &\cdots&  T_{nn}
\end{bmatrix}\right)\leq w_A(T).$$
\end{proof}

The next theorem provides a relation between  $A$-numerical radius of two diagonal operator matrices, where $diag(T_1,\ldots,T_n)$ means an $n\times n$ diagonal operator  matrix with entries $T_1,\ldots,T_n$.
\begin{thm}
Let $T_i\in\mathcal{B}_A(\mathcal{H})$ for $1\leq i\leq n.$ Then $$w_{\mathbb{A}}(diag(\sum_{i=1}^n T_i,\ldots, \sum_{i=1}^n T_i))\leq n w_\mathbb{A}(diag(T_1,\ldots,T_n)).$$
\end{thm}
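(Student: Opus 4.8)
The plan is to run the same ``averaging over a cyclic group of $\mathbb{A}$-unitaries'' scheme used in the preceding theorem, but with the diagonal unitary $\operatorname{diag}(I,zI,\dots,z^{n-1}I)$ replaced by the block cyclic shift. Set $S=\operatorname{diag}(T_1,\dots,T_n)$, which lies in $\mathcal{B}_{\mathbb{A}}(\mathcal{H})$ by Lemma \ref{lem_fek}, and let $\Pi$ be the $n\times n$ block operator matrix whose $(i,j)$ block is $I$ when $j\equiv i+1\pmod n$ and $O$ otherwise; equivalently, $\Pi$ is the coordinate permutation $(x_1,\dots,x_n)\mapsto(x_2,\dots,x_n,x_1)$ on $\bigoplus_{i=1}^{n}\mathcal{H}$.

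First I would check that each power $\Pi^{k}$, $0\le k\le n-1$, is $\mathbb{A}$-unitary. Since $\Pi^{k}$ is again a block permutation matrix, its $\mathbb{A}$-adjoint is read off from Lemma \ref{lem_fek} by transposing and replacing every $I$ by $I^{\#_A}=P_{\overline{\mathcal{R}(A)}}$, and then $\|\Pi^{k}x\|_{\mathbb{A}}=\|x\|_{\mathbb{A}}=\|(\Pi^{k})^{\#_{\mathbb{A}}}x\|_{\mathbb{A}}$ follows exactly as in the proof of the preceding theorem, using $AP_{\overline{\mathcal{R}(A)}}=A=P_{\overline{\mathcal{R}(A)}}A$. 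Hence, by the $\mathbb{A}$-unitary invariance of the $\mathbb{A}$-numerical radius, $w_{\mathbb{A}}\!\big((\Pi^{k})^{\#_{\mathbb{A}}}S\,\Pi^{k}\big)=w_{\mathbb{A}}(S)=w_{\mathbb{A}}\big(\operatorname{diag}(T_1,\dots,T_n)\big)$ for every $k$.

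The heart of the argument is the identity
$$\sum_{k=0}^{n-1}(\Pi^{k})^{\#_{\mathbb{A}}}\,S\,\Pi^{k}=P_{\overline{\mathcal{R}(\mathbb{A})}}\,\operatorname{diag}\!\Big(\sum_{i=1}^{n}T_i,\ \dots,\ \sum_{i=1}^{n}T_i\Big),$$
where $P_{\overline{\mathcal{R}(\mathbb{A})}}=\operatorname{diag}(P_{\overline{\mathcal{R}(A)}},\dots,P_{\overline{\mathcal{R}(A)}})$. To get it I would use $(\Pi^{k})^{\#_{\mathbb{A}}}=(\Pi^{\#_{\mathbb{A}}})^{k}$, note that (by $P_{\overline{\mathcal{R}(A)}}^{2}=P_{\overline{\mathcal{R}(A)}}$) the $(p,q)$ block of $(\Pi^{\#_{\mathbb{A}}})^{k}$ is $P_{\overline{\mathcal{R}(A)}}$ when $q\equiv p-k\pmod n$ and $O$ otherwise, while the $(p,q)$ block of $\Pi^{k}$ is $I$ when $q\equiv p+k$ and $O$ otherwise; multiplying the three matrices shows $(\Pi^{k})^{\#_{\mathbb{A}}}S\,\Pi^{k}$ is diagonal with $(p,p)$ block $P_{\overline{\mathcal{R}(A)}}T_{p-k}$, and summing over $k$ and reindexing modulo $n$ gives the displayed identity. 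This index-and-projection bookkeeping is the one delicate point: the factors $P_{\overline{\mathcal{R}(A)}}$ genuinely remain (they would collapse to $I$ only if $A$ were strictly positive), and they must be carried along correctly.

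Finally I would combine the triangle inequality for $w_{\mathbb{A}}$ with the equalities $w_{\mathbb{A}}\!\big((\Pi^{k})^{\#_{\mathbb{A}}}S\,\Pi^{k}\big)=w_{\mathbb{A}}(S)$, and then strip off the leading projection by invoking the earlier theorem $w_A(P_{\overline{\mathcal{R}(A)}}T)=w_A(T)$ with $\mathbb{A}$ in place of $A$ (valid since $\operatorname{diag}(\sum_i T_i,\dots,\sum_i T_i)\in\mathcal{B}_{\mathbb{A}}(\mathcal{H})$):
$$w_{\mathbb{A}}\!\Big(\operatorname{diag}\big(\textstyle\sum_i T_i,\dots,\sum_i T_i\big)\Big)=w_{\mathbb{A}}\!\Big(P_{\overline{\mathcal{R}(\mathbb{A})}}\operatorname{diag}\big(\textstyle\sum_i T_i,\dots,\sum_i T_i\big)\Big)\le\sum_{k=0}^{n-1}w_{\mathbb{A}}\!\big((\Pi^{k})^{\#_{\mathbb{A}}}S\,\Pi^{k}\big)=n\,w_{\mathbb{A}}\big(\operatorname{diag}(T_1,\dots,T_n)\big),$$
which is precisely the asserted inequality.
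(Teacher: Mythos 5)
Your proof is correct, but it takes a genuinely different route from the paper's. You run the cyclic-shift pinching argument: conjugating $S=\operatorname{diag}(T_1,\dots,T_n)$ by the powers of the block shift $\Pi$, summing to get $P_{\overline{\mathcal{R}(\mathbb{A})}}\operatorname{diag}(\sum_i T_i,\dots,\sum_i T_i)$, and then removing the projection via $w_{\mathbb{A}}(P_{\overline{\mathcal{R}(\mathbb{A})}}\mathbb{T})=w_{\mathbb{A}}(\mathbb{T})$. All the steps check out: the block bookkeeping for $(\Pi^{k})^{\#_{\mathbb{A}}}S\Pi^{k}$ is right, $\Pi^{k}$ is indeed $\mathbb{A}$-unitary by the same computation as for the diagonal unitary in the preceding theorem, and you correctly keep the $P_{\overline{\mathcal{R}(A)}}$ factors rather than letting them collapse to $I$. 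The paper, by contrast, gives a four-line argument with no unitaries at all: by Lemma \ref{l001} the left-hand side equals $w_A(\sum_{i=1}^{n}T_i)$, which by subadditivity is at most $\sum_{i=1}^{n}w_A(T_i)\le n\max_i w_A(T_i)$, and the maximum equals $w_{\mathbb{A}}(\operatorname{diag}(T_1,\dots,T_n))$ again by Lemma \ref{l001}. The trade-off: the paper's route exposes that the statement is essentially just subadditivity of $w_A$ dressed up in block-diagonal form, while your averaging scheme is heavier but is the genuinely structural argument -- it is the same pinching machinery that powers the preceding theorem, and it would survive in settings where one wants to compare a general operator matrix with its circulant average rather than merely a diagonal one.
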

\begin{proof}
Here,
\begin{align*}
   w_{\mathbb{A}}(diag(\sum_{i=1}^n T_i,\ldots, \sum_{i=1}^n T_i))&=w_A(\sum_{i=1}^n T_i)~~\mbox{by Lemma \ref{l001}}\\
   & \leq\sum_{i=1}^n w_A (T_i)\\
   &\leq n \max\{w_A(T_i): 1\leq i\leq n\}\\
   &=nw_\mathbb{A}(diag(T_1,\ldots,T_n)).
\end{align*}
\end{proof}

This paper ends with the note that further work on $A$-numerical radius for $n\times n $ operator matrices can be studied. \\

\vspace{.2cm}
\noindent
{\small {\bf Acknowledgments.}\\
We  thank the {\bf Government of India}  for introducing the {\it work from home initiative} during the COVID-19 pandemic.
}
\newpage

\bibliographystyle{amsplain}

\end{document}